\newtheorem{theorem}{Theorem}[section]
\newtheorem{lemma}[theorem]{Lemma}
\newtheorem{corollary}[theorem]{Corollary}
\theoremstyle{definition}
\newtheorem{definition}[theorem]{Definition}
\newtheorem{convention}[theorem]{Convention}
\theoremstyle{remark}
\numberwithin{equation}{section}
\def\Id{\operatorname{I}}
\def\loc{\operatorname{loc}}
\def\esup{\operatornamewithlimits{ess\,sup}}
\def\einf{\operatornamewithlimits{ess\,inf}}
\def\N{\mathbb N}
\def\R{\mathbb R}
\def\ap{\approx}
\def\mf{\mathfrak M}
\def\rad{\operatorname{rad}}
\def\qq{\qquad}
\def\rn{\R^n}
\def\a{\alpha}
\def\up{\upsilon}
\def\la{\lambda}
\def\La{\Lambda}
\def\vp{\varphi}
\def\i{\infty}
\def\I{(0,\i)}
\def\rw{\rightarrow}
\def\up{\uparrow}
\def\dn{\downarrow}
\def\ls{\lesssim}
\def\gs{\gtrsim}
\def\R{\mathbb R}
\def\M{\mathfrak M}
\def\mp{{\mathfrak M}}
\def\W{{\mathcal W}}
\begin{document}

\setcounter{page}{1}

\title[Generalized fractional maximal functions in Lorentz spaces]{Generalized fractional maximal functions in Lorentz spaces}

\author[R.Ch.Mustafayev]{Rza Mustafayev}
\address{Department of Mathematics, Faculty of Science and Arts, Kirikkale,University, 71450 Yahsihan, Kirikkale, Turkey}
\email{rzamustafayev@gmail.com}

\author[N. B\.{I}LG\.{I}\c{C}L\.{I}]{NEV\.{I}N B\.{I}LG\.{I}\c{C}L\.{I}}
\address{Department of Mathematics, Faculty of Science and Arts, Kirikkale
University, 71450 Yahsihan, Kirikkale, Turkey}
\email{nevinbilgicli@gmail.com}

\subjclass[2010]{42B25, 42B35}

\keywords{maximal functions, classical and weak-type Lorentz spaces,
iterated Hardy inequalities involving suprema, weights}

\begin{abstract}
In this paper  we give the complete characterization of the
boundedness of the generalized fractional maximal operator
\begin{equation*}
    M_{\phi,\Lambda^{\alpha}(b)}f(x) : = \sup_{Q \ni x} \frac{\|f \chi_Q\|_{\Lambda^{\alpha}(b)}}{\phi (|Q|)} \qquad (x \in \rn),
\end{equation*}
between the classical Lorentz spaces $\Lambda^p (v)$ and
$\Lambda^q(w)$ for appropriate functions $\phi$, where $0 < p,\,q <
\infty$, $0 < \alpha \le r < \infty$, $v,w,\,b$ are weight functions
on $(0,\infty)$  such that $0 < B(x): = \int_0^x b < \infty$, $x >
0$, $B \in \Delta_2$ and $B(t) / t^{\alpha / r}$ is
quasi-increasing.
\end{abstract}

\maketitle

%%%%%%%%%%%%%%%%%%%%%%%%%%%%%%%%%%%%%%%%%%%%%%%%%%%%%%%%%%%%%%%%%%%%%%%%%%%%%%%%%%%%%
%%%%%%%%%%%%%%%%%%%%%%%%%%%%%%%%%%%%%%%%%%%%%%%%%%%%%%%%%%%%%%%%%%%%%%%%%%%%%%%%%%%%%%

\section{Introduction}\label{in}

Throughout the paper, we always denote by  $c$ or $C$ a positive
constant, which is independent of main parameters but it may vary
from line to line. However a constant with subscript such as $c_1$
does not change in different occurrences. By $a\lesssim b$,
 we mean that $a\leq \la b$, where $\la >0$ depends on
inessential parameters. If $a\lesssim b$ and $b\lesssim a$, we write
$a\approx b$ and say that $a$ and $b$ are  equivalent. Unless a
special remark is made, the differential element $dx$ is omitted
when the integrals under consideration are the Lebesgue integrals.
By a cube, we mean an open cube with sides parallel to the
coordinate axes.

Given two quasi-normed vector spaces $X$ and $Y$, we write $X=Y$ if
$X$ and $Y$ are equal in the algebraic and the topological sense
(their quasi-norms are equivalent). The symbol $X\hookrightarrow Y$
($Y \hookleftarrow X$) means that $X\subset Y$ and the natural
embedding $\Id$ of $X$ in $Y$ is continuous, that is, there exist a
constant $c > 0$ such that $\|z\|_Y \le c\|z\|_X$ for all $z\in X$.
The best constant of the embedding $X\hookrightarrow Y$ is
$\|\Id\|_{X \rw Y}$.

Let $\Omega$ be any measurable subset of $\rn$, $n\geq 1$. Let
$\mf(\Omega)$ denote the set of all measurable functions on $\Omega$
and $\mf_0 (\Omega)$ the class of functions in $\mf (\Omega)$ that
are finite a.e. The symbol $\mp^+ (\Omega)$ stands for the
collection of all $f\in\mp (\Omega)$ which are non-negative on
$\Omega$. The symbol $\mp^+ ((0,\infty);\dn)$ is used to denote the subset of
those functions from $\mf^+ (0,\infty)$ which are non-increasing on
$(0,\infty)$. Denote by $\mf^{\rad,\dn} = \mf^{\rad,\dn}(\rn)$ the set of all
measurable, non-negative, radially decreasing functions on $\rn$,
that is,
$$
\mf^{\rad,\dn} : = \{f \in \mf(\rn):\, f(x) = h(|x|),\,x \in \rn
~\mbox{with}~ h \in \mp^+ ((0,\infty);\dn)\}.
$$
The family of all weight functions (also called just
weights) on $\Omega$, that is, locally integrable non-negative
functions on $\Omega$, is given by $\W(\Omega)$. Everywhere in the paper, $u$, $v$ and $w$ are
weights.

For $p\in (0,\i]$ and $w\in \mp^+(\Omega)$, we define the functional
$\|\cdot\|_{p,w,\Omega}$ on $\mp (\Omega)$ by
\begin{equation*}
\|f\|_{p,w,\Omega} : = \left\{\begin{array}{cl}
\left(\int_{\Omega} |f(x)|^p w(x)\,dx \right)^{1/p} & \qq\mbox{if}\qq p<\i, \\
\esup_{\Omega} |f(x)|w(x) & \qq\mbox{if}\qq p=\i.
\end{array}
\right.
\end{equation*}

If, in addition, $w\in \W(\Omega)$, then the weighted Lebesgue space
$L^p(w,\Omega)$ is given by
\begin{equation*}
L^p(w,\Omega) = \{f\in \mp (\Omega):\,\, \|f\|_{p,w,\Omega} <
\infty\}
\end{equation*}
and it is equipped with the quasi-norm $\|\cdot\|_{p,w,\Omega}$.

When $w\equiv 1$ on $\Omega$, we write simply $L^p(\Omega)$ and
$\|\cdot\|_{p,\Omega}$ instead of $L^p(w,\Omega)$ and
$\|\cdot\|_{p,w,\Omega}$, respectively.

Suppose $f$ is a measurable a.e. finite function on ${\mathbb R}^n$.
Then its non-increasing rearrangement $f^*$ is given by
$$
f^* (t) = \inf \{\lambda > 0: |\{x \in {\mathbb R}^n:\, |f(x)| >
\lambda \}| \le t\}, \quad t \in (0,\infty),
$$
and let $f^{**}$ denotes the Hardy-Littlewood maximal function of
$f$, i.e.
$$
f^{**}(t) : = \frac{1}{t} \int_0^t f^* (\tau)\,d\tau, \quad t > 0.
$$
Quite many familiar function spaces can be defined using the
non-increasing rearrangement of a function. One of the most
important classes of such spaces are the so-called classical Lorentz
spaces.

Let $p \in (0,\infty)$ and $w \in {\mathcal W}(0,\infty)$. Then the classical
Lorentz spaces $\Lambda^p (w)$ and $\Gamma^p (w)$ consist of all
functions $f \in {\mathfrak M}(\rn)$ for which $\|f\|_{\Lambda^p(w)} : = \|f^*\|_{p,w,(0,\infty)}
< \infty$ and $\|f\|_{\Gamma^p(w)} : = \|f^{**}\|_{p,w,(0,\infty)} < \infty$, respectively.
%Here it is
%$$
%\|f\|_{\Lambda^p(w)} : = \|f^*\|_{p,w,(0,\infty)} \qquad \mbox{and}
%\qquad \|f\|_{\Gamma^p(w)} : = \|f^{**}\|_{p,w,(0,\infty)}.
%$$
For more information about the Lorentz $\Lambda$ and $\Gamma$ see
e.g. \cite{cpss} and the references therein.

A weak-type modification of the space $\Lambda^p(w)$ is defined by
(cf. \cites{carsorJFA,sor})
$$
\Lambda^{p,\infty}(w) : = \left\{ f \in {\mathfrak M}(\rn):
\|f\|_{\Lambda^{p,\infty}(w)} : = \sup_{t>0} f^*(t) \bigg(\int_0^t
w(\tau)\,d\tau\bigg)^{1/p} < \infty \right\}.
$$
One can easily see that $\Lambda^p(w) \hookrightarrow
\Lambda^{p,\infty}(w)$. Recall that classical and weak-type Lorentz spaces include many
familiar spaces (see, for instance, \cite{edop}).

The study of maximal operators is one of the most important topics
in harmonic analysis. These significant non-linear operators, whose
behavior are very informative in particular in differentiation
theory, provided the understanding and the inspiration for the
development of the general class of singular and potential operators
(see, for instance, \cites{stein1970,guz1975,GR,tor1986,
stein1993,graf2008,graf}).

The main example is the Hardy-Littlewood maximal function which is
defined  for locally integrable functions $f$ on $\rn$ by
$$
Mf(x) : = \sup_{Q \ni x}\frac{1}{|Q|} \int_Q |f(y)|\,dy = \frac{\|f
    \chi_Q\|_{1,\rn}}{\|\chi_Q\|_{1,\rn}}, ~~ x \in \rn,
$$
where the supremum is taken over all cubes $Q$ containing $x$.

On using the Herz and the Stein rearrangement inequalities
\begin{equation}\label{Herz-Stein}
c f^{**}(t) \le (Mf)^* (t) \le C f^{**}(t), ~ t \in (0,\infty),
\end{equation}
where $c$ and $C$ are positive constants depending only on $n$ (cf.
\cite[Chapter 3, Theorem 3.8]{benshap1988}), it is clear that  in
order to describe mapping properties of the Hardy-Littlewood maximal
operator between the classical Lorentz space $\La^p(v)$ and
$\La^q(w)$, one has to characterize the weights $v,\,w$ for which
the inequality
\begin{equation}\label{HardyIneqRestr*}
\bigg(  \int_0^{\infty} \bigg(  \int_0^t f^* (\tau)\,d\tau\bigg)^q
w(t)\,dt\bigg)^{1/q} \ls \bigg( \int_0^{\infty} f^*(t)^p
v(t)\,dt\bigg)^{1/p}
\end{equation}
holds. The first results on the problem $\Lambda^p(v)
\hookrightarrow \Gamma^p(v)$, $1 < p < \infty$, which is equivalent
to inequality \eqref{HardyIneqRestr*}, were obtained by Boyd
\cite{boyd} and in an explicit form by Ari{\~n}o and Muckenhoupt
\cite{arinomuck}. The problem with $w \neq v$ and $p\neq q$, $1 <
p,\,q < \infty$ was first successfully solved by Sawyer
\cite{sawyer1990}. Many articles on this topic followed, providing
the results for a wider range of parameters. In particular, much
attention was paid to inequality \eqref{HardyIneqRestr*}; see for
instance
\cites{arinomuck,sawyer1990,steptrans,step1993,carsor1993,heinstep1993,ss,Sinn,popo,gogpick2000,bengros,gogpick2007,gjop,cgmp2008,
gogstepdokl2012_1,gogstepdokl2012_2,GogStep1,GogStep,gold2011.1,gold2011.2,johstepush,LaiShanzhong,gold2001},
survey \cite{cpss}, the monographs \cites{kufpers, kufmalpers}, for
the latest development of this subject see
\cites{GogStep,GogMusIHI}, and references given there.

The fractional maximal operator, $M_{\gamma}$, $\gamma \in (0,n)$,
is defined at $f \in L_{\loc}^1(\rn)$ by
$$
(M_{\gamma} f) (x) := \sup_{Q \ni x} |Q|^{ \gamma / n - 1} \int_{Q}
|f(y)|\,dy,\quad x \in \rn.
$$
It was shown in \cite[Theorem 1.1]{ckop} that
\begin{equation}\label{frac.max op.eq.1.}
(M_{\gamma}f)^* (t) \ls \sup_{\tau > t} \tau^{\gamma / n - 1}
\int_0^{\tau} f^*(y)\,dy \ls  (M_{\gamma} \tilde{f})^* (t)
\end{equation}
for every $f \in L_{\loc}^1(\rn)$ and $t \in \I$, where $\tilde{f}
(x) : = f^* (\omega_n |x|^n)$ and $\omega_n$ is the volume of
$S^{n-1}$. Thus, in order to characterize boundedness of the
fractional maximal operator $M_{\gamma}$ between classical Lorentz
spaces $\La^p(v)$ and $\La^q (w)$ it is necessary and sufficient to
characterize the validity of the weighted inequality
\begin{equation}\label{eq.888}
\bigg( \int_0^{\infty} \bigg[\sup_{\tau > t} \tau^{\gamma / n - 1}
\int_0^{\tau} \phi(y)\,dy\bigg]^q w(t)\,dt\bigg)^{1 / q} \ls \bigg(
\int_0^{\infty} [\phi(t)]^p v(t)\,dt\bigg)^{1 / p}
\end{equation}
for all $\phi \in \mp^+ ((0,\infty);\dn)$. 
Such a characterization was obtained in \cite{ckop} for the
particular case when $1 < p \le q <\infty$ and in \cite[Theorem
2.10]{o} in the case of more general operators and for extended
range of $p$ and $q$.

Let  $s \in (0,\infty)$, $\gamma \in [0,n)$ and $\mathbb A =
(A_0,A_{\infty}) \in {\mathbb R}^2$. Denote by
$$
\ell^{\mathbb A} (t) : = (1 + |\log t|)^{A_0} \chi_{[0,1]}(t) + (1 +
|\log t|)^{A_{\infty}} \chi_{[1,\infty)}(t),  \quad (t >0).
$$
Recall that the fractional maximal operator $M_{s,\gamma,\mathbb A}$
at $f \in \mp (\rn)$ defined in \cite{edop} by
$$
(M_{s,\gamma,\mathbb A} f) (x) : = \sup_{Q \ni x}
\frac{\|f\chi_Q\|_s}{ \|\chi_Q\|_{sn / (n-\gamma),{\mathbb A}}},
\quad x \in \rn
$$
satisfies the following equivalency
$$
(M_{s,\gamma,\mathbb A} f) (x) \ap \sup_{Q \ni x}
\frac{\|f\chi_Q\|_s}{ |Q|^{(n-\gamma)/(sn)}\ell^{\mathbb A}(|Q|)},
\quad x \in \rn.
$$
Hence, if $s = 1$, $\gamma = 0$ and ${\mathbb A} = (0,0)$, then
$M_{s,\gamma,\mathbb A}$ is equivalent to the classical
Hardy-Littlewood maximal operator $M$. If $s = 1$, $\gamma \in
(0,n)$ and ${\mathbb A} = (0,0)$, then $M_{s,\gamma,\mathbb A}$ is
equivalent to the usual fractional maximal operator $M_{\gamma}$.
Moreover, if $s = 1$, $\gamma \in [0,n)$ and ${\mathbb A} \in \R^2$,
then $M_{s,\gamma,\mathbb A} $ is the fractional maximal operator
which corresponds to potentials with logarithmic smoothness treated
in \cites{OT1,OT2}. In particular, if $\gamma = 0$, then
$M_{1,\gamma,\mathbb A}$ is the maximal operator of purely
logarithmic order.

It was shown in \cite[Theorem 3.1]{edop} that if $s \in (0,\infty)$,
$\gamma \in [0,n)$ and ${\mathbb A} = (A_0,A_{\infty}) \in \R^2$
satisfy either $\gamma \in (0,n)$, or $\gamma = 0$ and $A_0 \ge 0
\ge A_{\infty}$, then there exists a constant $C > 0$ depending only
in $n,\,s,\,\gamma$ and $\mathbb A$ such that for all $f \in \mp
(\rn)$ and every $t \in (0,\infty)$
\begin{equation}\label{frac.max op.eq.4.}
(M_{s,\gamma,\mathbb A} f)^* (t) \le C  \bigg[\sup_{t \le \tau <
\infty} \tau^{\gamma / n - 1} \ell^{-s {\mathbb A}}(\tau)
\int_0^{\tau} (f^*)^s(y)\,dy \bigg]^{1/s}.
\end{equation}
Inequality \eqref{frac.max op.eq.4.} is sharp in the sense that for
every $\vp \in \mp^+ ((0,\infty);\dn)$ there exists a function $f
\in \mp (\rn)$ such that $f^* = \vp$ a.e. on $(0,\infty)$ and for
all $t \in (0,\infty)$,
\begin{equation*}\label{frac.max op.eq.5.}
(M_{s,\gamma,\mathbb A} f)^* (t) \ge c  \bigg[\sup_{\tau > t}
\tau^{\gamma / n - 1} \ell^{-s {\mathbb A}}(\tau) \int_0^{\tau}
(f^*)^s(y)\,dy \bigg]^{1/s},
\end{equation*}
where $c$ is a positive constant with again depends only on
$n,\,s,\,\gamma$ and $\mathbb A$. Consequently, the operator
$M_{s,\gamma,\mathbb A}: \Lambda^p (v) \rw \Lambda^q(w)$ is bounded
if and only if the inequality
\begin{equation}\label{eq.opic}
\bigg( \int_0^{\infty} \bigg[\sup_{\tau > t} \tau^{\gamma / n - 1}
\ell^{-s {\mathbb A}}(\tau) \int_0^{\tau} \vp (y)\,dy
\bigg]^{q/s}w(t)\,dt\bigg)^{s / q} \ls \bigg(  \int_0^{\infty}
\vp^{p/s}(t) v(t)\,dt\bigg)^{s/p}
\end{equation}
holds for all $\phi \in \mp^+ ((0,\infty);\dn)$. The complete
characterization of inequality \eqref{eq.opic} was given in \cite[p.
17 and p. 34]{edop}. Full proofs and some further extensions and
applications can be found in \cite{edop}, \cite{edop2008}.

Given $p$ and $q$, $0 < p,\,q < \infty$, let $M_{p,q}$ denote the
maximal operator associated to the Lorentz $L^{p,q}$ spaces defined
by
$$
M_{p,q} f (x) : = \sup_{Q \ni x} \frac{\|f
\chi_Q\|_{p,q}}{\|\chi_Q\|_{p,q}} = \sup_{Q \ni x} \frac{\|f
\chi_Q\|_{p,q}}{|Q|^{1 / p}},
$$
where $\|\cdot\|_{p,q}$ is the usual Lorentz norm
$$
\|f\|_{p,q} : = \bigg( \int_0^{\infty} \big[ \tau^{1 / p} f^*
(\tau)\big]^q \frac{d\tau}{\tau}\bigg)^{1 / q}.
$$
This operator was introduced by Stein in \cite{stein1981} in order
to obtain certain endpoint results in differentiation theory. The
operator $M_{p,q}$ have been also considered by other authors, for
instance see  \cite{neug1987,leckneug,basmilruiz,perez1995,ler2005}.

It was proved in \cite{basmilruiz}, with the help of interpolation,
that if $1 \le q \le p$, then
\begin{equation}\label{eq.basmilruiz}
(M_{p,q}f)^* (t) \ls \frac{1}{t^{1/p}} \bigg( \int_0^t f^*(\tau)^q \tau^{q
/ p - 1}\,d\tau\bigg)^{1 / q}.
\end{equation}
This result was extended to more general setting of maximal
operators in \cite{mastper}. Consequently, if one knows the
characterization of the weights $u,\,v,\,w$ for which the inequality
\begin{equation}\label{HardyIneqRestr}
\bigg(  \int_0^{\infty} \bigg(  \int_0^t f^*
(\tau)u(\tau)\,d\tau\bigg)^{\beta} w(t)\,dt\bigg)^{1/\beta} \ls \bigg(
\int_0^{\infty} f^*(t)^{\alpha} v(t)\,dt\bigg)^{1/ \alpha}
\end{equation}
holds, then it is possible to describe mapping properties of
$M_{p,q}$ between the classical Lorentz spaces $\La^{\alpha}(v)$ and
$\La^{\beta}(w)$ when $1 \le q \le p$.

Let $u \in \W\I \cap C\I$, $b \in \W\I$ and $B(t) : = \int_0^t b(s)\,ds$. Assume that $b$
is such that $0 < B(t) < \infty$ for every $t \in \I$. The iterated Hardy-type operator involving suprema $T_{u,b}$ is defined at $g \in \M^+ \I$ by
$$
(T_{u,b} g)(t) : = \sup_{t \le \tau < \infty} \frac{u(\tau)}{B(\tau)} \int_0^{\tau} g(y)b(y)\,dy,\qquad t \in \I.
$$

It is easy to see that the left-hand sides of inequalities
\eqref{HardyIneqRestr*}, \eqref{eq.888}, \eqref{eq.opic} and
\eqref{HardyIneqRestr} can be interpreted as a particular examples
of operators $T_{u,b}$.

Such operators have been found indispensable in the search for
optimal pairs of rearrangement-invariant norms for which a
Sobolev-type inequality holds (cf. \cite{kerp}). They constitute a
very useful tool for characterization of the associate norm of an
operator-induced norm, which naturally appears as an optimal domain
norm in a Sobolev embedding (cf. \cite{pick2000}, \cite{pick2002}).
Supremum operators are also very useful in limiting interpolation
theory as can be seen from their appearance for example in
\cite{evop}, \cite{dok}, \cite{cwikpys}, \cite{pys}.

In \cite{GogMusISI}, complete characterization for the inequality
\begin{equation}\label{Tub.thm.1.eq.1}
\|T_{u,b}f \|_{q,w,\I} \le c \| f \|_{p,v,\I}, \qq f \in
\M^{\dn}(0,\i)
\end{equation}
for $0 < q < \infty$, $0 < p < \infty$ is given (see Theorem
\ref{Tub.thm.1}).

Inequality \eqref{Tub.thm.1.eq.1} was characterized in \cite[Theorem
3.5]{gop} under additional condition
$$
\sup_{0 < t < \infty} \frac{u(t)}{B(t)} \int_0^t
\frac{b(\tau)}{u(\tau)}\,d\tau < \infty.
$$
Note that the case when $0 < p \le 1 < q < \infty$ was not
considered in \cite{gop}. It is also worth to mention that in the
case when $1 < p < \infty$, $0 < q < p < \infty$, $q \neq 1$
\cite[Theorem 3.5]{gop} contains only discrete condition. In
\cite{gogpick2007} the new reduction theorem was obtained when $0 <
p \le 1$, and this technique allowed to characterize inequality
\eqref{Tub.thm.1.eq.1} when $b \equiv 1$, and in the case when $0 <
q< p \le 1$, \cite{gogpick2007} contains only discrete condition.

In this paper we define the following generalized fractional maximal
operator $M_{\phi,\Lambda^{\alpha}(b)}$ and characterize the
boundedness of this operator between classical and weak-type Lorentz
spaces by reducing the problem to the boundedness of the operator $T_{u,b}$ in
weighted Lebesgue spaces on the cone of non-negative non-increasing functions:

Let $0 < \alpha < \infty$, $b \in \W$ and $\phi: (0,\infty)
\rightarrow (0,\infty)$. Denote by
    \begin{equation}
    M_{\phi,\Lambda^{\alpha}(b)}f(x) : = \sup_{Q \ni x} \frac{\|f \chi_Q\|_{\Lambda^{\alpha}(b)}}{\phi (|Q|)} \qquad (x \in \rn).
    \end{equation}

Note that $M_{\phi,\Lambda^{\alpha}(b)} = M_{\gamma}$, where
$M_{\gamma}$ is  the fractional maximal operator, when $\alpha = 1$,
$b \equiv 1$ and $\phi (t) = t^{1 - \gamma / n}$ $(t >0)$ with $0 <
\gamma < n$. Moreover, $M_{\phi,\Lambda^{\alpha}(b)} \ap
M_{s,\gamma,\mathbb A}$, when $\alpha = s$, $b \equiv 1$ and $\phi
(t) = t^{(n - \gamma) / (sn)} \ell^{\mathbb A} (t)$, $(t >0)$ with
$0 < \gamma < n$ and $\mathbb A = (A_0,A_{\infty}) \in {\mathbb
R}^2$. It is worth also to mention that
$M_{\phi,\Lambda^{\alpha}(b)} = M_{p,q}$, when $\alpha = q$, $b(t) =
t^{q/ p - 1}$ and $\phi (t) = t^{1 / p}$ $(t >0)$.

The paper is organized as follows. Section \ref{pre} contains some
preliminaries along with the standard ingredients used in the
proofs. The main results are stated and proved in Section \ref{s.3}.

% % % % % % % % % % % % % % % % % % % % % % % % % % % % % % % % % % % % % % % % % %

\section{Notations and Preliminaries}\label{pre}

Let $F$ be any non-negative set function defined on the collection
of all sets of positive finite measure. Define its maximal function
by
$$
MF (x) : = \sup_{Q \ni x} F(Q),
$$
where the supremum is taken over all cubes containing $x$.

\begin{definition}\cite[Definition 1]{ler2005}
    We say that a set function $F$ is pseudo-increasing if there is a positive constant $C > 0$ such that for any finite collection of pairwise disjoint cubes $\{Q_j\}$, we have
    \begin{equation}\label{eq.pseudo-inc}
    \min_i F(Q_i) \le C F \bigg( \bigcup_i Q_i\bigg).
    \end{equation}
\end{definition}

\begin{theorem}\cite[Theorem 1]{ler2005}\label{ler2005}
    Let $F$ be a pseudo-increasing set function. Then, for any $t > 0$,
    \begin{equation}\label{eq.ler-ineq}
    (MF)^* (t) \le C \sup_{|E| > t / 3^n} F(E),
    \end{equation}
    where $C$ is the constant appearing in \eqref{eq.pseudo-inc}, and the supremum is taken over all sets $E$ of finite measure $|E| > t / 3^n$.
\end{theorem}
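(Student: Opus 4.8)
The plan is to estimate, for each fixed level $s>0$, the measure of the superlevel set $\Omega_s:=\{x\in\rn:\,MF(x)>s\}$ in terms of the quantity $\lambda:=\sup_{|E|>t/3^n}F(E)$ (where the supremum runs over measurable $E$ with $t/3^n<|E|<\infty$), and then to conclude via the identity $(MF)^*(t)=\inf\{s:\,|\Omega_s|\le t\}$. Since the inequality is trivial when $\lambda=\infty$, I may assume $\lambda<\infty$; it then suffices to show that $|\Omega_s|\le t$ whenever $s\ge C\lambda$, with $C$ the constant from \eqref{eq.pseudo-inc}.

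First I would fix $s>0$ and an arbitrary compact set $K\subset\Omega_s$. By definition of $MF$, to each $x\in K$ there corresponds an (open, hence positive-measure) cube $Q_x\ni x$ with $F(Q_x)>s$; using compactness I pass to a finite subcover, and then the finite form of the Vitali covering lemma extracts pairwise disjoint cubes $Q_1,\dots,Q_N$ from this family with $K\subset\bigcup_{j=1}^N 3Q_j$. Consequently
\begin{equation*}
|K|\le\sum_{j=1}^N|3Q_j|=3^n\sum_{j=1}^N|Q_j|=3^n\Big|\bigcup_{j=1}^N Q_j\Big|.
\end{equation*}

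Next I would bring in the pseudo-increasing property, applied to the disjoint family $\{Q_j\}_{j=1}^N$: with $E:=\bigcup_{j=1}^N Q_j$ (a finite union of bounded cubes, so $0<|E|<\infty$) we obtain $s<\min_j F(Q_j)\le C\,F(E)$, i.e. $F(E)>s/C$, while the previous display yields $|E|\ge|K|/3^n$. Hence, if $|K|>t$, then $|E|>t/3^n$, so $s/C<F(E)\le\lambda$, that is $s<C\lambda$. Taking the contrapositive: when $s\ge C\lambda$, every compact $K\subset\Omega_s$ has $|K|\le t$, and by inner regularity of Lebesgue measure $|\Omega_s|\le t$. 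Therefore $(MF)^*(t)\le C\lambda$, which is \eqref{eq.ler-ineq}.

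Structurally this is exactly the classical weak-type argument for the Hardy–Littlewood maximal function with the average $\frac1{|Q|}\int_Q|f|$ replaced by the abstract value $F(Q)$; the only place the nature of $F$ is used is the passage $\min_j F(Q_j)\le C\,F(\bigcup_j Q_j)$. So there is no genuinely hard step — the point to be careful about is the bookkeeping between the Vitali dilation constant $3^n$, which is precisely what forces the threshold $t/3^n$ on the test sets $E$, and the pseudo-increasing constant $C$, which reappears unchanged in the conclusion. A minor technical issue is the measurability of $\Omega_s$ (equivalently, lower semicontinuity of $MF$); this is sidestepped by arguing throughout with compact subsets of $\Omega_s$ and invoking inner regularity, as above.
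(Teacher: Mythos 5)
Your argument is correct: the Vitali selection gives disjoint cubes $Q_1,\dots,Q_N$ with $F(Q_j)>s$ and $|\bigcup_j Q_j|\ge |K|/3^n$, and the pseudo-increasing property then forces $s<C\,F(\bigcup_j Q_j)\le C\lambda$ whenever $|K|>t$, which is exactly what is needed; the openness of the cubes makes $\Omega_s$ open, so the inner-regularity step is harmless. The paper does not prove this statement but quotes it from Lerner, and your weak-type covering argument is essentially Lerner's original proof, so there is nothing to reconcile.
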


We will need the following elementary inequality \cite[p.
44]{benshap1988}
\begin{equation}\label{H-L.ineq}
\int_E |f(x)|\,dx \le \int_0^{|E|} f^* (\xi)\,d\xi.
\end{equation}

A function $\phi: (0,\infty) \rightarrow (0,\infty)$ is said to
satisfy the $\Delta_2$-condition, denoted $\phi \in \Delta_2$, if
for some $C > 0$
$$
\phi (2t) \le C \phi (t) \quad \mbox{for all} \quad t > 0.
$$

A function $\phi: (0,\infty) \rightarrow (0,\infty)$ is said to be
quasi-increasing (quasi-decreasing), if for some $C > 0$
$$
\phi (t_1) \le C \phi (t_2) \qquad (\phi (t_2) \le c \phi (t_1)),
$$
whenever $0 < t_1 \le t_2 < \infty$.

A function $\phi: (0,\infty) \rightarrow (0,\infty)$ is said to
satisfy the $Q_r$-condition, $0 < r < \infty$, denoted $\phi \in
Q_r$, if for some constant $C > 0$
$$
\phi \bigg(\sum_{i=1}^n t_i \bigg) \le C \bigg( \sum_{i=1}^n
\phi(t_i)^r \bigg)^{1/r},
$$
for every finite set of non-negative real numbers
$\{t_1,\ldots,t_n\}$.

It is clear that if $\omega \in Q_r$, $0 < r < \infty$ and $g:
(0,\infty) \rw (0,\infty)$ is a quasi-decreasing function, then
$\omega \cdot g \in Q_r$.

A quasi-Banach space $X$ is a complete metrizable real vector space
whose topology is given by a quasi-norm $\|\cdot\|$ satisfying the
following three conditions: $\|x\| > 0$, $x \in X$, $x \neq 0$;
$\|\lambda x \| = |\lambda| \|x\|$, $\lambda \in \R$, $x\in X$; and
$\|x_1 + x_2\| \le C (\|x_1\| + \|x_2\|)$, $x_1,\,x_2 \in X$, where
$C \ge 1$ is a constant independent of $x_1$ and $x_2$.

A quasi-Banach function space on a measure space $(\rn,dx)$ is
defined to be a quasi-Banach space $X$ which is a subspace of
$\mp_0(\rn)$ (the topological linear space of all equivalence
classes of the real Lebesgue measurable functions equipped with the
topology of convergence in measure) such that there exists $u \in X$
with $u
> 0$ a.e. and if $|f| \le |g|$ a.e., where $g \in X$ and $f \in
\mp_0(\rn)$, then $f \in X$ and $\|f\|_X \le \|g\|_X$.

A quasi-Banach function space $X$ is said to be order continuous if
for every $f \in X$ and every sequence $\{f_n\}$ such that $0 \le
f_n \le |f|$ and $f_n \downarrow 0$ a.e. it holds $\|f_n\|_X
\rightarrow 0$.

A quasi-Banach function space $X$ is said to satisfy a lower
$r$-estimate, $0 < r < \infty$, if there exists a constant $C$ such
that
$$
\bigg( \sum_{i=1}^n \|f_i \|_X^r\bigg)^{1/r} \le C \bigg\|
\sum_{i=1}^n f_i\bigg\|_X,
$$
for every finite set of functions $\{f_1,\ldots,f_n\} \subset X$
with pairwise disjoint supports (see \cite[1.f.4]{lintzaf}).

A quasi-Banach function space $(X,\|\cdot\|_X)$ of real-valued,
locally integrable, Lebesgue measurable functions on $\rn$ is said
to be a rearrangement-invariant (r.i.) space if it satisfies the
following conditions:

\begin{enumerate}
    \item

    If $g^* \le f^*$ and $f \in X$, then $g \in X$ with $\|g\|_X \le \|f\|_X$.

    \item
    If $A$ is a Lebesgue measurable set of finite measure, then $\chi_A \in X$.

    \item
    $0 \le f_n \up$, $\sup_{n\in\N} \|f_n\|_X \le M$, imply that $f = \sup_{n \in \N} f_n \in X$ and $\|f\|_X = \sup_{n\in \N} \|f_n\|_X$.
\end{enumerate}

For each r.i. space $X$ on $\rn$, a r.i. space $\bar{X}$ on
$(0,+\infty)$ is associated such that $f \in X$ if and only if $f^*
\in \bar{X}$ and $\|f\|_X = \|f^*\|_{\bar{X}}$ (see
\cite{benshap1988}).

Most of the properties of r.-i. spaces can be formulated in terms of
the fundamental function $\varphi_X$ of $X$ defined by
$$
\varphi_X(t) = \|\chi_E \|_X,
$$
where $|E| = t$. Observe that the particular choice of the set $E$
with $|E| = t$ is immaterial by the rearrangement-invariance of $X$.
The function $\varphi_X$ is quasi-concave and continuous, except
perhaps at the origin.

Let $X$ be a quasi-Banach function space on $\rn$. By $X_{\loc}$ we
denote the space of all $f \in \mp_0(\rn)$ such that $f \chi_Q \in
X$ for every cube $Q \subset \rn$.
\begin{definition}
    Suppose that $X$ is a quasi-Banach space of
    measurable functions defined on $\rn$. Given a function $\phi: (0,\infty) \rightarrow (0,\infty)$, denote for every $f \in X_{\loc}$ by
    \begin{equation}
    M_{\phi,X} f(x) : = \sup_{Q \ni x} \frac{\|f \chi_Q\|_X}{\phi (|Q|)} \qquad (x \in \rn).
    \end{equation}
\end{definition}

It is easy to see that $M_{\phi,X} f$ is a lower-semicontinuous
function.

Note that if $X$ is r.i. quasi-Banach function space on $\rn$, then
$$
M_{\vp_X,X} f(x) = \sup_{Q \ni x} \frac{\|f\chi_Q\|_X}{\|\chi_Q\|_X}
\qq (x \in \rn).
$$

It was shown in \cite[Theorem 1]{basmilruiz}, in particular, that if
$X$ is a r.i.quasi-Banach function space satisfying a lower
$\vp_X$-estimate, that is, if there exists $C>0$ such that for all
$m \in \N$ and $\{f_i\}_{i=1}^m \subset X$ with pairwise disjoint
supports we have
$$
\vp_X \bigg( \sum_{i=1}^n \vp_X^{-1} \big( \|f_i\|_X\big)\bigg) \le
C \big\| \sum_{i=1}^n f_i\big\|_X,
$$
then there exists $C > 0$ such that for all $f \in X$ the inequality
$$
\sup_{t > 0} \vp_X(t) (M_{\vp_X,X})^* (t) \le C \|f\|_X.
$$

It was proved in \cite[Theorem 3.5]{cieskam} that if $X$ is a r.i.
order continuous quasi-Banach function space satisfying a lower
$\vp_X$-estimate, then $X$ has the Lebesgue differentiation
property, that is,
$$
\lim_{r \rw 0} \frac{\|(f - f(x)) \chi_{B(x,r)}\|_X}{\|
\chi_{B(x,r)}\|_X} = 0
$$
for all $f \in X$ and for a.a. $x \in \rn$.

Denote by
$$
V(x) : = \int_0^x v(t)\,dt  ~ \mbox{and} ~  W(x) : = \int_0^x
w(t)\,dt ~ \mbox{for all} ~ x > 0.
$$

Suppose $0 < p < \infty$ and let $w$ be a weight on $(0,\infty)$
such that $W \in \Delta_2$ and $W(\infty) = \infty$. Then the
classical Lorentz space $\La^p(w)$ is a r.i. order-continuous
quasi-Banach function space (see, for instance, \cite[Section
2.2]{carrapsor} and \cite{kammal}).

The following statement was proved in \cite{kammal}.
\begin{theorem}\cite[Theorem 7]{kammal}\label{KamMal}
    Let $w$ be a weight function such that $W \in \Delta_2$. Given  $0 < p,\,r < \infty$, the following assertions are equivalent:

    {\rm (i)} $\La^p(w)$ satisfies a lower $r$-estimate.

    {\rm (ii)} $W(t) / t^{p/r}$ is quasi-increasing and $r \ge p$.
\end{theorem}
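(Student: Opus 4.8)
\emph{Proof strategy.} I would pass everything through the ``layer-cake'' identity
\begin{equation*}
\|f\|_{\Lambda^p(w)}^p=p\int_0^{\infty}\lambda^{p-1}W\big(\mu_f(\lambda)\big)\,d\lambda,\qquad \mu_f(\lambda):=|\{x\in\rn:\,|f(x)|>\lambda\}|,
\end{equation*}
which follows from $f^*(t)^p=p\int_0^{f^*(t)}\lambda^{p-1}\,d\lambda$, Fubini's theorem and the identity $|\{t:\,f^*(t)>\lambda\}|=\mu_f(\lambda)$. Functions with pairwise disjoint supports have pairwise disjoint level sets, so $\mu_{\sum_i f_i}=\sum_i\mu_{f_i}$, and therefore the lower $r$-estimate for $\Lambda^p(w)$ is exactly the inequality
\begin{equation*}
\bigg(\sum_i\Big(p\int_0^{\infty}\lambda^{p-1}W(\mu_{f_i}(\lambda))\,d\lambda\Big)^{r/p}\bigg)^{1/r}\le C\bigg(p\int_0^{\infty}\lambda^{p-1}W\Big(\sum_i\mu_{f_i}(\lambda)\Big)\,d\lambda\bigg)^{1/p},
\end{equation*}
with a uniform $C$, for all finite families of disjointly supported $f_i$; since every non-increasing function on $\I$ is some $\mu_{f_i}$ and $|\rn|=\infty$ lets the $f_i$ sit on disjoint sets, this is a statement purely about the function $W$.

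\emph{(ii) $\Rightarrow$ (i).} Given disjointly supported $f_i\in\Lambda^p(w)$, put $U:=\sum_i\mu_{f_i}$, $\nu_i:=\mu_{f_i}/U$ (so $0\le\nu_i\le1$ and $\sum_i\nu_i=1$ a.e.\ on $\{U>0\}$) and $d\sigma(\lambda):=\lambda^{p-1}W(U(\lambda))\,d\lambda$, so that $p\|\sigma\|=\|\sum_i f_i\|_{\Lambda^p(w)}^p$. As $\mu_{f_i}=\nu_iU\le U$, quasi-increasingness of $W(t)/t^{p/r}$ gives $W(\mu_{f_i}(\lambda))\le C\,\nu_i(\lambda)^{p/r}W(U(\lambda))$, hence $\|f_i\|_{\Lambda^p(w)}^p\le pC\int\nu_i^{p/r}\,d\sigma$. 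Since $r/p\ge1$, Hölder's inequality gives $\int\nu_i^{p/r}\,d\sigma\le\big(\int\nu_i\,d\sigma\big)^{p/r}\|\sigma\|^{1-p/r}$; raising to the power $r/p$, summing over $i$ and using $\sum_i\nu_i=1$, I obtain $\sum_i\|f_i\|_{\Lambda^p(w)}^r\le(pC)^{r/p}\|\sigma\|^{r/p}=C^{r/p}\|\sum_i f_i\|_{\Lambda^p(w)}^r$, i.e.\ the lower $r$-estimate (this direction uses neither $\Delta_2$ nor $W(\infty)=\infty$).

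\emph{(i) $\Rightarrow$ (ii).} Testing on $n$ disjoint copies of $\chi_A$, $|A|=t_1$, whose sum is $\chi_E$ with $|E|=nt_1$, gives $n^{p/r}W(t_1)\le C^pW(nt_1)$; for $t_1\le t_2$ I take $n=\lceil t_2/t_1\rceil$ (so $t_2\le nt_1\le2t_2$) and use $W\in\Delta_2$ to bound $W(nt_1)$ by a multiple of $W(t_2)$, which shows $W(t)/t^{p/r}$ is quasi-increasing. The remaining point---the one I expect to be the main obstacle---is the necessity of $r\ge p$: the family above is too coarse (for $W(t)=t^2$ it only yields $r\ge p/2$), so instead one must test on functions supported at \emph{geometrically separated amplitude levels}. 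Fix $a>0$, set $\delta_i:=2^{-i}$ and $M_i:=(a/W(\delta_i))^{1/p}$ for $i=1,\dots,N$ (well defined, non-decreasing, and $M_i\to\infty$, since $W\in\Delta_2$ forces $W>0$ on $\I$ and $W(0^+)=0$), and let $f_i:=M_i\chi_{A_i}$ with $A_i$ pairwise disjoint, $|A_i|=\delta_i$, so that $\|f_i\|_{\Lambda^p(w)}^p=M_i^pW(\delta_i)=a$ for every $i$. For each $\lambda$ the set $\{i\le N:M_i>\lambda\}$ equals $\{k(\lambda),\dots,N\}$ (empty if $\lambda\ge M_N$), so $\sum_{i:M_i>\lambda}\delta_i\le2\delta_{k(\lambda)}$; splitting $\|\sum_{i=1}^N f_i\|_{\Lambda^p(w)}^p=p\int_0^\infty\lambda^{p-1}W\big(\sum_{i:M_i>\lambda}\delta_i\big)\,d\lambda$ according to the value of $k(\lambda)$ and applying $W(2\delta_k)\le C_{\Delta_2}W(\delta_k)$ shows that each of the $N$ resulting pieces is at most $C_{\Delta_2}W(\delta_k)M_k^p=C_{\Delta_2}a$; hence $\|\sum_{i=1}^N f_i\|_{\Lambda^p(w)}^p\le c\,Na$ with $c=c(p,\Delta_2)$. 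Inserting this into the lower $r$-estimate yields $N^{1/r}a^{1/p}\le C(c\,Na)^{1/p}$ for all $N$, forcing $1/r\le1/p$, i.e.\ $r\ge p$. Combining the two parts gives (ii), and hence the equivalence.
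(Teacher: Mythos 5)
This theorem is not proved in the paper at all: it is imported verbatim from Kami\'nska--Maligranda \cite{kammal}*{Theorem 7}, so there is no in-paper argument to compare against. Your blind proof, however, is correct and self-contained. The layer-cake identity $\|f\|_{\Lambda^p(w)}^p=p\int_0^\infty\lambda^{p-1}W(\mu_f(\lambda))\,d\lambda$ together with the additivity $\mu_{\sum_i f_i}=\sum_i\mu_{f_i}$ for disjointly supported functions is exactly the right reduction; the H\"older step with exponent $r/p\ge1$ applied to $\nu_i^{p/r}$ against $d\sigma$ cleanly yields sufficiency, and you correctly observe that this direction needs neither $\Delta_2$ nor $W(\infty)=\infty$. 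On the necessity side, your diagnosis that equal-height test functions are too coarse is accurate, and the geometric construction $f_i=M_i\chi_{A_i}$ with $|A_i|=2^{-i}$ and $M_i^pW(2^{-i})=a$ is the standard device for forcing $r\ge p$; the estimate $\|\sum_{i\le N}f_i\|_{\Lambda^p(w)}^p\lesssim Na$ via the tail bound $\sum_{i\ge k}2^{-i}\le 2\cdot2^{-k}$ and $\Delta_2$ is sound, and this is the one place where $W\in\Delta_2$ is genuinely indispensable. Two small points you should make explicit: the argument tacitly excludes the degenerate case $W\equiv0$ (note that $W\in\Delta_2$ together with $W(t_0)=0$ for some $t_0>0$ forces $W\equiv0$, and elsewhere the paper assumes $0<W(x)<\infty$, so this is harmless), and in the sufficiency direction one should dispose of the case $U(\lambda)=\infty$ on a set of positive measure, where either the right-hand side is infinite or one is again in the degenerate situation. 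Neither affects the substance: the proof stands.
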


We adopt the following conventions:
\begin{convention}\label{Notat.and.prelim.conv.1.1}
    {\rm (i)} Throughout the paper we put $0 \cdot \i = 0$, $\i / \i =
    0$ and $0/0 = 0$.

    {\rm (ii)} If $p\in [1,+\i]$, we define $p'$ by $1/p + 1/p' = 1$.

    {\rm (iii)} If $0 < q < p < \infty$, we define $r$ by $1 / r  = 1 / q - 1 / p$.
\end{convention}

Finally, for the convenience of the reader, we recall the
above-mentioned characterization of the inequality
\eqref{Tub.thm.1.eq.1}, when $0 < p,\,q < \infty$.

\begin{theorem}\cite[Theorems 5.1 and 5.5]{GogMusISI}\label{Tub.thm.1}
    Let $0 < p,\,q < \infty$ and let $u \in \W\I \cap C\I$. Assume that $b,\,v,\,w \in \W\I$
    is such that $0 < B(t) < \infty$, $0 < V(x) < \i$ and $0 < W(x) < \infty$ for all $x > 0$.
    Then inequality \eqref{Tub.thm.1.eq.1}
    is satisfied with the best constant $c$ if and only if:

    {\rm (i)} $1 < p \le q$, and in this case $c \ap A_1 + A_2$, where
    \begin{align*}
    A_1: & = \sup_{x > 0}\bigg( \bigg[\sup_{x \le \tau < \infty} \frac{u(\tau)}{B(\tau)}\bigg]^q  W(x) + \int_x^{\infty}  \bigg[\sup_{t \le \tau < \infty} \frac{u(\tau)}{B(\tau)}\bigg]^q w(t)\,dt\bigg)^{1 / q}\bigg(\int_0^x \bigg(\frac{B(y)}{V(y)}\bigg)^{p'}v(y)\,dy\bigg)^{1 / p'}, \\
    A_2: &  = \sup_{x > 0}\bigg( \bigg[\sup_{x \le \tau < \infty} \frac{u(\tau)}{V^2(\tau)}\bigg]^q  W(x) + \int_x^{\infty}  \bigg[\sup_{t \le \tau < \infty} \frac{u(\tau)}{V^2(\tau)}\bigg]^q w(t)\,dt\bigg)^{1 / q}\bigg(\int_0^x V^{p'}(y)v(y)\,dy\bigg)^{1 / p'};
    \end{align*}

    {\rm (ii)} $1 = p \le q$, and in this case $c \ap B_1 + B_2$, where
    \begin{align*}
    B_1: & = \sup_{x > 0}\bigg( \bigg[\sup_{x \le \tau < \infty} \frac{u(\tau)}{B(\tau)}\bigg]^q  W(x) + \int_x^{\infty}  \bigg[\sup_{t \le \tau < \infty} \frac{u(\tau)}{B(\tau)}\bigg]^q w(t)\,dt\bigg)^{1 / q}\bigg(\sup_{0 < y \le x} \frac{B(y)}{V(y)}\bigg), \\
    B_2: &  = \sup_{x > 0}\bigg( \bigg[\sup_{x \le \tau < \infty} \frac{u(\tau)}{V^2(\tau)}\bigg]^q  W(x) + \int_x^{\infty}  \bigg[\sup_{t \le \tau < \infty} \frac{u(\tau)}{V^2(\tau)}\bigg]^q w(t)\,dt\bigg)^{1 / q}V(x);
    \end{align*}

    {\rm (iii)} $1 < p$ and $q < p$,  and in this case $c \ap C_1 + C_2 + C_3 + C_4$, where
    \begin{align*}
    C_1: & = \bigg(\int_0^{\infty} \bigg(\int_x^{\infty}  \bigg[\sup_{t \le \tau < \infty} \frac{u(\tau)}{B(\tau)}\bigg]^q  w(t)\,dt\bigg)^{r / p} \bigg[\sup_{x \le \tau < \infty} \frac{u(\tau)}{B(\tau)}\bigg]^q \bigg(\int_0^x \bigg(\frac{B(y)}{V(y)}\bigg)^{p'}v(y)\,dy\bigg)^{r / p'} w(x)\,dx \bigg)^{1/r}, \\
    C_2: & = \bigg(\int_0^{\infty} W^{r / p}(x) \bigg[\sup_{x \le \tau < \infty} \bigg[\sup_{\tau \le y < \infty} \frac{u(y)}{B(y)}\bigg] \bigg(\int_0^x \bigg(\frac{B(y)}{V(y)}\bigg)^{p'}v(y)\,dy\bigg)^{1 / p'} \bigg]^r  w(x)\,dx \bigg)^{1/r},\\
    C_3: & = \bigg(\int_0^{\infty} \bigg(\int_x^{\infty}  \bigg[\sup_{t \le \tau < \infty} \frac{u(\tau)}{V^2(\tau)}\bigg]^q  w(t)\,dt\bigg)^{r / p} \bigg[\sup_{x \le \tau < \infty} \frac{u(\tau)}{V^2(\tau)}\bigg]^q \bigg(\int_0^x V^{p'}(y)v(y)\,dy\bigg)^{r / p'} w(x)\,dx \bigg)^{1/r}, \\
    C_4: & = \bigg(\int_0^{\infty} W^{r / p}(x) \bigg[\sup_{x \le \tau < \infty} \bigg[\sup_{\tau \le y < \infty} \frac{u(y)}{V^2(y)}\bigg] \bigg(\int_0^x V^{p'}(y)v(y)\,dy\bigg)^{1 / p'} \bigg]^r  w(x)\,dx \bigg)^{1/r};
    \end{align*}

    {\rm (iv)} $q < 1 = p$,  and in this case $c \ap D_1 + D_2 + D_3 + D_4$, where
    \begin{align*}
    D_1: & = \bigg(\int_0^{\infty} \bigg(\int_x^{\infty}  \bigg[\sup_{t \le \tau < \infty} \frac{u(\tau)}{B(\tau)}\bigg]^q  w(t)\,dt\bigg)^{r / p} \bigg[\sup_{x \le \tau < \infty} \frac{u(\tau)}{B(\tau)}\bigg]^q \bigg(\sup_{0 < y \le x} \frac{B(y)}{V(y)}\bigg)^{r} w(x)\,dx \bigg)^{1/r}, \\
    D_2: & = \bigg(\int_0^{\infty} W^{r / p}(x) \bigg[\sup_{x \le \tau < \infty} \bigg[\sup_{\tau \le y < \infty} \frac{u(y)}{B(y)}\bigg] \bigg(\sup_{0 < y \le x} \frac{B(y)}{V(y)}\bigg) \bigg]^r  w(x)\,dx \bigg)^{1/r},\\
    D_3: & = \bigg(\int_0^{\infty} \bigg(\int_x^{\infty}  \bigg[\sup_{t \le \tau < \infty} \frac{u(\tau)}{V^2(\tau)}\bigg]^q  w(t)\,dt\bigg)^{r / p} \bigg[\sup_{x \le \tau < \infty} \frac{u(\tau)}{V^2(\tau)}\bigg]^q V^r(x) w(x)\,dx \bigg)^{1/r}, \\
    D_4: & = \bigg(\int_0^{\infty} W^{r / p}(x) \bigg[\sup_{x \le \tau < \infty} \bigg[\sup_{\tau \le y < \infty} \frac{u(y)}{V^2(y)}\bigg] V(x) \bigg]^r  w(x)\,dx \bigg)^{1/r};
    \end{align*}

    {\rm (v)} $p \le q$, and in this case
    $c \ap E_1 + E_2$, where
    \begin{align*}
    E_1: & = \sup_{x > 0} \bigg( \bigg[\sup_{x \le \tau < \infty} \frac{u(\tau)}{B(\tau)}\bigg]^q \int_0^x w(t)\,dt + \int_x^{\infty} \bigg[\sup_{t \le \tau < \infty} \frac{u(\tau)}{B(\tau)}\bigg]^q w(t)\,dt\bigg)^{1 / q} \sup_{0 < y \le x} \frac{B(y)}{V^{1 / p}(y)}, \\
    E_2: & = \sup_{x > 0}\bigg( \bigg[ \sup_{x \le y < \infty} \frac{u^p(y)}{V^2(y)}\bigg]^{q / p}  \int_0^x w(t)\,dt + \int_x^{\infty}  \bigg[ \sup_{t \le y < \infty} \frac{u^p(y)}{V^2(y)}\bigg]^{q / p} w(t)\,dt\bigg)^{1 / q} V^{1 / p}(x);
    \end{align*}

    {\rm (vi)} $q < p$,  and in this case $c \ap F_1 + F_2 + F_3 + F_4$, where
    \begin{align*}
    F_1: & = \bigg(\int_0^{\infty} \bigg(\int_0^x w(t)\,dt\bigg)^{r / p} \bigg[\sup_{x \le \tau < \infty} \bigg[\sup_{\tau \le y < \infty} \frac{u(y)}{B(y)}\bigg]^p  \bigg( \sup_{0 < y \le \tau} \frac{B(y)^p}{V(y)} \bigg)\bigg]^{r / p} w(x)\,dx \bigg)^{1 / r}, \\
    F_2: & = \bigg(\int_0^{\infty} \bigg(\int_x^{\infty} \bigg[\sup_{t \le \tau < \infty} \frac{u(\tau)}{B(\tau)}\bigg]^q w(t)\,dt\bigg)^{r / p} \bigg[\sup_{0 < \tau \le x} \frac{B^p(\tau)}{V(\tau)}\bigg]^{r / p}\bigg[\sup_{x \le \tau < \infty} \frac{u(\tau)}{B(\tau)}\bigg]^q w(x)\,dx \bigg)^{1 / r}, \\
    F_3: & = \bigg( \int_0^{\infty} \bigg( \int_0^x w(t)\,dt\bigg)^{r / p} \bigg(  \sup_{x \le \tau < \infty}\bigg[\sup_{\tau \le y < \infty} \frac{u^p(y)}{V^2(y)}\bigg] V(\tau)\bigg)^{r / p} w(x)\,dx \bigg)^{1 / r}, \\
    F_4: & = \bigg(\int_0^{\infty} \bigg( \int_x^{\infty}\bigg[\sup_{t \le y < \infty} \frac{u^p(y)}{V^2(y)}\bigg]^{q / p} w(t)\,dt\bigg)^{r / p}
    \bigg[\sup_{x \le y < \infty} \frac{u^p(y)}{V^2(y)}\bigg]^{q / p} V^{r / p}(x)w(x)\,dx \bigg)^{1 / r}.
    \end{align*}
\end{theorem}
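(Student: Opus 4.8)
The plan is to reduce the restricted inequality on the cone $\M^{\dn}(0,\i)$ to two unrestricted weighted inequalities --- one of Hardy type and one of supremum type --- each of which is, or reduces to, an already known one, and then to assemble the resulting criteria. Fix $f \in \M^{\dn}(0,\i)$. For $t > 0$ and $\tau \ge t$ write $\int_0^{\tau} fb = \int_0^{t} fb + \int_t^{\tau} fb$ and take the supremum over $\tau \ge t$; since each of the two resulting pieces is dominated pointwise by $T_{u,b}f$ while their sum dominates $T_{u,b}f$, we obtain
\[
\|T_{u,b}f\|_{q,w,\I} \ap \|T_1 f\|_{q,w,\I} + \|T_2 f\|_{q,w,\I},
\]
where $T_1 f(t) := h(t)\int_0^t fb$ with $h(t) := \sup_{\tau \ge t} u(\tau)/B(\tau)$ (non-increasing), and $T_2 f(t) := \sup_{\tau \ge t}\frac{u(\tau)}{B(\tau)}\int_t^{\tau} fb$. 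Hence \eqref{Tub.thm.1.eq.1} holds on $\M^{\dn}(0,\i)$ if and only if both $\|T_1 f\|_{q,w,\I} \ls \|f\|_{p,v,\I}$ and $\|T_2 f\|_{q,w,\I} \ls \|f\|_{p,v,\I}$ hold for all $f \in \M^{\dn}(0,\i)$.

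For $T_1$: since $\|T_1 f\|_{q,w,\I} = \big\|\int_0^{\cdot} fb\big\|_{q,\,h^q w,\I}$, the substitution $s = B(y)$ --- which turns $fb\,dy$ into $\tilde f(s)\,ds$ with $\tilde f = f\circ B^{-1}$ still non-increasing, and transforms the outer weight accordingly --- reduces $\|T_1 f\|_{q,w,\I} \ls \|f\|_{p,v,\I}$ on $\M^{\dn}(0,\i)$ to the \emph{classical} weighted Hardy inequality $\big\|\int_0^{\cdot}\tilde f\big\|_{q,\tilde w,\I} \ls \|\tilde f\|_{p,\tilde v,\I}$ on $\M^{\dn}(0,\i)$. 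The latter is known for all relevant $p,q$: Sawyer's theorem for $1 < p$, the elementary criterion for $p = 1$, and the Carro--Soria/Sinnamon reduction for $0 < p \le 1$, with a discretization argument producing the integral conditions when $q < p$. Because the extra factor $h$ is non-increasing, integrating $h^q w$ by parts shows the Sawyer-type weight functional of the transformed problem is equivalent, back in the original variables, to
\[
\Big[\sup_{\tau \ge x} \tfrac{u(\tau)}{B(\tau)}\Big]^q W(x) + \int_x^{\infty}\Big[\sup_{\tau \ge t} \tfrac{u(\tau)}{B(\tau)}\Big]^q w(t)\,dt ,
\]
while the dual side contributes the ``$B/V$''-factor $\big(\int_0^x (B(y)/V(y))^{p'}v(y)\,dy\big)^{1/p'}$ (respectively its $p \le 1$ analogues $\sup_{0<y\le x}B(y)/V(y)$ and $\sup_{0<y\le x}B(y)/V^{1/p}(y)$). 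Together these give precisely the first summand in each case --- $A_1$, $B_1$, $C_1$, $C_2$, $D_1$, $D_2$, $E_1$, $F_1$, $F_2$.

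For $T_2$: here one removes the cone restriction on $f$ by means of the dual description of $\M^{\dn}(0,\i)$ inside $L^p(v,\I)$. For $1 < p < \infty$ this is Sawyer's duality, whose fingerprint is the quadratic expression $V^2$ (equivalently the kernels $u/V^2$) appearing in $A_2$, $C_3$, $C_4$; for $p = 1$ and $0 < p < 1$ the corresponding supremum-type descriptions of $\M^{\dn}(0,\i)$ produce instead $V(x)$, $V^{1/p}(x)$ and $\sup_{y \ge \cdot} u^p(y)/V^2(y)$, as in $B_2$, $D_3$, $D_4$, $E_2$, $F_3$, $F_4$. After this step $T_2$ becomes an unrestricted iterated Hardy operator involving suprema between weighted Lebesgue spaces on $(0,\i)$, whose boundedness is characterized by the methods available for such operators. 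Adding the two families yields the stated criteria $A_1+A_2$, $B_1+B_2$, \ldots, $F_1+\cdots+F_4$, and the necessity of each summand is verified by testing with $f = \chi_{(0,x)}$ (for the $T_1$-part) and with the extremals of the above dualities (for the $T_2$-part).

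The main obstacle is the range $q < p$ (cases (iii), (iv) and (vi)): the Hardy and supremum inequalities behind $T_1$ and $T_2$ are then accessible only in discrete (``block'') form, so one must run the full discretization--antidiscretization machinery to pass to the integral conditions $C_i$, $D_i$, $F_i$ and to check that the two discrete families recombine into the displayed continuous ones; keeping the non-increasing multiplier $h$ --- and, in the $T_2$-part, the supremum over $\tau \ge t$ --- under control through the blocking is the delicate point. A secondary difficulty is that for $0 < p \le 1$ duality in $L^p(v)$ is unavailable and must be replaced throughout by the Carro--Soria reduction, which is why cases (v)--(vi) carry the non-dual quantities $V^{1/p}$ and $u^p/V^2$ rather than $B/V$ and $u/V^2$.
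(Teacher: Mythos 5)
First, a point of reference: the paper does not actually prove Theorem~\ref{Tub.thm.1} --- it is imported verbatim from \cite[Theorems 5.1 and 5.5]{GogMusISI} --- so the only indication of the intended method is the proof of Theorem~\ref{Tub.thm.2}, where the cone-restricted inequality is reduced, via a reduction theorem for non-increasing functions, to \emph{two unrestricted} inequalities for arbitrary $h\in\M^+\I$: one for the operator $\sup_{\tau\ge t}\frac{u(\tau)}{B(\tau)}\int_0^\tau h$ against the weight $(V/B)^p v^{1-p}$ (producing the ``$B/V$''-family $A_1,C_1,C_2,\dots$) and one for the operator with kernel $u/V^2$ against the weight $V^{-p}v^{1-p}$ (producing the ``$V^2$''-family $A_2,C_3,C_4,\dots$). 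Each two-part functional such as $[\sup_{x\le\tau}u(\tau)/B(\tau)]^qW(x)+\int_x^\infty[\sup_{t\le\tau}u(\tau)/B(\tau)]^qw$ is then the known characterization of a supremal operator composed with a Hardy operator acting on \emph{arbitrary} non-negative functions.

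Your proposal instead splits the \emph{operator}: $T_{u,b}f\ap T_1f+T_2f$ with $T_1f(t)=h(t)\int_0^tfb$, $h(t)=\sup_{\tau\ge t}u(\tau)/B(\tau)$, and $T_2f(t)=\sup_{\tau\ge t}\frac{u(\tau)}{B(\tau)}\int_t^\tau fb$. The pointwise equivalence is correct, but the ensuing bookkeeping is not, and this is where the proof breaks. The inequality $\|T_1f\|_{q,w,\I}\ls\|f\|_{p,v,\I}$ on the cone is a Hardy inequality with outer weight $h^qw$ restricted to non-increasing functions; for $1<p\le q$ its Sawyer-type characterization consists of \emph{two} conditions: the dual condition $\sup_x\big(\int_x^\infty h^qw\big)^{1/q}\big(\int_0^x(B/V)^{p'}v\big)^{1/p'}$ \emph{and} a test-function condition of the form $\sup_x\big(\int_0^xB^qh^qw\big)^{1/q}V(x)^{-1/p}$. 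The latter is not the first summand of $A_1$ (which reads $h(x)^qW(x)$ paired with the $(B/V)^{p'}v$-integral, not $\int_0^xB^qh^qw$ paired with $V^{-1/p}$), and it is not obviously absorbed by $A_2$; your claim that ``the Sawyer-type weight functional of the transformed problem is equivalent to'' the $A_1$-expression conflates the cone-restricted Hardy characterization with the unrestricted supremal-operator characterization. Symmetrically, the treatment of $T_2$ (``remove the cone restriction by Sawyer duality, whence the $V^2$'') is asserted rather than derived: $T_2$ is not one of the operators whose cone-duals are tabulated, and handling the supremum over $\tau\ge t$ of $\int_t^\tau fb$ under duality/discretization is exactly the hard content of \cite{GogMusISI}. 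So while your decomposition is legitimate and could in principle be made to work, the central step --- that the four (or more) conditions actually produced by the two sub-problems recombine into the displayed families $A_1+A_2$, \dots, $F_1+\cdots+F_4$ --- is precisely the substance of the theorem and is left unproved.
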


Now we give the solution of inequality \eqref{Tub.thm.1.eq.1}, when $p = \infty$ or $q = \infty$.
\begin{theorem}\label{Tub.thm.2}
    Let $0 < p < \infty$. Assume that $b \in \W\I$, $u,\,w \in \W\I \cap C\I$
    is such that $0 < B(t) < \infty$, $0 < V(x) < \i$ and $0 < W(x) < \infty$ for all $x > 0$.
    Then inequality
    \begin{equation}\label{Tub.thm.1.eq.1*}
    \|T_{u,b}f \|_{\infty,w,\I} \le c \| f \|_{p,v,\I}, \qq f \in \M^{\dn}(0,\i)
    \end{equation}
    is satisfied with the best constant $c$ if and only if:

    {\rm (i)} $1 < p$, and in this case $c \ap G_1 + G_2$, where
    \begin{align*}
    G_1: & = \sup_{x > 0}\bigg( \sup_{x \le t < \infty} \bigg[\sup_{0 < \tau \le t} w(\tau) \bigg] \frac{u(t)}{B(t)}\bigg) \bigg(\int_0^x \bigg(\frac{B(y)}{V(y)}\bigg)^{p'}v(y)\,dy\bigg)^{1 / p'}, \\
    G_2: &  = \sup_{x > 0}\bigg( \sup_{x \le t < \infty} \bigg[\sup_{0 < \tau \le t} w(\tau) \bigg] \frac{u(t)}{V^2(t)}\bigg) \bigg(\int_0^x V^{p'}(y)v(y)\,dy\bigg)^{1 / p'};
    \end{align*}

    {\rm (ii)} $p \le 1$, and in this case $c \ap H_1 + H_2$, where
    \begin{align*}
    H_1: & = \sup_{x > 0}\bigg( \sup_{0 < y \le x} \bigg( B(y) \sup_{y \le t < \infty}\bigg[\sup_{0 < \tau \le t} w(\tau) \bigg] \frac{u(t)}{B(t)} \bigg)\bigg)  V^{-1/p}(x), \\
    H_2: &  = \sup_{x > 0}\bigg( \sup_{x \le t < \infty} \bigg[\sup_{0 < \tau \le t} w(\tau) \bigg] \frac{u(t)}{B(t)}\bigg) \frac{B(x)}{V^{1 / p}(x)}.
    \end{align*}
\end{theorem}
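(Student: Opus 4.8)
The plan is to reduce inequality~\eqref{Tub.thm.1.eq.1*} to a one-parameter family of restricted Hardy inequalities, one for each level $y>0$, and to evaluate the constant of each of them via the Sawyer duality principle on the cone of non-increasing functions. \emph{Step 1 (reduction of the left-hand side).} Put $W^{*}(y):=\sup_{0<s\le y}w(s)$, the least non-decreasing majorant of $w$. Since $w\in C\I$, the $\esup$ defining $\|T_{u,b}f\|_{\infty,w,\I}$ is an honest supremum, and since $T_{u,b}f$ is non-increasing, interchanging the two suprema will give the exact identity
\begin{equation*}
\|T_{u,b}f\|_{\infty,w,\I}=\sup_{y>0}\Big(\sup_{0<s\le y}w(s)\Big)\frac{u(y)}{B(y)}\int_0^{y}f(t)b(t)\,dt=\sup_{y>0}\sigma(y)\int_0^{y}f(t)b(t)\,dt
\end{equation*}
for all $f\in\M^{\dn}\I$, where $\sigma(y):=W^{*}(y)u(y)/B(y)$. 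Hence \eqref{Tub.thm.1.eq.1*} is equivalent to $\sup_{y>0}\sigma(y)\int_0^{y}fb\le c\|f\|_{p,v,\I}$ for all $f\in\M^{\dn}\I$.

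\emph{Step 2 (separating the level and the local norm).} This inequality holds with constant $c$ if and only if $\sigma(y)\int_0^{y}fb\le c\|f\|_{p,v,\I}$ holds for every fixed $y>0$ and all $f\in\M^{\dn}\I$; therefore the least $c$ satisfies $c\ap\sup_{y>0}\sigma(y)N(y)$, where $N(y):=\sup\{\int_0^{y}fb:\ f\in\M^{\dn}\I,\ \|f\|_{p,v,\I}\le1\}$ is the norm of $b\chi_{(0,y)}$ in the ``down-dual'' of $\Lambda^{p}(v)$. I would evaluate $N(y)$ via the Sawyer duality principle (see \cite{sawyer1990} and \cite{cpss}). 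Splitting $f=f(y)+(f-f(y))\chi_{(0,y)}$ for $f\in\M^{\dn}\I$, the constant term contributes at most $f(y)B(y)\le B(y)V(y)^{-1/p}\|f\|_{p,v,\I}$ (sharp, by $f=\chi_{(0,y)}$), while the remainder is non-increasing and vanishes on $[y,\infty)$, so the duality formula applies to it with $G(t):=\int_0^{\min(t,y)}b=B(\min(t,y))$; estimating $\int_y^{\infty}vV^{-p'}\ls V(y)^{1-p'}$ to absorb the contribution at infinity --- so that no hypothesis on $V(\infty)$ is needed --- this should yield
\begin{equation*}
N(y)\ap\bigg(\int_0^{y}\Big(\frac{B(t)}{V(t)}\Big)^{p'}v(t)\,dt\bigg)^{1/p'}+\frac{B(y)}{V(y)^{1/p}}\quad(1<p<\infty),\qquad N(y)\ap\sup_{0<t\le y}\frac{B(t)}{V(t)^{1/p}}\quad(0<p\le1).
\end{equation*}

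\emph{Step 3 (simplification).} It then remains to insert these formulas into $c\ap\sup_{y>0}\sigma(y)N(y)$ and rewrite the suprema in the stated shape. The only tool needed is the elementary identity
\begin{equation*}
\sup_{y>0}\Phi(y)\Psi(y)=\sup_{x>0}\Big(\sup_{x\le t<\infty}\Phi(t)\Big)\Psi(x)\qquad(\Phi\ge0,\ \Psi\ \text{non-decreasing}),
\end{equation*}
together with $\int_0^{x}V^{p'}v\ap V(x)^{p'+1}$, equivalently $\big(\int_0^{x}V^{p'}v\big)^{1/p'}\ap V(x)^{2-1/p}$. For $1<p$: applying the identity with $\Phi=\sigma$ and $\Psi=\big(\int_0^{\cdot}(B/V)^{p'}v\big)^{1/p'}$ reproduces exactly $G_1$, and writing $\sigma(y)B(y)V(y)^{-1/p}=\big(W^{*}(y)u(y)V(y)^{-2}\big)V(y)^{2-1/p}$ and applying the identity with $\Psi=V^{2-1/p}$ reproduces a term equivalent to $G_2$; hence $c\ap G_1+G_2$. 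For $p\le1$: a routine rearrangement of the nested suprema turns $\sup_{y>0}\sigma(y)\sup_{0<t\le y}B(t)V(t)^{-1/p}$ into the form $H_1+H_2$ (in fact $H_1\ap H_2$, so either summand alone already equals $c$ up to constants).

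\emph{The main obstacle} is concentrated in Steps 1 and 2: the passage to the non-decreasing majorant $W^{*}$ together with the interchange of suprema (this is precisely where the continuity of $w$ is used), and the uniform-in-$y$, two-term evaluation of $N(y)$ for $p>1$ --- one has to make sure the constants in the duality estimate are independent of $y$ and that the case $V(\infty)<\infty$ is handled, which is why the duality is better applied directly to the truncated weight $b\chi_{(0,y)}$ rather than quoted from a formula involving $V(\infty)$. Step 3 is pure bookkeeping.
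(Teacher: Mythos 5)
Your argument is correct, and it reaches the stated constants by a route that is organized differently from the paper's. The paper first establishes the same identity as your Step 1 (its formula \eqref{eq.9999}, obtained by the two $\esup$-interchange rules), but then, for $p>1$, it reduces the resulting inequality \eqref{eq.7.7.} to \emph{two} weighted Hardy inequalities over arbitrary non-negative $h$ (following the reduction scheme of the preprint \cite{GogMusISI}) and characterizes each by the classical Muckenhoupt-type condition for $L^p\to L^\infty$ Hardy operators, which yields $G_1$ and $G_2$; for $p\le 1$ it simply quotes \cite[Theorem 5.1, (v)]{GogMusIHI} and splits the resulting $B(\min\{x,y\})$-condition into $H_1+H_2$. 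You instead freeze the level $y$, identify $c=\sup_{y>0}\sigma(y)N(y)$ with $N(y)$ the down-dual norm of $b\chi_{(0,y)}$, and evaluate $N(y)$ by Sawyer's duality principle (for $p>1$) or by the elementary characterization of $\int_0^\infty fg\le C\|f\|_{p,v,\I}$ on $\M^{\dn}\I$ (for $p\le 1$); your estimates $\int_y^\infty V^{-p'}v\ls V(y)^{1-p'}$, $\int_0^x V^{p'}v\ap V(x)^{p'+1}$, and the test function $\chi_{(0,y)}$ do give $N(y)$ uniformly in $y$, and the final sup-interchanges are exact identities, so the bookkeeping in Step 3 is sound (including the observation that $H_1=H_2$, which the paper leaves implicit in the phrase ``evidently holds iff''). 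What your version buys is self-containedness --- it rests only on Sawyer duality and elementary manipulations of suprema, rather than on the two unpublished preprints --- and it makes the two-term structure $G_1+G_2$ visibly correspond to the two terms of the dual norm; what the paper's version buys is uniformity with Theorem \ref{Tub.thm.1} and Theorem \ref{Tub.thm.3.1}, all of which are instances of the same iterated-Hardy machinery. One small inaccuracy worth fixing in a write-up: the decomposition $f=f(y)+(f-f(y))\chi_{(0,y)}$ you sketch is not actually needed, since applying the duality principle directly to $g=b\chi_{(0,y)}$ already produces both terms of $N(y)$; as written, that sentence mixes two derivations.
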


\begin{proof}
Whenever $F,\,G$ are non-negative measurable functions on
$(0,\infty)$ and $F$ is non-increasing, then
$$
\esup_{t \in (0,\infty)} F(t)G(t) = \esup_{t \in (0,\infty)} F(t)
\esup_{\tau \in (0,t)} G(\tau);
$$
likewise, when $F$ is non-decreasing, then
$$
\esup_{t \in (0,\infty)} F(t)G(t) = \esup_{t \in (0,\infty)} F(t)
\esup_{\tau \in (t,\infty)} G(\tau).
$$
Hence
\begin{equation}\label{eq.9999}
\|T_{u,b}f \|_{\infty,w,\I} = \sup_{x>0} \bigg( \sup_{x \le t <
\infty} \bigg[ \sup_{0 < \tau \le t} w(\tau) \bigg]
\frac{u(t)}{B(t)} \bigg) \int_0^x f(y)b(y)\,dy,
\end{equation}
and inequality \eqref{Tub.thm.1.eq.1*} is equivalent to the
inequality
\begin{equation}\label{eq.7.7.}
\sup_{x>0} \bigg( \sup_{x \le t < \infty} \bigg[ \sup_{0 < \tau \le
t} w(\tau) \bigg] \frac{u(t)}{B(t)} \bigg) \int_0^x f(y)b(y)\,dy \le
c \, \bigg( \int_0^{\infty} f^p (y) v(y)\,dy\bigg)^{1/p}, \qq f \in
\M^{\dn}(0,\i).
\end{equation}

    {\rm (i)} Let $p > 1$. As in the proof of \cite[Theorem 5.1]{GogMusISI} it can be shown that \eqref{eq.7.7.} is equivalent to the following two inequalities:
\begin{align*}
\sup_{x>0} \bigg[ \sup_{0 < \tau \le x} w(\tau) \bigg] \frac{u(x)}{B(x)} \int_0^x h(y)\,dy & \le c \, \bigg( \int_0^{\infty} h^p(y) \bigg( \frac{V(y)}{B(y)} \bigg)^p v^{1-p}(y)\,dy \bigg)^{1/p},\qq h \in \M^+ (0,\i), \\
\sup_{x>0} \bigg[ \sup_{0 < \tau \le x} w(\tau) \bigg]
\frac{u(x)}{V^2(x)} \int_0^x h(y)\,dy & \le c \, \bigg(
\int_0^{\infty} \bigg(\frac{h(y)}{V(y)} \bigg)^p v^{1-p}(y)\,dy
\bigg)^{1/p},\qq h \in \M^+ (0,\i),
\end{align*}
which hold if and only if $G_1 < \infty$ and $G_2 < \infty$,
respectively (see, for instance,
\cites{opickuf,kufpers,kufmalpers}).

{\rm (ii)} Let $p \le 1$. It is known that inequality
\eqref{eq.7.7.} holds if and only if
$$
\sup_{x > 0} \bigg( \sup_{y>0} B(\min\{x,y\}) \bigg(\sup_{y \le t <
\infty} \bigg[\sup_{0 < \tau \le t} w(\tau)\bigg]
\frac{u(t)}{B(t)}\bigg)\bigg) V^{-1/p}(x)< \infty
$$
(see, for instance, \cite[Theorem 5.1, (v)]{GogMusIHI}), which is
evidently holds iff $H_1 < \infty$ and $H_2 < \infty$.

\end{proof}

\begin{theorem}\label{Tub.thm.3.1}
    Assume that $b \in \W\I$, $u,\,w \in \W\I \cap C\I$
    is such that $0 < B(t) < \infty$, $0 < V(x) < \i$ and $0 < W(x) < \infty$ for all $x > 0$.
    Then inequality
    \begin{equation}\label{Tub.thm.1.eq.1.1}
    \|T_{u,b}f \|_{\infty,w,\I} \le c \, \| f \|_{\infty,v,\I}, \qq f \in \M^{\dn}(0,\i)
    \end{equation}
    holds if and only if
    $$
    I : = \sup_{x > 0} \bigg( \int_0^x \frac{b(y)\,dy}{\esup_{\tau \in (0,y)} v(\tau)}\bigg) \bigg[ \sup_{0 < \tau \le x} w(\tau) \bigg] \frac{u(x)}{B(x)}< \infty.
    $$

    Moreover, the best constant $c$ in \eqref{Tub.thm.1.eq.1.1} satisfies $c \ap I$.
\end{theorem}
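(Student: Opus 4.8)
The plan is to reduce inequality \eqref{Tub.thm.1.eq.1.1} to a one‑weight Hardy‑type estimate by first rewriting its left–hand side. Since $T_{u,b}f$ is non‑increasing (it is a supremum over the shrinking intervals $[t,\infty)$), the argument establishing \eqref{eq.9999} in the proof of Theorem \ref{Tub.thm.2} applies verbatim, and, using in addition that $x\mapsto\int_0^x f(y)b(y)\,dy$ is non‑decreasing, it gives, for every $f\in\M^{\dn}(0,\i)$,
$$
\|T_{u,b}f\|_{\infty,w,\I} = \sup_{s>0}\Bigl[\sup_{0<\tau\le s}w(\tau)\Bigr]\frac{u(s)}{B(s)}\int_0^s f(y)b(y)\,dy .
$$
Thus \eqref{Tub.thm.1.eq.1.1} is equivalent to
$$
\sup_{s>0}\Bigl[\sup_{0<\tau\le s}w(\tau)\Bigr]\frac{u(s)}{B(s)}\int_0^s f(y)b(y)\,dy \le c\,\esup_{y>0}f(y)v(y),\qquad f\in\M^{\dn}(0,\i),
$$
and the task is to compute the least admissible $c$ in this reduced inequality.

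For the sufficiency I would use the monotonicity of $f$ to linearise the $L^{\infty}_{v}$–constraint. If $f\in\M^{\dn}(0,\i)$ and $f(y)v(y)\le\|f\|_{\infty,v,\I}$ for a.e.\ $y$, then for every $y>0$ and a.e.\ $\tau\in(0,y)$ one has $f(y)\le f(\tau)\le\|f\|_{\infty,v,\I}/v(\tau)$, and taking the essential infimum over $\tau\in(0,y)$ yields $f(y)\le\|f\|_{\infty,v,\I}\bigl(\esup_{\tau\in(0,y)}v(\tau)\bigr)^{-1}$. Substituting this bound into $\int_0^s f b$ and taking the supremum over $s$ shows that the reduced inequality holds with $c=I$; in particular $I<\infty$ is sufficient, and the best constant is $\le I$.

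For the necessity (and the sharpness $c\ap I$) the natural candidate extremal is the largest non‑increasing minorant of $1/v$, namely $f_0(y):=\bigl(\esup_{\tau\in(0,y)}v(\tau)\bigr)^{-1}$. Because $0<V(x)<\infty$ for all $x>0$, one has $0<\esup_{\tau\in(0,x)}v(\tau)\le\infty$ on $\I$, so $f_0$ is a well–defined non‑negative non‑increasing function, i.e.\ $f_0\in\M^{\dn}(0,\i)$, with $\|f_0\|_{\infty,v,\I}=\esup_{y>0}v(y)/\esup_{\tau\in(0,y)}v(\tau)\le1$. Plugging $f_0$ into the formula for $\|T_{u,b}f_0\|_{\infty,w,\I}$ produces exactly $I$, so if \eqref{Tub.thm.1.eq.1.1} holds with constant $c$ then $I\le c$; together with the sufficiency this gives $c\ap I$ (indeed the best constant equals $I$). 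I expect the only real care needed in carrying this out to be the bookkeeping between essential suprema and the \emph{everywhere} monotonicity of $f$ in the step $f(y)\le\einf_{\tau\in(0,y)}\bigl(\|f\|_{\infty,v,\I}/v(\tau)\bigr)$, together with checking that $f_0$ genuinely lies in $\M^{\dn}(0,\i)$ and attains $I$; there is no substantial obstacle beyond this routine verification.
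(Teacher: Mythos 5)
Your reduction is exactly the paper's: both proofs invoke the identity \eqref{eq.9999} from the proof of Theorem \ref{Tub.thm.2} to rewrite $\|T_{u,b}f\|_{\infty,w,\I}$ and arrive at the inequality \eqref{eq.7.7.1} with $\esup_{x>0} f(x)v(x)$ on the right. The difference is in the final step: the paper simply cites \cite[Theorem 5.1, (viii)]{GogMusIHI} for the characterization of that reduced inequality, whereas you prove it directly — bounding any $f\in\M^{\dn}(0,\i)$ by the non-increasing envelope $\|f\|_{\infty,v,\I}\,f_0$ with $f_0(y)=\bigl(\esup_{\tau\in(0,y)}v(\tau)\bigr)^{-1}$ for the upper bound, and testing with $f_0$ itself for the lower bound. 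Your version is self-contained and even yields that the best constant \emph{equals} $I$, which is a small gain over the cited equivalence. The only point that deserves to be written out rather than waved at as ``routine'' is the verification that $\|f_0\|_{\infty,v,\I}\le 1$, i.e.\ that $v(y)\le\esup_{\tau\in(0,y)}v(\tau)$ for a.e.\ $y$: this follows by intersecting, over rational $s$, the null sets where $v(\tau)>\esup_{(0,s)}v$ for $\tau<s$, and then discarding the (countable) set of jump points of the monotone function $y\mapsto\esup_{(0,y)}v$; note also that $V(y)>0$ guarantees $\esup_{(0,y)}v>0$, so $f_0$ is finite and genuinely in $\M^{\dn}(0,\i)$. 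With that detail supplied, your argument is complete and correct.
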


\begin{proof}
    By \eqref{eq.9999}, we know that inequality \eqref{Tub.thm.1.eq.1.1} is equivalent to the inequality
    \begin{equation}\label{eq.7.7.1}
    \sup_{x>0} \bigg( \sup_{x \le t < \infty} \bigg[ \sup_{0 < \tau \le t} w(\tau) \bigg]
    \frac{u(t)}{B(t)} \bigg) \int_0^x f(y)b(y)\,dy \le c \, \esup_{x>0} f (x) v(x), \qq f \in \M^{\dn}(0,\i),
    \end{equation}
    which holds if and only if
    $$
    \sup_{x > 0} \bigg( \int_0^x \frac{b(y)\,dy}{\esup_{\tau \in (0,y)} v(\tau)}\bigg) \bigg[ \sup_{0 < \tau \le x} w(\tau) \bigg] \frac{u(x)}{B(x)}< \infty
    $$
    (see, for instance, \cite[Theorem 5.1, (viii)]{GogMusIHI}).

\end{proof}

% % % % % % % % % % % % % % % % % % % % % % % % % % % % % % % % % % % % % % % % % % % % % % % % % % % % % % % % % % % % % % % % % % %

\section{Main results}\label{s.3}

In this section we give statements and proofs of our main results.
\begin{lemma}\label{gog}
    Let $0 < r < \infty$. Assume that $\phi \in Q_r$. Suppose that $X$ is a quasi-Banach function space  on a measure space $(\rn,dx)$. Moreover, assume that $X$  satisfy a lower $r$-estimate. Then there exists $C > 0$ such that for any function $f$ from $X$  and any finite pairwise disjoint collection cubes $\{Q_j\}$ on $\rn$
    \begin{equation}\label{main2}
    \min_i \frac{\|f\chi_{Q_i}\|_X}{\phi (|Q_i|)} \le C \,\frac{\| f\chi_{\cup_i Q_i}\|_X}{\phi (|\cup_i Q_i|)}
    \end{equation}
    holds true.
\end{lemma}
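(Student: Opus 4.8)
The plan is to chain together three elementary estimates: the definition of the minimum on the left-hand side, the lower $r$-estimate enjoyed by $X$, and the $Q_r$-condition satisfied by $\phi$. The disjointness of the cubes is what makes all three compatible.

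First I would put $m := \min_i \|f\chi_{Q_i}\|_X / \phi(|Q_i|)$, so that $\|f\chi_{Q_i}\|_X \ge m\,\phi(|Q_i|)$ for every index $i$. Since the cubes $\{Q_i\}$ are pairwise disjoint, the functions $\{f\chi_{Q_i}\}$ have pairwise disjoint supports, $\sum_i f\chi_{Q_i} = f\chi_{\bigcup_i Q_i}$ a.e., and $|\bigcup_i Q_i| = \sum_i |Q_i|$. Applying the lower $r$-estimate to the family $\{f\chi_{Q_i}\}$ and then the bound $\|f\chi_{Q_i}\|_X \ge m\,\phi(|Q_i|)$ gives
\[
m \Big( \sum_i \phi(|Q_i|)^r \Big)^{1/r} \le \Big( \sum_i \|f\chi_{Q_i}\|_X^r \Big)^{1/r} \le C_1 \Big\| \sum_i f\chi_{Q_i} \Big\|_X = C_1 \, \|f\chi_{\bigcup_i Q_i}\|_X,
\]
where $C_1$ is the constant from the lower $r$-estimate. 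On the other hand, $\phi \in Q_r$ applied to the numbers $\{|Q_i|\}$ yields $\phi\big(\sum_i |Q_i|\big) \le C_2 \big( \sum_i \phi(|Q_i|)^r \big)^{1/r}$, i.e.
\[
\Big( \sum_i \phi(|Q_i|)^r \Big)^{1/r} \ge C_2^{-1} \, \phi\Big(\Big| \bigcup_i Q_i \Big|\Big).
\]
Combining the last two displays gives $m \, C_2^{-1} \phi(|\bigcup_i Q_i|) \le C_1 \|f\chi_{\bigcup_i Q_i}\|_X$, which is precisely \eqref{main2} with $C = C_1 C_2$.

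I do not expect a genuine obstacle here. The only points requiring a little care are the a.e.\ identity $\sum_i f\chi_{Q_i} = f\chi_{\bigcup_i Q_i}$ and the additivity $|\bigcup_i Q_i| = \sum_i |Q_i|$ (both immediate, since the $Q_i$ are disjoint open cubes, and overlaps of boundaries have measure zero), together with the observation that the constants $C_1$ (coming from the lower $r$-estimate of $X$) and $C_2$ (coming from $\phi \in Q_r$) are independent of $f$ and of the particular finite collection $\{Q_i\}$, so that their product $C = C_1 C_2$ is the desired uniform constant.
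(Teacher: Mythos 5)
Your argument is correct and is essentially identical to the paper's proof: both set $m=\min_i \|f\chi_{Q_i}\|_X/\phi(|Q_i|)$, apply the $Q_r$-condition of $\phi$ to the disjoint measures $|Q_i|$, and then invoke the lower $r$-estimate of $X$ on the disjointly supported pieces $f\chi_{Q_i}$. The only difference is cosmetic (the order in which the two inequalities are chained and the explicit tracking of $C_1C_2$), so there is nothing to add.
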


\begin{proof}
    Denote by
    $$
    A: = \min_i \frac{\|f\chi_{Q_i}\|_X}{\phi (|Q_i|)}.
    $$
    Since $\phi \in Q_r$, we have that
    $$
    A \, \phi (|\cup_i Q_i|) = A \,\phi \bigg( \sum_i |Q_i|\bigg) \ls A \,\bigg( \sum_i \phi(|Q_i|)^r\bigg)^{1/r} \le \bigg( \sum_i \|f\chi_{Q_i}\|_X^r\bigg)^{1/r}.
    $$
    On using the $r$-lower estimate property of $X$, we get that
    $$
    A \, \phi (|\cup_i Q_i|) \ls \bigg\| \sum_{i=1}^n f \chi_{Q_i}\bigg\|_X = \| f\chi_{\cup_i Q_i}\|_X.
    $$
\end{proof}

\begin{lemma}\label{main1}
    Let $0 < r < \infty$. Assume that $\phi \in Q_r$. Suppose that $X$ is a quasi-Banach function space
    satisfying a lower $r$-estimate.
    Then, for any $t > 0$,
    \begin{equation}\label{eq.ler-ineq1}
    (M_{\phi,X} f)^* (t) \le C \sup_{|E| > t / 3^n}  \frac{\|f \chi_E\|_X}{\phi (|E|)},
    \end{equation}
    where $C > 0$ is the constant appearing in \eqref{main2}.
\end{lemma}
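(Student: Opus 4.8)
The plan is to realize $M_{\phi,X}f$ as the Lerner-type maximal function $MF$ of an appropriate set function and then quote Theorem \ref{ler2005}. First I would define, for each measurable $E \subset \rn$ with $0 < |E| < \infty$,
\[
F(E) : = \frac{\|f\chi_E\|_X}{\phi(|E|)},
\]
setting $F(E) = \infty$ when $f\chi_E \notin X$. Since $f \in X_{\loc}$, for every cube $Q$ we have $f\chi_Q \in X$, so $F$ is finite on cubes, and directly from the definition of $M_{\phi,X}$ one gets $MF = M_{\phi,X}f$ on $\rn$; in particular this function is lower-semicontinuous, hence measurable, so $(M_{\phi,X}f)^*$ is well defined.

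The core step is to check that $F$ is pseudo-increasing, and this is exactly where Lemma \ref{gog} enters. Given a finite pairwise disjoint family of cubes $\{Q_i\}$, the union $\bigcup_i Q_i$ is bounded, hence contained in some cube $Q_0$; applying Lemma \ref{gog} to $f\chi_{Q_0} \in X$ (and noting $f\chi_{Q_0}\chi_{Q_i} = f\chi_{Q_i}$ and $f\chi_{Q_0}\chi_{\cup_i Q_i} = f\chi_{\cup_i Q_i}$) gives
\[
\min_i F(Q_i) \;\le\; C\, F\Bigl(\bigcup_i Q_i\Bigr),
\]
with $C$ the constant of \eqref{main2}. This is precisely condition \eqref{eq.pseudo-inc}, so $F$ is pseudo-increasing with the same constant.

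It then remains only to invoke Theorem \ref{ler2005} for this $F$: for every $t > 0$,
\[
(M_{\phi,X}f)^*(t) = (MF)^*(t) \le C \sup_{|E| > t/3^n} F(E) = C \sup_{|E| > t/3^n} \frac{\|f\chi_E\|_X}{\phi(|E|)},
\]
which is \eqref{eq.ler-ineq1} with exactly the claimed constant. (If the right-hand side here is infinite there is nothing to prove, so one may assume all quantities involved are finite.) I do not expect a genuine obstacle here: the only points demanding attention are the bookkeeping identification $MF = M_{\phi,X}f$ and the passage from $f \in X_{\loc}$ to the membership $f\chi_{Q_0}\in X$ needed to apply Lemma \ref{gog} verbatim, both of which are routine.
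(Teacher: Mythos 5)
Your proposal is correct and follows exactly the paper's argument: the paper also proves this lemma by combining Theorem \ref{ler2005} with Lemma \ref{gog}, and your write-up merely makes explicit the (routine) identification $MF = M_{\phi,X}f$ and the verification that $F$ is pseudo-increasing.
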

\begin{proof}
The statement follows by Theorem \ref{ler2005} and Lemma \ref{gog}.
\end{proof}

\begin{lemma}\label{est.from.above}
    Let $0 < r < \infty$. Assume that $\phi \in Q_r$. Suppose that $X$ is a r.i. quasi-Banach function space
    satisfying a lower $r$-estimate. Then, for any $t > 0$,
    \begin{equation}\label{eq.ler-ineq1.1}
    (M_{\phi,X} f)^* (t) \le C \sup_{\tau > t}  \frac{\|f^* \chi_{[0,\tau)}\|_{\bar X}}{\phi (\tau)},
    \end{equation}
    where $C > 0$ is constant independent of $f$ and $t$.
\end{lemma}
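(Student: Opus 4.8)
The plan is to feed Lemma~\ref{main1} into the rearrangement machinery of the r.i.\ space $X$, reduce the supremum over sets $E$ to a supremum over the single parameter $\tau=|E|$ by means of the elementary estimate $(f\chi_E)^*\le f^*\chi_{[0,|E|)}$, and finally to absorb the geometric constant $3^n$ produced by Lerner's covering theorem (Theorem~\ref{ler2005}) using that $\phi\in Q_r$ forces $\phi\in\Delta_2$.

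First I would apply Lemma~\ref{main1}: since $\phi\in Q_r$ and $X$ is a quasi-Banach function space satisfying a lower $r$-estimate, it gives, for every $t>0$,
$$
(M_{\phi,X}f)^*(t)\le C\sup_{|E|>t/3^n}\frac{\|f\chi_E\|_X}{\phi(|E|)},
$$
the supremum running over measurable sets $E$ of finite measure. Next, fixing such an $E$ and noting that $|f\chi_E|\le|f|$ while $\{|f\chi_E|>\lambda\}\subseteq E$ for every $\lambda>0$, one has $(f\chi_E)^*(s)\le f^*(s)$ for $0<s<|E|$ and $(f\chi_E)^*(s)=0$ for $s\ge|E|$, i.e.\ $(f\chi_E)^*\le f^*\chi_{[0,|E|)}$ pointwise on $\I$. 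Since $X$ is r.i., $\|f\chi_E\|_X=\|(f\chi_E)^*\|_{\bar X}$, so the lattice property of $\bar X$ yields $\|f\chi_E\|_X\le\|f^*\chi_{[0,|E|)}\|_{\bar X}$. The right-hand side depends on $E$ only through $\tau:=|E|\in(t/3^n,\infty)$, so taking the supremum in $E$ gives
$$
(M_{\phi,X}f)^*(t)\le C\sup_{\tau>t/3^n}\frac{g(\tau)}{\phi(\tau)},\qquad g(\tau):=\|f^*\chi_{[0,\tau)}\|_{\bar X},
$$
where $g$ is non-decreasing.

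It remains to replace $\sup_{\tau>t/3^n}$ by $\sup_{\tau>t}$, and this is the only step that is not purely formal. Taking two equal arguments in the $Q_r$-condition shows $\phi\in\Delta_2$, say $\phi(2s)\le c_1\phi(s)$ with $c_1\ge1$, whence $\phi(2^ks)\le c_1^k\phi(s)$ for every $k\in\N$. Choosing $k=k(n)\in\N$ with $2^k\ge3^n$, for every $\tau>t/3^n$ one has $2^k\tau>t$, $g(\tau)\le g(2^k\tau)$ and $\phi(\tau)^{-1}\le c_1^k\phi(2^k\tau)^{-1}$, so that
$$
\frac{g(\tau)}{\phi(\tau)}\le c_1^k\,\frac{g(2^k\tau)}{\phi(2^k\tau)}\le c_1^k\sup_{\sigma>t}\frac{g(\sigma)}{\phi(\sigma)}.
$$
Taking the supremum over $\tau>t/3^n$ and combining with the previous displays proves the lemma, the final constant depending only on $n$, $r$ and the structural constants of $\phi$ and $X$, hence independent of $f$ and $t$.

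The main obstacle I expect is precisely this last manoeuvre: one must notice that the factor $3^n$ buried inside Lemma~\ref{main1} can be swallowed with no monotonicity hypothesis whatsoever on $\phi$, using only the one-sided bound $\phi(2^ks)\le c_1^k\phi(s)$ together with the monotonicity of $\tau\mapsto\|f^*\chi_{[0,\tau)}\|_{\bar X}$. The invocation of Lemma~\ref{main1}, the rearrangement inequality $(f\chi_E)^*\le f^*\chi_{[0,|E|)}$, and the identity $\|f\|_X=\|f^*\|_{\bar X}$ are routine assembly of facts already recorded above.
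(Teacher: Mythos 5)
Your proof is correct and follows the same route as the paper: apply Lemma~\ref{main1}, pass to rearrangements via $\|f\chi_E\|_X=\|(f\chi_E)^*\|_{\bar X}$ and $(f\chi_E)^*\le f^*\chi_{[0,|E|)}$, and reduce the supremum over sets to a supremum over $\tau=|E|$. The only difference is that the paper states the final passage from $\sup_{\tau>t/3^n}$ to $\sup_{\tau>t}$ without comment, whereas you justify it explicitly (and correctly) by deriving $\phi\in\Delta_2$ from $\phi\in Q_r$ and using the monotonicity of $\tau\mapsto\|f^*\chi_{[0,\tau)}\|_{\bar X}$ — a detail worth having.
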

\begin{proof}
    By Lemma \ref{main1}, we have that
    $$
    (M_{\phi,X} f)^* (t) \le C \sup_{|E| > t / 3^n}  \frac{\|f \chi_E\|_X}{\phi (|E|)} =
    C \sup_{|E| > t / 3^n}  \frac{\|(f \chi_E)^*\|_{\bar{X}}}{\phi (|E|)} \le C \sup_{|E| > t / 3^n}  \frac{\|f^*  \chi_{[0,|E|)}\|_{\bar{X}}}{\phi (|E|)} \le C \sup_{\tau > t}  \frac{\|f^* \chi_{[0,\tau)}\|_{\bar X}}{\phi (\tau)}.
    $$
\end{proof}

\begin{corollary}\label{cor.main.1}
    Let $0 < \alpha \le r < \infty$, $\phi \in Q_r$ and $b \in \W(0,\infty)$ be such that $B(\infty) = \infty$, $B \in \Delta_2$ and $B(t) / t^{\alpha / r}$ is quasi-increasing. Then there exists a constant $C > 0$ such that for any  measurable function $f$ on $\rn$ the inequality
    $$
    (M_{\phi,\Lambda^{\alpha}(b)}f)^* (t) \le C \sup_{\tau > t} \frac{\bigg(\int_0^{\tau} (f^*)^{\alpha}(y) b(y)\,dy\bigg)^{1 / \alpha}}{\phi(\tau)}
    $$
    holds.
\end{corollary}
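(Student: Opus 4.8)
The plan is to read this off as a direct specialization of Lemma~\ref{est.from.above} to the space $X=\Lambda^{\alpha}(b)$; the only work is to check that $\Lambda^{\alpha}(b)$ satisfies the hypotheses of that lemma and to compute the quantity $\|f^{*}\chi_{[0,\tau)}\|_{\bar X}$ appearing on its right-hand side.

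First I would dispose of a degenerate case. If $\int_0^{\tau}(f^{*})^{\alpha}(y)b(y)\,dy=\infty$ for some $\tau>0$, then, since $\tau\mapsto\int_0^{\tau}(f^{*})^{\alpha}b$ is non-decreasing, the right-hand side of the asserted inequality is $+\infty$ for every $t>0$ and there is nothing to prove. So I may assume $\int_0^{\tau}(f^{*})^{\alpha}b<\infty$ for all $\tau\in\I$. Combined with the pointwise bound $(f\chi_Q)^{*}\le f^{*}$ (the distribution function of $f\chi_Q$ being dominated by that of $f$), this gives $\|f\chi_Q\|_{\Lambda^{\alpha}(b)}=\|(f\chi_Q)^{*}\|_{\alpha,b,\I}\le\big(\int_0^{|Q|}(f^{*})^{\alpha}b\big)^{1/\alpha}<\infty$ for every cube $Q$, i.e. $f\in\big(\Lambda^{\alpha}(b)\big)_{\loc}$, so $M_{\phi,\Lambda^{\alpha}(b)}f=M_{\phi,X}f$ is meaningfully defined.

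Next I would verify the structural hypotheses with $X=\Lambda^{\alpha}(b)$. The assumption $\phi\in Q_r$ with $0<r<\infty$ is in the statement. Since $B=\int_0^{\cdot}b$ satisfies $B\in\Delta_2$ and $B(\infty)=\infty$, the space $\Lambda^{\alpha}(b)$ is a rearrangement-invariant, order-continuous quasi-Banach function space by the fact recalled just before Theorem~\ref{KamMal}. Moreover, since $B\in\Delta_2$, $r\ge\alpha$, and $B(t)/t^{\alpha/r}$ is quasi-increasing, Theorem~\ref{KamMal} (applied with $w=b$, $W=B$, $p=\alpha$) shows that $\Lambda^{\alpha}(b)$ satisfies a lower $r$-estimate. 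Hence all hypotheses of Lemma~\ref{est.from.above} are met, and it furnishes a constant $C>0$, independent of $f$ and $t$, with
$$
(M_{\phi,\Lambda^{\alpha}(b)}f)^{*}(t)\le C\sup_{\tau>t}\frac{\|f^{*}\chi_{[0,\tau)}\|_{\bar X}}{\phi(\tau)}\qquad(t>0).
$$

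Finally I would identify the right-hand norm. The representation space $\bar X$ of $X=\Lambda^{\alpha}(b)$ is characterised by $\|g\|_{\bar X}=\|g^{*}\|_{\bar X}$ and $\|h\|_{X}=\|h^{*}\|_{\bar X}$ for all $h\in X$; since $\|h\|_{\Lambda^{\alpha}(b)}=\big(\int_0^{\infty}(h^{*})^{\alpha}b\big)^{1/\alpha}$, this forces $\|g\|_{\bar X}=\big(\int_0^{\infty}(g^{*})^{\alpha}b\big)^{1/\alpha}$. Because $f^{*}$ is non-increasing on $\I$, the truncation $f^{*}\chi_{[0,\tau)}$ is itself non-increasing, hence equals its own non-increasing rearrangement, so
$$
\|f^{*}\chi_{[0,\tau)}\|_{\bar X}=\bigg(\int_0^{\infty}\big(f^{*}\chi_{[0,\tau)}\big)^{\alpha}(y)\,b(y)\,dy\bigg)^{1/\alpha}=\bigg(\int_0^{\tau}(f^{*})^{\alpha}(y)\,b(y)\,dy\bigg)^{1/\alpha}.
$$
Substituting this into the previous display yields the claim. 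The only step needing any care is this last bookkeeping — matching the abstract norm $\|\cdot\|_{\bar X}$ to the concrete integral and noting that no rearrangement is lost under truncation of the already-decreasing $f^{*}$; everything else is a direct invocation of Lemma~\ref{est.from.above} and Theorem~\ref{KamMal}.
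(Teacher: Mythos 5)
Your proof is correct and follows exactly the paper's route: invoke Theorem~\ref{KamMal} (with $p=\alpha$, $w=b$) to get the lower $r$-estimate for $\Lambda^{\alpha}(b)$, then apply Lemma~\ref{est.from.above} with $X=\Lambda^{\alpha}(b)$ and identify $\|f^{*}\chi_{[0,\tau)}\|_{\bar X}$ with the concrete integral. The extra bookkeeping you supply (the degenerate case and the verification that the truncated decreasing function is its own rearrangement) is all sound and merely makes explicit what the paper leaves implicit.
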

\begin{proof}
In view of Theorem \ref{KamMal}, $\La^{\alpha}(b)$ satisfies a lower
$r$-estimate. Then the statement follows from Lemma \ref{est.from.above},
when $X = \Lambda^{\alpha}(b)$.
\end{proof}

\begin{corollary}\label{cor.basmilruiz.1}
    Let $0 < q \le p < \infty$. Then there exists a constant $C > 0$ such that for any  measurable function $f$ on $\rn$ the inequality
    \begin{equation*}
    (M_{p,q}f)^* (t) \le \frac{C}{t^{1/p}} \bigg( \int_0^t (f^*)^q (y) y^{q / p - 1}\,dy\bigg)^{1 / q}
    \end{equation*}
    holds.
\end{corollary}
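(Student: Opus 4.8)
The plan is to read this off from Corollary~\ref{cor.main.1} together with the identification, already observed in the introduction, that $M_{p,q} = M_{\phi,\Lambda^{\alpha}(b)}$ with $\alpha = q$, $b(t) = t^{q/p-1}$ and $\phi(t) = t^{1/p}$. Indeed, for every cube $Q$ one has
\[
\|f\chi_Q\|_{\Lambda^{q}(b)} = \bigg(\int_0^{\infty} \big((f\chi_Q)^*(t)\big)^{q}\, t^{q/p-1}\,dt\bigg)^{1/q} = \|f\chi_Q\|_{p,q}
\]
and $\phi(|Q|) = |Q|^{1/p} = \|\chi_Q\|_{p,q}$, so $M_{\phi,\Lambda^{q}(b)}f = M_{p,q}f$ pointwise, and it suffices to estimate the right-hand maximal operator.

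First I would verify the hypotheses of Corollary~\ref{cor.main.1} with the choice $r = p$. Since $q \le p$ we have $0 < \alpha = q \le r = p < \infty$. The $Q_{p}$-condition for $\phi(t) = t^{1/p}$ reads $\big(\sum_i t_i\big)^{1/p} \le C\big(\sum_i t_i\big)^{1/p}$ and so holds with $C = 1$. For $b(t) = t^{q/p-1}$ we get $B(t) = \tfrac{p}{q}\,t^{q/p}$, whence $B(\infty) = \infty$, the relation $B(2t) = 2^{q/p}B(t)$ shows $B \in \Delta_2$, and $B(t)/t^{\alpha/r} = B(t)/t^{q/p} = p/q$ is constant, hence quasi-increasing. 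Corollary~\ref{cor.main.1} then gives
\[
(M_{p,q}f)^*(t) \le C \sup_{\tau > t} \frac{1}{\tau^{1/p}}\bigg(\int_0^{\tau} (f^*)^{q}(y)\, y^{q/p-1}\,dy\bigg)^{1/q}.
\]

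It then remains to dispose of the supremum, that is, to show that $\tau \mapsto \tau^{-q/p}\int_0^{\tau} (f^*)^{q}(y)\, y^{q/p-1}\,dy$ is non-increasing on $(0,\infty)$, so that its supremum over $\tau > t$ does not exceed its value at $\tau = t$. The substitution $y = \tau s$ turns this expression into $\int_0^{1} \big(f^*(\tau s)\big)^{q}\, s^{q/p-1}\,ds$, and for each fixed $s \in (0,1)$ the integrand is non-increasing in $\tau$ because $f^*$ is non-increasing; hence so is the integral. Combining this with the displayed estimate yields exactly the claimed inequality, with the same constant $C$. I do not expect a genuine obstacle here: once Corollary~\ref{cor.main.1} is in hand, the argument is entirely routine, the only mild point being the monotonicity step just described (which, incidentally, is what allows the range $0 < q \le p$ rather than only $1 \le q \le p$ as in \eqref{eq.basmilruiz}).
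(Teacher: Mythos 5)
Your proof is correct and follows essentially the same route as the paper: identify $M_{p,q}$ with $M_{\phi,\Lambda^{\alpha}(b)}$ for $\alpha=q$, $b(t)=t^{q/p-1}$, $\phi(t)=t^{1/p}$, check the hypotheses of Corollary~\ref{cor.main.1} with $r=p$, and then observe that the supremum is attained at $\tau=t$. The only difference is that you spell out the monotonicity of $\tau\mapsto\tau^{-q/p}\int_0^{\tau}(f^*)^q(y)y^{q/p-1}\,dy$ via the substitution $y=\tau s$, a step the paper states without proof.
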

\begin{proof}
    Let $\alpha = q$, $b(t) =
    t^{q/ p - 1}$ and $\phi (t) = t^{1 / p}$ $(t >0)$. Then $M_{p,q} = M_{\phi,\Lambda^{\alpha}(b)}$.
    It is clear that $B(t) \approx t^{q / p}$ $(t>0)$. Since $\phi \in Q_r$, $B \in \Delta_2$ and
    $B(t) / t^{q / r}$ is quasi-increasing when $r = p \ge q$, by Corollary \ref{cor.main.1}, we get that
    $$
    (M_{p,q}f)^* (t) \le C \sup_{\tau > t}\frac{1}{\tau^{1/p}} \bigg( \int_0^{\tau} (f^*)^q (y) y^{q / p - 1}\,dy\bigg)^{1 /
    q} = \frac{C}{t^{1/p}} \bigg( \int_0^t (f^*)^q (y) y^{q / p - 1}\,dy\bigg)^{1 /
    q}.
    $$
\end{proof}

\begin{corollary}\label{cor.edmunopic.1}
    Let  $s \in (0,\infty)$, $\gamma \in (0,n)$ and $\mathbb A = (A_0,A_{\infty}) \in {\mathbb R}^2$.
    Then there exists a constant $C > 0$ depending only in $n,\,s,\,\gamma$ and $\mathbb A$ such that for all $f \in \mp (\rn)$
    and every $t \in (0,\infty)$
    \begin{equation}\label{frac.max op.eq.4.1}
    (M_{s,\gamma,\mathbb A} f)^* (t) \le C  \bigg[\sup_{\tau > t} \tau^{\gamma / n - 1} \ell^{-s {\mathbb A}}(\tau) \int_0^{\tau} (f^*)^s(y)\,dy \bigg]^{1/s}.
    \end{equation}
\end{corollary}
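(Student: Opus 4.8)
The plan is to recognise $M_{s,\gamma,\mathbb A}$ as one of the operators $M_{\phi,\La^{\alpha}(b)}$ already treated and then to read the estimate off Corollary~\ref{cor.main.1}. As noted in the introduction, with $\alpha = s$, $b \equiv 1$ and
\[
\phi(t) := t^{(n-\gamma)/(sn)}\,\ell^{\mathbb A}(t) \qquad (t > 0),
\]
one has $M_{s,\gamma,\mathbb A}g \ap M_{\phi,\La^{s}(b)}g$ pointwise on $\rn$ for every $g \in \M(\rn)$, the constants depending only on $n,s,\gamma,\mathbb A$; hence also $(M_{s,\gamma,\mathbb A}f)^{*} \ap (M_{\phi,\La^{s}(b)}f)^{*}$ on $\I$. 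So it suffices to check that $\alpha = s$, $r = s$, this $\phi$ and $b \equiv 1$ satisfy the hypotheses of Corollary~\ref{cor.main.1}, to apply it, and to rewrite the right-hand side.

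With $b \equiv 1$ we have $B(t) = t$, so $B(\i) = \i$, $B \in \Delta_2$ (with constant $2$) and $B(t)/t^{\alpha/r} = t^{1 - s/s} \equiv 1$ is quasi-increasing; also $0 < \alpha = r < \i$. The one substantive point is $\phi \in Q_{s}$. Put $\delta := (n-\gamma)/(sn)$; since $\gamma \in (0,n)$ we have $0 < \delta < 1/s$, so $\eta := 1/s - \delta = \gamma/(sn) > 0$, and we factor
\[
\phi(t) = t^{1/s}\cdot\bigl(t^{-\eta}\ell^{\mathbb A}(t)\bigr) =: \phi_{1}(t)\,\phi_{2}(t).
\]
Here $\phi_{1}(t) = t^{1/s}$ lies in $Q_{s}$ with constant $1$, because $\phi_{1}\bigl(\sum_{i} t_{i}\bigr) = \bigl(\sum_{i} t_{i}\bigr)^{1/s} = \bigl(\sum_{i}\phi_{1}(t_{i})^{s}\bigr)^{1/s}$ for any finite family of non-negative reals. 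And $\phi_{2}(t) = t^{-\eta}\ell^{\mathbb A}(t)$ is quasi-decreasing on $\I$: for fixed $\eta > 0$ the power $t^{-\eta}$ dominates the slowly varying factor $\ell^{\mathbb A}$, so on each of $(0,1]$ and $[1,\i)$ the function $\phi_{2}$ is, up to multiplicative constants, non-increasing (it has at most one interior critical point on each interval, a minimum on $(0,1]$ and a maximum on $[1,\i)$), and since $\phi_{2}$ stays between two fixed positive constants near $t = 1$ the two pieces glue to a quasi-decreasing function on $\I$. By the elementary observation in Section~\ref{pre} (a $Q_{s}$ function times a quasi-decreasing function is again in $Q_{s}$), $\phi = \phi_{1}\phi_{2} \in Q_{s}$. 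One could equivalently invoke Theorem~\ref{KamMal}: $\La^{s}(b) = L^{s}(\rn)$, which patently satisfies a lower $s$-estimate, is exactly what enters the proof of Corollary~\ref{cor.main.1}.

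Corollary~\ref{cor.main.1}, applied with $\alpha = r = s$, $b \equiv 1$ and this $\phi$, then yields $C > 0$ with
\[
(M_{\phi,\La^{s}(b)}f)^{*}(t) \le C\sup_{\tau > t}\frac{\bigl(\int_{0}^{\tau}(f^{*})^{s}(y)\,dy\bigr)^{1/s}}{\phi(\tau)} \qquad (t > 0).
\]
Since $\phi(\tau)^{-s} = \tau^{-(n-\gamma)/n}\,\ell^{-s\mathbb A}(\tau) = \tau^{\gamma/n - 1}\,\ell^{-s\mathbb A}(\tau)$, pulling the exponent $1/s$ out of the supremum (permissible since $x \mapsto x^{1/s}$ is increasing) rewrites the right-hand side as
\[
C\Bigl[\sup_{\tau > t}\tau^{\gamma/n - 1}\,\ell^{-s\mathbb A}(\tau)\int_{0}^{\tau}(f^{*})^{s}(y)\,dy\Bigr]^{1/s},
\]
and combining with $(M_{s,\gamma,\mathbb A}f)^{*} \ap (M_{\phi,\La^{s}(b)}f)^{*}$ gives \eqref{frac.max op.eq.4.1} with $C$ depending only on $n,s,\gamma,\mathbb A$. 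The only non-routine step is $\phi \in Q_{s}$, and within it the real content is that the logarithmic factor $\ell^{\mathbb A}$, for arbitrary $\mathbb A \in \R^{2}$, gets absorbed — which works precisely because $\gamma > 0$ keeps the polynomial exponent $(n-\gamma)/(sn)$ strictly below $1/s$, leaving a genuine power $t^{-\gamma/(sn)}$ to beat $\ell^{\mathbb A}$; this is also why, in contrast with the excluded case $\gamma = 0$, no sign restriction on $\mathbb A$ is needed here.
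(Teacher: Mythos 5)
Your proposal is correct and follows the paper's own route exactly: the same factorization $\phi(t)=t^{1/s}\cdot\bigl(t^{-\gamma/(sn)}\ell^{\mathbb A}(t)\bigr)$ into a $Q_s$ power times a quasi-decreasing factor, the same verification that $B(t)=t$ meets the hypotheses, and the same appeal to Corollary \ref{cor.main.1}. The only difference is that you justify the quasi-decreasingness of $t^{-\gamma/(sn)}\ell^{\mathbb A}(t)$ in detail where the paper merely asserts it, which is a welcome addition rather than a deviation.
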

\begin{proof}
    It is mentioned in the introduction that $M_{\phi,\Lambda^{\alpha}(b)} \ap M_{s,\gamma,\mathbb A}$, when $\alpha = s$,
    $b \equiv 1$ and $\phi (t) = t^{(n - \gamma) / (sn)} \ell^{\mathbb A} (t)$, $(t >0)$.
    Let $r = s$. Writing $\phi = \omega \cdot g$, where $\omega (t) = t^{1/s}$ and $g(t) = t^{- \gamma / (sn)} \ell^{\mathbb A} (t)$, $(t >0)$, observing that $\omega \in Q_s$ and $g$ is quasi-decreasing, we claim that $\phi \in Q_r$. On the other side, since $B(t) = t$, $t > 0$, we get that $B \in \Delta_2$ and $B(t) / t^{\alpha / r} \equiv 1$ is quasi-increasing. Hence, by Corollary \ref{cor.main.1}, inequality \eqref{frac.max op.eq.4.1} holds.
\end{proof}

\begin{lemma}\label{est.from.below}
    Let $0 < r < \infty$. Assume that $\phi \in \Delta_2$ is a quasi-increasing function on $(0,\infty)$. Suppose that $X$ is a r.i. quasi-Banach function space. Then, for any $t > 0$,
    \begin{equation}\label{eq.ler-ineq1.1.1}
    (M_{\phi,X} f)^* (t) \ge c \sup_{\tau > t}  \frac{\|f^* \chi_{[0,\tau)}\|_{\bar X}}{\phi (\tau)}, \qq f \in \mf^{\rad,\dn}(\rn),
    \end{equation}
    where $c > 0$ is constant independent of $f$ and $t$.
\end{lemma}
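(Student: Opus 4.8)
The plan is to produce, for each fixed $t>0$ and each $\tau>t$, a competitor cube (or rather an exhausting family of cubes) in the definition of $M_{\phi,X}f$ that sees essentially all of the mass $f^*\chi_{[0,\tau)}$. Since $f\in\mf^{\rad,\dn}(\rn)$ we may write $f(x)=h(|x|)$ with $h\in\mp^+((0,\infty);\dn)$, and then $f^*(s)=h((s/\omega_n)^{1/n})$ where $\omega_n=|B(0,1)|$; in particular $f$ is already equimeasurably arranged around the origin. For a ball $B=B(0,\rho)$ centred at the origin one has $f\chi_B$ radially decreasing with $(f\chi_B)^*=f^*\chi_{[0,|B|)}$, so $\|f\chi_B\|_X=\|f^*\chi_{[0,|B|)}\|_{\bar X}$ exactly. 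Fix $\tau>t$ and choose $\rho$ with $|B(0,\rho)|=\tau$. The issue is that $M_{\phi,X}f$ is a supremum over cubes, not balls, so I first replace $B(0,\rho)$ by the inscribed and circumscribed cubes $Q_{\mathrm{in}}\subset B(0,\rho)\subset Q_{\mathrm{out}}$, whose measures are comparable to $\tau$ up to dimensional constants ($|Q_{\mathrm{in}}|=c_n\tau$, $|Q_{\mathrm{out}}|=C_n\tau$).

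**From balls to cubes, using monotonicity of $\phi$ and $X$.**
Take any point $x$ with $|x|$ small, say $x$ in the small concentric cube $Q_{\mathrm{in}}$; certainly $x\in Q_{\mathrm{out}}\ni x$ is an admissible cube in the supremum defining $M_{\phi,X}f(x)$. Then
$$
M_{\phi,X}f(x)\ge \frac{\|f\chi_{Q_{\mathrm{out}}}\|_X}{\phi(|Q_{\mathrm{out}}|)}\ge \frac{\|f\chi_{Q_{\mathrm{in}}}\|_X}{\phi(C_n\tau)}=\frac{\|f^*\chi_{[0,c_n\tau)}\|_{\bar X}}{\phi(C_n\tau)},
$$
where the second inequality is the lattice property of $X$ (shrinking the cube to $Q_{\mathrm{in}}\subset Q_{\mathrm{out}}$ only decreases the norm) together with $\phi$ quasi-increasing so that $\phi(|Q_{\mathrm{out}}|)\le C\phi(C_n\tau)$; the last equality is the radial rearrangement identity above applied to $Q_{\mathrm{in}}$, whose restriction of $f$ is again radially decreasing. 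Now $\phi\in\Delta_2$ lets me absorb the dilation: iterating $\phi(C_n\tau)\le C'\phi(\tau)$ for the dimensional constant $C_n$. Likewise, because $c_n<1$, I need $\|f^*\chi_{[0,c_n\tau)}\|_{\bar X}\gtrsim\|f^*\chi_{[0,\tau)}\|_{\bar X}$; this is NOT true for a general r.i.\ space, so instead I should center the construction differently: pick $\rho$ so that the \emph{inscribed} cube $Q_{\mathrm{in}}$ has measure exactly $\tau$ (equivalently $|B(0,\rho)|=C_n'\tau$), and keep $Q_{\mathrm{out}}$ as the competitor cube containing $x$. Then the numerator becomes exactly $\|f^*\chi_{[0,\tau)}\|_{\bar X}$ and the denominator is $\phi(|Q_{\mathrm{out}}|)=\phi(C_n''\tau)\le C\phi(\tau)$ by $\Delta_2$ and quasi-monotonicity.

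**Passing to the rearrangement and taking the supremum.**
The estimate just obtained reads: for every $x$ in a cube of measure $\sim\tau$ around the origin,
$$
M_{\phi,X}f(x)\ge c\,\frac{\|f^*\chi_{[0,\tau)}\|_{\bar X}}{\phi(\tau)}.
$$
Since this holds on a set of measure comparable to $\tau$ — in fact on $Q_{\mathrm{in}}$ of measure exactly $\tau$, but more care is needed: I want the level set $\{M_{\phi,X}f>\lambda\}$ to have measure $\ge t$ whenever $\lambda< c\,\|f^*\chi_{[0,\tau)}\|_{\bar X}/\phi(\tau)$. The cube realizing the bound has measure $\sim\tau>t$, so for $\tau>t$ the set where $M_{\phi,X}f$ exceeds $c\,\|f^*\chi_{[0,\tau)}\|_{\bar X}/\phi(\tau)$ contains that cube and hence has measure $>t$ (possibly after adjusting the dimensional constant in the relation between $\tau$ and the cube's measure, i.e.\ replacing $\tau$ by $\tilde\tau\sim\tau$ — harmless since we take a supremum over all $\tau>t$ at the end, and $\phi(\tilde\tau)\le C\phi(\tau)$ by $\Delta_2$). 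Therefore, by the definition of the non-increasing rearrangement, $(M_{\phi,X}f)^*(t)\ge c\,\|f^*\chi_{[0,\tau)}\|_{\bar X}/\phi(\tau)$. Taking the supremum over all $\tau>t$ yields \eqref{eq.ler-ineq1.1.1}.

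**Main obstacle.**
The only genuinely delicate point is bookkeeping of the dimensional constants relating the measure of the competitor cube, the measure of the set on which the lower pointwise bound holds, and the argument $\tau$ inside $\phi$ and inside $f^*\chi_{[0,\tau)}$ — one must arrange that the \emph{numerator} is exactly $\|f^*\chi_{[0,\tau)}\|_{\bar X}$ (no shrinkage of the interval, since r.i.\ norms need not be doubling in the truncation parameter) while all the \emph{multiplicative} losses are pushed either onto $\phi$ (absorbed by $\Delta_2$ plus quasi-monotonicity) or onto the measure of the level set (absorbed because we only need it to exceed $t$, and we have a whole family of scales $\tau>t$ to play with). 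Everything else — the radial rearrangement identity $(f\chi_{B(0,\rho)})^*=f^*\chi_{[0,|B(0,\rho)|)}$ for $f\in\mf^{\rad,\dn}$, and the lattice property of $X$ — is immediate from the definitions.
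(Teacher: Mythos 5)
Your strategy is essentially the paper's: for $f\in\mf^{\rad,\dn}$ use sets centred at the origin as competitors, exploit the identity $(f\chi_{B(0,\rho)})^*=f^*\chi_{[0,|B(0,\rho)|)}$, absorb the ball-to-cube dilation into $\phi$ via $\Delta_2$ and quasi-monotonicity, and finally convert a pointwise lower bound valid on a set of measure exceeding $t$ into a bound on $(M_{\phi,X}f)^*(t)$ (the paper does this last step with the formula $f^*(t)=\sup_{|E|=t}\einf_{x\in E}|f(x)|$ applied to a ball of measure $t$, which is the same as your level-set argument). You are in fact more explicit than the paper about the cube-versus-ball discrepancy, which the paper hides inside a single $\gtrsim$.

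One intermediate step as written is false, however: you assert $\|f\chi_{Q_{\mathrm{in}}}\|_X=\|f^*\chi_{[0,|Q_{\mathrm{in}}|)}\|_{\bar X}$ on the grounds that the restriction of $f$ to the inscribed \emph{cube} $Q_{\mathrm{in}}$ is ``again radially decreasing.'' It is not: the level sets of $f\chi_{Q_{\mathrm{in}}}$ are intersections of balls with a cube, so in general $(f\chi_{Q_{\mathrm{in}}})^*\le f^*\chi_{[0,|Q_{\mathrm{in}}|)}$ with equality only when the truncating set is a super-level set of $f$, i.e.\ a ball centred at the origin. This inequality points the wrong way for a lower bound, and your subsequent adjustment (choosing $\rho$ so that $|Q_{\mathrm{in}}|=\tau$) does not cure it. The repair is immediate and is already contained in your own setup: choose $\rho$ with $|B(0,\rho)|=\tau$, let $Q_{\mathrm{out}}$ be the circumscribed cube (which contains any $x$ with $|x|<\rho$), and estimate $\|f\chi_{Q_{\mathrm{out}}}\|_X\ge\|f\chi_{B(0,\rho)}\|_X=\|f^*\chi_{[0,\tau)}\|_{\bar X}$ directly, never descending to an inscribed cube; the only loss is then in the denominator, $\phi(|Q_{\mathrm{out}}|)=\phi(C_n\tau)\lesssim\phi(\tau)$, handled exactly as you describe, and the level set $\{M_{\phi,X}f\gtrsim\|f^*\chi_{[0,\tau)}\|_{\bar X}/\phi(\tau)\}$ contains $Q_{\mathrm{out}}$, of measure $C_n\tau>t$. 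With this one-line correction your proof is complete and coincides with the paper's.
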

\begin{proof}
    Let $f$ be any function from $\mf^{\rad,\dn}$.
    For every $x,\,y \in \rn$ such that $|y| > |x|$, we have that
    \begin{align*}
    (M_{\phi,X}f)(x) \gs \frac{\|f \chi_{B(0,|y|)}\|_{X}}{\phi (|B(0,|y|)|)}.
    \end{align*}
    Since $(f\chi_{B(0,|y|)})^*(t) = f^*(t) \chi_{[0,|B(0,|y|)|)}(t)$, $t > 0$, we get that
    \begin{align*}
    (M_{\phi,X}f)(x) \gs  \frac{\|f^* \chi_{[0,|B(0,|y|)|)}\|_{\bar{X}}}{\phi (|B(0,|y|)|)}.
    \end{align*}
    Hence
    \begin{align*}
    (M_{\phi,X}f)(x) \gs \sup_{|y| > |x|} \frac{\|f^* \chi_{[0,|B(0,|y|)|)}\|_{\bar{X}}}{\phi (|B(0,|y|)|)} = \sup_{|y| > |x|}\frac{\|f^* \chi_{[0,\omega_n |y|^n)}\|_{\bar{X}}}{\phi(\omega_n |y|^n)} = \sup_{\tau > \omega_n |x|^n} \frac{\|f^* \chi_{[0,\tau)}\|_{\bar{X}}}{\phi(\tau)},
    \end{align*}
    where $\omega_n$ is the Lebesgue measure of the unit ball in $\rn$.

    Recall that
    $$
    f^* (t) = \sup_{|E| = t} \einf_{x \in E} |f(x)|, \quad t \in (0,\infty),
    $$
    (see, for instance, \cite[p. 33]{chongrice}).

    On taking rearrangements, we obtain that
    \begin{align*}
    (M_{\phi,X}f)^* (t) & = \sup_{|E| = t} ~ \einf_{x\in E}~ (M_{\phi,X}f)(x) \\
    & \ge \einf_{x \in B(0,(t / \omega_n)^{1 / n})} (M_{\phi,X}f)(x) \\
    & \gs \einf_{x \in B(0,(t / \omega_n)^{1 / n})} \sup_{\tau > \omega_n |x|^n} \frac{\|f^* \chi_{[0,\tau)}\|_{\bar{X}}}{\phi(\tau)} \\
    & = \einf_{0 \le s < t} \sup_{\tau > s} \frac{\|f^* \chi_{[0,\tau)}\|_{\bar{X}}}{\phi(\tau)} \\
    & = \sup_{\tau > t} \frac{\|f^* \chi_{[0,\tau)}\|_{\bar{X}}}{\phi(\tau)}.
    \end{align*}
\end{proof}

Combining Lemmas  \ref{est.from.above} and \ref{est.from.below}, we
get the following statement.
\begin{theorem}\label{main.4.1}
    Let $0 < p,q < \infty$, $0 < r < \infty$. Assume that $\phi \in Q_r$ is a quasi-increasing function on $(0,\infty)$.
    Suppose that $X$ is a r.i. quasi-Banach function space    satisfying a lower $r$-estimate. Then:

    {\rm (a)} $M_{\phi, X}$ is bounded from $\Lambda^p(v)$ to $\Lambda^q(w)$, that is, the inequality
    $$
    \|M_{\phi, X}f\|_{\Lambda^q(w)} \le C \|f\|_{\Lambda^p(v)}
    $$
    holds for all $f \in \mp (\rn)$ if and only if the inequality
    \begin{equation*}
    \bigg( \int_0^{\infty} \bigg[ \sup_{\tau > t}  \frac{\|\psi \chi_{[0,\tau)}\|_{\bar X}}{\phi (\tau)}\bigg]^q w(t)\,dt \bigg)^{1/q} \le
    C \bigg( \int_0^{\infty} (\psi(t))^p v(t)\,dt \bigg)^{1/p}
    \end{equation*}
    holds for all $\psi \in \mp^+ ((0,\infty);\dn)$.

    {\rm (b)} $M_{\phi, X}$ is bounded from $\Lambda^p(v)$ to $\Lambda^{q,\infty}(w)$, that is, the inequality
    $$
    \|M_{\phi, X}f\|_{\Lambda^{q,\infty}(w)} \le C \|f\|_{\Lambda^p(v)}
    $$
    holds for all $f \in \mp (\rn)$ if and only if the inequality
    \begin{equation*}
    \sup_{t > 0 } (W(t))^{1/q} \sup_{\tau > t}  \frac{\|\psi \chi_{[0,\tau)}\|_{\bar X}}{\phi (\tau)}  \le C \bigg( \int_0^{\infty} (\psi(t))^p v(t)\,dt \bigg)^{1/p}
    \end{equation*}
    holds for all $\psi \in \mp^+ ((0,\infty);\dn)$.

    {\rm (c)} $M_{\phi, X}$ is bounded from $\Lambda^{p,\infty}(v)$ to $\Lambda^{q,\infty}(w)$, that is, the inequality
    $$
    \|M_{\phi, X}f\|_{\Lambda^{q,\infty}(w)} \le C \|f\|_{\Lambda^{p,\infty}(v)}
    $$
    holds for all $f \in \mp (\rn)$ if and only if the inequality
    \begin{equation*}
    \sup_{t > 0 } (W(t))^{1/q} \sup_{\tau > t}  \frac{\|\psi \chi_{[0,\tau)}\|_{\bar X}}{\phi (\tau)}  \le C \sup_{t > 0 } (V(t))^{1/q} \psi(t)
    \end{equation*}
    holds for all $\psi \in \mp^+ ((0,\infty);\dn)$.
\end{theorem}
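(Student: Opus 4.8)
The plan is to derive all three equivalences from the two-sided rearrangement estimates already in hand, namely the upper bound of Lemma~\ref{est.from.above} and the lower bound of Lemma~\ref{est.from.below}, so that each "if and only if" collapses to a bookkeeping exercise plus one test-function choice. First I would note that the hypotheses of the theorem make both lemmas applicable: since $\phi\in Q_r$ we have $\phi(2t)\le C(2\phi(t)^r)^{1/r}=2^{1/r}C\,\phi(t)$, so $\phi\in\Delta_2$, and $\phi$ is assumed quasi-increasing, while $X$ is a r.i. quasi-Banach function space satisfying a lower $r$-estimate. Consequently, for every measurable $f$ on $\rn$ and every $t>0$,
\begin{equation*}
c\,\sup_{\tau>t}\frac{\|f^*\chi_{[0,\tau)}\|_{\bar X}}{\phi(\tau)}\ \le\ (M_{\phi,X}f)^*(t)\ \le\ C\,\sup_{\tau>t}\frac{\|f^*\chi_{[0,\tau)}\|_{\bar X}}{\phi(\tau)},
\end{equation*}
where the left-hand inequality is valid at least for $f\in\mf^{\rad,\dn}(\rn)$.

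For the sufficiency in part (a): assuming the displayed inequality holds for all $\psi\in\mp^+(\I;\dn)$, take an arbitrary $f\in\mp(\rn)$, apply the upper estimate, and then substitute $\psi=f^*\in\mp^+(\I;\dn)$. Since $\|M_{\phi,X}f\|_{\Lambda^q(w)}=\|(M_{\phi,X}f)^*\|_{q,w,\I}$ and $\|f\|_{\Lambda^p(v)}=\|f^*\|_{p,v,\I}$, this immediately gives $\|M_{\phi,X}f\|_{\Lambda^q(w)}\le C\|f\|_{\Lambda^p(v)}$. Parts (b) and (c) run identically, replacing the target norm by $\|g\|_{\Lambda^{q,\infty}(w)}=\sup_{t>0}(W(t))^{1/q}g^*(t)$, and in (c) the domain norm by $\|f\|_{\Lambda^{p,\infty}(v)}=\sup_{t>0}(V(t))^{1/p}f^*(t)$.

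For the necessity: given $\psi\in\mp^+(\I;\dn)$, I would take the radially decreasing test function $\tilde\psi(x):=\psi(\omega_n|x|^n)$, with $\omega_n$ as in Lemma~\ref{est.from.below}; then $\tilde\psi\in\mf^{\rad,\dn}(\rn)$ and $\tilde\psi^{\,*}=\psi$ a.e. on $\I$. Applying the lower estimate of Lemma~\ref{est.from.below} to $f=\tilde\psi$, taking the $\|\cdot\|_{q,w,\I}$-norm (respectively the weak-type norm of the target in (b) and (c)) of both sides, and invoking the assumed boundedness of $M_{\phi,X}$ together with $\|\tilde\psi\|_{\Lambda^p(v)}=\|\psi\|_{p,v,\I}$ (respectively $\|\tilde\psi\|_{\Lambda^{p,\infty}(v)}=\sup_{t>0}(V(t))^{1/p}\psi(t)$), yields exactly the asserted inequality for $\psi$.

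There is no real obstacle here; the three points deserving a line of care are: the inclusion $Q_r\subset\Delta_2$, which is what lets Lemma~\ref{est.from.below} apply under the stated hypotheses; the elementary observation that every $\psi\in\mp^+(\I;\dn)$ is the non-increasing rearrangement of the radial function $\tilde\psi$; and keeping straight which estimate drives which implication — the upper bound of Lemma~\ref{est.from.above}, valid for all $f$, gives sufficiency, whereas the lower bound of Lemma~\ref{est.from.below}, valid only on $\mf^{\rad,\dn}(\rn)$, already suffices for necessity because the operator-norm inequality must in particular hold on these radial test functions.
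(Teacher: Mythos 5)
Your proposal is correct and is exactly the paper's argument: the paper proves Theorem~\ref{main.4.1} by the one-line remark that it follows from combining Lemmas~\ref{est.from.above} and~\ref{est.from.below}, and you have simply (and correctly) filled in the routine details — the upper estimate plus $\psi=f^*$ for sufficiency, the lower estimate on the radial test function $\tilde\psi$ with $\tilde\psi^*=\psi$ for necessity, and the observation $Q_r\subset\Delta_2$ needed to invoke Lemma~\ref{est.from.below}. (Your $(V(t))^{1/p}$ in part (c) also silently corrects what appears to be a typo, $(V(t))^{1/q}$, in the paper's statement.)
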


% % % % % % % % % % % % % % % % % % % % % % % % % % % % % % % % % % % % % % % % % % % % % % % % % % % % % % % % % % % % % %

\subsection{Boundedness of $M_{\phi, \Lambda^{\alpha}(b)}: \Lambda^p(v) \rightarrow \Lambda^q(w)$, $0 < p,\,q < \infty$}

\begin{theorem}\label{main}
    Let $0 < p,q < \infty$, $0 < \alpha \le r < \infty$ and $v,\,w \in \W\I$. Assume that $\phi \in Q_r$ is a quasi-increasing function. Moreover, assume that $b \in \W\I$ is such that $0 < B(t) < \infty$ for all $x > 0$, $B(\infty) = \infty$, $B \in \Delta_2$ and $B(t) / t^{\alpha / r}$ is quasi-increasing. Then $M_{\phi, \Lambda^{\alpha}(b)}$ is bounded from $\Lambda^p(v)$ to $\Lambda^q(w)$, that is, the inequality
    $$
    \|M_{\phi, \Lambda^{\alpha}(b)}\|_{\Lambda^q(w)} \le C \|f\|_{\Lambda^p(v)}
    $$
    holds for all $f \in \mp (\rn)$ if and only if the inequality
    \begin{equation}\label{Tub.thm.1.eq.2}
    \|T_{B / \phi^{\alpha},b} \psi \|_{q / {\alpha},w,\I} \le C^{\alpha} \| \psi \|_{p / {\alpha},v,\I}
    \end{equation}
    holds for all $\psi \in \mp^+ ((0,\infty);\dn)$.
\end{theorem}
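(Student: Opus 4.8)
The plan is to recognize the statement as the specialization of Theorem \ref{main.4.1}(a) to $X = \Lambda^{\alpha}(b)$, and then to convert the resulting supremum inequality into an inequality for $T_{B/\phi^{\alpha},b}$ by a power substitution. First I would check that $X = \Lambda^{\alpha}(b)$ fulfils the hypotheses of Theorem \ref{main.4.1}. Since $b \in \W\I$, $B \in \Delta_2$ and $B(\infty) = \infty$, the space $\Lambda^{\alpha}(b)$ is a rearrangement-invariant, order-continuous quasi-Banach function space, as recalled in Section \ref{pre}. Because $B(t)/t^{\alpha/r}$ is quasi-increasing and $r \ge \alpha$, Theorem \ref{KamMal} shows that $\Lambda^{\alpha}(b)$ satisfies a lower $r$-estimate. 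Finally $\phi \in Q_r$ forces $\phi \in \Delta_2$ (apply the $Q_r$-inequality with $2t = t + t$), and $\phi$ is quasi-increasing by assumption, so all the hypotheses of Theorem \ref{main.4.1}(a) --- equivalently, of Lemma \ref{est.from.above} and Lemma \ref{est.from.below} --- are in force.

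By Theorem \ref{main.4.1}(a), $M_{\phi,\Lambda^{\alpha}(b)}$ is bounded from $\Lambda^p(v)$ to $\Lambda^q(w)$ if and only if
$$
\bigg(\int_0^{\infty}\Big[\sup_{\tau > t}\frac{\|\psi\chi_{[0,\tau)}\|_{\overline{\Lambda^{\alpha}(b)}}}{\phi(\tau)}\Big]^q w(t)\,dt\bigg)^{1/q} \le C\bigg(\int_0^{\infty}\psi(t)^p v(t)\,dt\bigg)^{1/p}
$$
for all $\psi \in \mp^+((0,\infty);\dn)$. For such $\psi$ the truncation $\psi\chi_{[0,\tau)}$ is non-increasing, hence $(\psi\chi_{[0,\tau)})^{*} = \psi\chi_{[0,\tau)}$ and $\|\psi\chi_{[0,\tau)}\|_{\overline{\Lambda^{\alpha}(b)}} = \big(\int_0^{\tau}\psi^{\alpha}(y)b(y)\,dy\big)^{1/\alpha}$.

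Next I would substitute $\psi = \eta^{1/\alpha}$, which is a bijection of $\mp^+((0,\infty);\dn)$ onto itself. Under this substitution $\int_0^{\tau}\psi^{\alpha}b = \int_0^{\tau}\eta b$, $\psi^p = \eta^{p/\alpha}$, and
$$
\sup_{\tau > t}\frac{\big(\int_0^{\tau}\eta(y)b(y)\,dy\big)^{1/\alpha}}{\phi(\tau)} = \bigg(\sup_{\tau \ge t}\frac{B(\tau)/\phi(\tau)^{\alpha}}{B(\tau)}\int_0^{\tau}\eta(y)b(y)\,dy\bigg)^{1/\alpha} = \big((T_{B/\phi^{\alpha},b}\eta)(t)\big)^{1/\alpha},
$$
where replacing $\sup_{\tau > t}$ by $\sup_{\tau \ge t}$ is harmless since $\tau \mapsto \int_0^{\tau}\eta b$ is continuous and $\phi$ is quasi-increasing. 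Raising the inequality of the previous paragraph to the power $\alpha$ then turns it into $\|T_{B/\phi^{\alpha},b}\eta\|_{q/\alpha,w,\I} \le C^{\alpha}\|\eta\|_{p/\alpha,v,\I}$, which is \eqref{Tub.thm.1.eq.2} with $\eta$ in the role of $\psi$; since $\eta$ ranges over all of $\mp^+((0,\infty);\dn)$, this is exactly the asserted equivalence.

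I expect no genuine obstacle: the argument is a bookkeeping reduction built on the already-proved Lemmas \ref{est.from.above} and \ref{est.from.below} (via Corollary \ref{cor.main.1}). The two places that deserve a line of care are the verification that $\Lambda^{\alpha}(b)$ is an r.i.\ quasi-Banach function space satisfying a lower $r$-estimate --- this is precisely where $B \in \Delta_2$, $B(\infty) = \infty$ and the quasi-monotonicity of $B(t)/t^{\alpha/r}$ together with $r \ge \alpha$ enter, through Theorem \ref{KamMal} --- and the routine identification of the supremum term with $(T_{B/\phi^{\alpha},b}\eta)^{1/\alpha}$.
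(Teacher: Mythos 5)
Your proposal is correct and follows exactly the paper's route: the paper's proof is the one-line observation that the theorem is Theorem \ref{main.4.1}(a) specialized to $X=\Lambda^{\alpha}(b)$, and you have merely (and correctly) filled in the hypothesis check via Theorem \ref{KamMal} and the power substitution identifying the supremum with $(T_{B/\phi^{\alpha},b}\eta)^{1/\alpha}$.
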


\begin{proof}
The statement follows from Theorem \ref{main.4.1}, (a), when $X =
\Lambda^{\alpha}(b)$.
\end{proof}

\begin{theorem}
    Let $0 < p,q < \infty$, $0 < \alpha \le r < \infty$ and $v,\,w \in \W\I$. Assume that $\phi \in Q_r$ is a quasi-increasing function. Moreover, assume that $b \in \W\I$ is such that $0 < B(t) < \infty$ for all $x > 0$, $B(\infty) = \infty$, $B \in \Delta_2$ and $B(t) / t^{\alpha / r}$ is quasi-increasing. Then $M_{\phi, \Lambda^{\alpha}(b)}$ is bounded from $\Lambda^p(v)$ to $\Lambda^q(w)$ if and only if:

    {\rm (i)} $\alpha < p \le q$, and in this case $c \ap {\mathcal A}_1 + {\mathcal A}_2$, where
    \begin{align*}
    {\mathcal A}_1: & = \sup_{x > 0}\bigg(  \phi^{-q}(x)  W(x) + \int_x^{\infty}   \phi^{-q}(t) w(t)\,dt\bigg)^{\frac{1}{q}}\bigg(\int_0^x \bigg(\frac{B(y)}{V(y)}\bigg)^{\frac{p}{p - \alpha}}v(y)\,dy\bigg)^{\frac{p - \alpha}{p\alpha}}, \\
    {\mathcal A}_2: &  = \sup_{x > 0}\bigg( \bigg[\sup_{x \le \tau < \infty} \frac{B(\tau)}{\phi^{\alpha}(\tau)V^2(\tau)}\bigg]^{\frac{q}{\alpha}}  W(x) + \int_x^{\infty}  \bigg[\sup_{t \le \tau < \infty} \frac{B(\tau)}{\phi^{\alpha}(\tau)V^2(\tau)}\bigg]^{\frac{q}{\alpha}} w(t)\,dt\bigg)^{\frac{1}{q}}\bigg(\int_0^x V^{\frac{p}{p -\alpha}}v\bigg)^{\frac{p-\alpha}{p\alpha}};
    \end{align*}

    {\rm (ii)} $\alpha = p \le q$, and in this case $c \ap {\mathcal B}_1 + {\mathcal B}_2$, where
    \begin{align*}
    {\mathcal B}_1: & = \sup_{x > 0}\bigg(  \phi^{-q}(x) W(x) + \int_x^{\infty}   \phi^{-q}(t) w(t)\,dt\bigg)^{1 / q}\bigg(\sup_{0 < y \le x} \frac{B(y)}{V(y)}\bigg)^{\frac{1}{\a}}, \\
    {\mathcal B}_2: &  = \sup_{x > 0}\bigg( \bigg[\sup_{x \le \tau < \infty} \frac{B(\tau)}{\phi^{\alpha}(\tau)V^2(\tau)}\bigg]^{\frac{q}{\alpha}}  W(x) + \int_x^{\infty}  \bigg[\sup_{t \le \tau < \infty} \frac{B(\tau)}{\phi^{\alpha}(\tau)V^2(\tau)}\bigg]^{\frac{q}{\alpha}} w(t)\,dt\bigg)^{\frac{1}{q}}V^{\frac{1}{\a}}(x);
    \end{align*}

    {\rm (iii)} $\alpha < p$ and $q < p$,  and in this case $c \ap {\mathcal C}_1 + {\mathcal C}_2 + {\mathcal C}_3 + {\mathcal C}_4$, where
    \begin{align*}
    {\mathcal C}_1: & = \bigg(\int_0^{\infty} \bigg(\int_x^{\infty}   \phi^{-q}(t) w(t)\,dt\bigg)^{\frac{q}{p-q}}  \phi^{-q}(x) \bigg(\int_0^x \bigg(\frac{B(y)}{V(y)}\bigg)^{\frac{p}{p-\alpha}}v(y)\,dy\bigg)^{\frac{q (p - \alpha)}{\alpha (p - q)}} w(x)\,dx \bigg)^{\frac{p-q}{pq}}, \\
    {\mathcal C}_2: & = \bigg(\int_0^{\infty} W^{\frac{q}{p-q}}(x) \bigg[\sup_{x \le \tau < \infty}  \phi^{-\a}(\tau) \bigg(\int_0^{\tau} \bigg(\frac{B(y)}{V(y)}\bigg)^{\frac{p}{p-\alpha}}v(y)\,dy\bigg)^{\frac{p - \alpha}{p}} \bigg]^{\frac{pq}{\a(p-q)}}  w(x)\,dx \bigg)^{\frac{p-q}{pq}},\\
    {\mathcal C}_3: & = \bigg(\int_0^{\infty} \bigg(\int_x^{\infty}  \bigg[\sup_{t \le \tau < \infty} \frac{B(\tau)}{\phi^{\alpha}(\tau)V^2(\tau)}\bigg]^{\frac{q}{\alpha}}  w(t)\,dt\bigg)^{\frac{q}{p-q}} \bigg[\sup_{x \le \tau < \infty} \frac{B(\tau)}{\phi^{\alpha}(\tau)V^2(\tau)}\bigg]^{\frac{q}{\alpha}} \bigg(\int_0^x V^{\frac{p}{p-\alpha}}v\bigg)^{\frac{q(p-\alpha)}{\alpha (p-q)}} w(x)\,dx \bigg)^{\frac{p-q}{pq}}, \\
    {\mathcal C}_4: & = \bigg(\int_0^{\infty} W^{\frac{q}{p-q}}(x) \bigg[\sup_{x \le \tau < \infty} \bigg[\sup_{\tau \le y < \infty} \frac{B(y)}{\phi^{\alpha}(y)V^2(y)}\bigg] \bigg(\int_0^{\tau} V^{\frac{p}{p-\alpha}}v\bigg)^{\frac{p-\alpha}{p}} \bigg]^{\frac{pq}{\alpha (p-q)}}  w(x)\,dx \bigg)^{\frac{p-q}{pq}};
    \end{align*}

    {\rm (iv)} $q < \alpha = p$,  and in this case $c \ap {\mathcal D}_1 + {\mathcal D}_2 + {\mathcal D}_3 + {\mathcal D}_4$, where
    \begin{align*}
    {\mathcal D}_1: & = \bigg(\int_0^{\infty} \bigg(\int_x^{\infty}   \phi^{-q}(t) w(t)\,dt\bigg)^{\frac{q}{p-q}}  \phi^{-q}(x)\, \bigg(\sup_{0 < y \le x} \frac{B(y)}{V(y)}\bigg)^{\frac{pq}{\alpha (p-q)}} w(x)\,dx \bigg)^{\frac{p-q}{pq}}, \\
    {\mathcal D}_2: & = \bigg(\int_0^{\infty} W^{\frac{q}{p-q}}(x) \bigg[\sup_{x \le \tau < \infty}  \phi^{-\a}(\tau) \bigg(\sup_{0 < y \le \tau} \frac{B(y)}{V(y)}\bigg) \bigg]^{\frac{pq}{\a (p-q)}}  w(x)\,dx \bigg)^{\frac{p-q}{pq}},\\
    {\mathcal D}_3: & = \bigg(\int_0^{\infty} \bigg(\int_x^{\infty}  \bigg[\sup_{t \le \tau < \infty} \frac{B(\tau)}{\phi^{\a}(\tau)V^2(\tau)}\bigg]^{\frac{q}{\a}}  w(t)\,dt\bigg)^{\frac{q}{p-q}} \bigg[\sup_{x \le \tau < \infty} \frac{B(\tau)}{\phi^{\a}(\tau)V^2(\tau)}\bigg]^{\frac{q}{\a}} V^{\frac{pq}{\a (p-q)}}(x) w(x)\,dx \bigg)^{\frac{p-q}{pq}}, \\
    {\mathcal D}_4: & = \bigg(\int_0^{\infty} W^{\frac{q}{p-q}}(x) \bigg[\sup_{x \le \tau < \infty} \bigg[\sup_{\tau \le y < \infty} \frac{B(y)}{\phi^{\a}(y)V^2(y)}\bigg] V(\tau) \bigg]^{\frac{pq}{\a (p-q)}}  w(x)\,dx \bigg)^{\frac{p-q}{pq}};
    \end{align*}

    {\rm (v)} $p \le \a$, $p \le q$ and in this case
    $c \ap {\mathcal E}_1 + {\mathcal E}_2$, where
    \begin{align*}
    {\mathcal E}_1: & = \sup_{x > 0} \bigg(  \phi^{-q}(x) W(x) + \int_x^{\infty}  \phi^{-q}(t) w(t)\,dt\bigg)^{\frac{1}{q}} \sup_{0 < y \le x} \frac{B^{\frac{1}{\a}}(y)}{V^{\frac{1}{p}}(y)}, \\
    {\mathcal E}_2: & = \sup_{x > 0}\bigg( \bigg[ \sup_{x \le y < \infty} \frac{B^{\frac{1}{\a}}(y)}{\phi(y)V^{\frac{2}{p}}(y)}\bigg]^{q}  W(x) + \int_x^{\infty}  \bigg[ \sup_{t \le y < \infty} \frac{B^{\frac{1}{\a}}(y)}{\phi(y)V^{\frac{2}{p}}(y)}\bigg]^{q} w(t)\,dt\bigg)^{\frac{1}{q}} V^{\frac{1}{p}}(x);
    \end{align*}

    {\rm (vi)} $p \le \a$, $q < p$,  and in this case $c \ap {\mathcal F}_1 + {\mathcal F}_2 + {\mathcal F}_3 + {\mathcal F}_4$, where
    \begin{align*}
    {\mathcal F}_1: & = \bigg(\int_0^{\infty} W^{\frac{q}{p-q}}(x) \bigg[\sup_{x \le \tau < \infty}  \phi^{-q}(\tau)  \bigg( \sup_{0 < y \le \tau} \frac{B(y)}{V^{\frac{\a}{p}}(y)} \bigg)\bigg]^{\frac{pq}{\a (p-q)}} w(x)\,dx \bigg)^{\frac{p-q}{pq}}, \\
    {\mathcal F}_2: & = \bigg(\int_0^{\infty} \bigg(\int_x^{\infty}  \phi^{-q}(t) w(t)\,dt\bigg)^{\frac{q}{p-q}} \bigg[\sup_{0 < \tau \le x} \frac{B(\tau)}{V^{\frac{\a}{p}}(\tau)}\bigg]^{\frac{pq}{\a (p-q)}} \phi^{-q}(x)w(x)\,dx \bigg)^{\frac{p-q}{pq}}, \\
    {\mathcal F}_3: & = \bigg( \int_0^{\infty} W^{\frac{q}{p-q}}(x) \bigg(  \sup_{x \le \tau < \infty}\bigg[\sup_{\tau \le y < \infty} \frac{B^{\frac{1}{\a}}(y)}{\phi (y)V^{\frac{2}{p}}(y)}\bigg] V^{\frac{1}{p}}(\tau)\bigg)^{\frac{pq}{p-q}} w(x)\,dx \bigg)^{\frac{p-q}{pq}}, \\
    {\mathcal F}_4: & = \bigg(\int_0^{\infty} \bigg( \int_x^{\infty}\bigg[\sup_{t \le y < \infty} \frac{B^{\frac{1}{\a}}(y)}{\phi (y)V^{\frac{2}{p}}(y)}\bigg]^{q} w(t)\,dt\bigg)^{\frac{q}{p-q}}
    \bigg[\sup_{x \le y < \infty} \frac{B^{\frac{1}{\a}}(y)}{\phi (y)V^{\frac{2}{p}}(y)}\bigg]^{q} V^{\frac{q}{p-q}}(x)w(x)\,dx \bigg)^{\frac{p-q}{pq}}.
    \end{align*}
\end{theorem}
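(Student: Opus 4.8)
The plan is to obtain the theorem by chaining two results already at our disposal. By the reduction Theorem \ref{main}, the operator $M_{\phi,\Lambda^{\alpha}(b)}$ is bounded from $\Lambda^{p}(v)$ into $\Lambda^{q}(w)$, with best constant $c$, if and only if
$$\|T_{B/\phi^{\alpha},\,b}\,\psi\|_{q/\alpha,\,w,\,\I}\;\le\;c^{\alpha}\,\|\psi\|_{p/\alpha,\,v,\,\I},\qquad \psi\in\mp^{+}((0,\infty);\dn);$$
and Theorem \ref{Tub.thm.1} characterizes exactly this weighted inequality. Thus the proof reduces to applying Theorem \ref{Tub.thm.1} with the weight $u:=B/\phi^{\alpha}$ and with the roles of $p$ and $q$ there played by $p/\alpha$ and $q/\alpha$ (while $b$, $v$, $w$ stay unchanged), and then expressing the resulting conditions in terms of the original data. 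Under this substitution the six regimes in the statement match the six cases of Theorem \ref{Tub.thm.1}: $\alpha<p$, $\alpha=p$, $p\le\alpha$ correspond to $p/\alpha>1$, $=1$, $\le 1$ respectively, while $q<p$ versus $q\ge p$ corresponds to $q/\alpha<p/\alpha$ versus $\ge$; moreover $(p/\alpha)'=p/(p-\alpha)$, and the index $r$ of Convention \ref{Notat.and.prelim.conv.1.1}(iii) becomes $pq/(\alpha(p-q))$. One small preliminary: since $\phi\in Q_{r}$ forces, together with the quasi-increasing assumption, $\phi(2t)\approx\phi(t)$, we may replace $\phi$ by an equivalent continuous function, which changes neither $M_{\phi,\Lambda^{\alpha}(b)}$ nor any of the quantities below by more than a multiplicative constant, so that $u=B/\phi^{\alpha}\in\W\I\cap C\I$ as Theorem \ref{Tub.thm.1} requires.

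The computational core is a single simplification used throughout. For $u=B/\phi^{\alpha}$ one has $u(\tau)/B(\tau)=\phi^{-\alpha}(\tau)$, and the quasi-increasing property of $\phi$ gives $\sup_{x\le\tau<\infty}\phi^{-\alpha}(\tau)\approx\phi^{-\alpha}(x)$ and, more generally, makes all the nested suprema of $\phi^{-\alpha}$ occurring in the quantities $A_{i},\dots,F_{i}$ collapse; on the other hand $u(\tau)/V^{2}(\tau)=B(\tau)/(\phi^{\alpha}(\tau)V^{2}(\tau))$, which is the form that remains in the ``$V^{2}$'' terms. Substituting these into $A_{1},A_{2}$, $B_{1},B_{2}$ and $E_{1},E_{2}$, then taking $\alpha$-th roots and using $(a_{1}+a_{2})^{1/\alpha}\approx a_{1}^{1/\alpha}+a_{2}^{1/\alpha}$ together with $(\sup g)^{1/\alpha}=\sup g^{1/\alpha}$, turns these directly into ${\mathcal A}_{1},{\mathcal A}_{2}$, ${\mathcal B}_{1},{\mathcal B}_{2}$, ${\mathcal E}_{1},{\mathcal E}_{2}$, settling cases (i), (ii), (v). The same substitution fed into $C_{1},\dots,C_{4}$, $D_{1},\dots,D_{4}$, $F_{1},\dots,F_{4}$ is meant to produce ${\mathcal C}_{j},{\mathcal D}_{j},{\mathcal F}_{j}$ in the cases $q<p$.

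I expect the genuine work to sit in those cases $q<p$, where the conditions of Theorem \ref{Tub.thm.1} carry the ``double supremum with a truncated integral'' quantities $C_{2},C_{4}$, $D_{2},D_{4}$, $F_{1},F_{3}$. The substitution $u=B/\phi^{\alpha}$ does not by itself display the tidy shape of ${\mathcal C}_{2},{\mathcal C}_{4},{\mathcal D}_{2},{\mathcal D}_{4},{\mathcal F}_{1},{\mathcal F}_{3}$: one still has to rearrange the nested suprema — splitting each inner integral $\int_{0}^{\tau}$ as $\int_{0}^{x}+\int_{x}^{\tau}$ and absorbing the growing part into the companion term, using the monotonicity of the truncated integrals and the $\Delta_{2}$-property of $\phi$ — so that the full sums $\sum_{j}C_{j}$, $\sum_{j}D_{j}$, $\sum_{j}F_{j}$ are shown equivalent to $\big(\sum_{j}{\mathcal C}_{j}\big)^{\alpha}$, $\big(\sum_{j}{\mathcal D}_{j}\big)^{\alpha}$, $\big(\sum_{j}{\mathcal F}_{j}\big)^{\alpha}$. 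Carrying out this reorganization, and checking that the ranges $p>\alpha$, $p=\alpha$, $p<\alpha$ (with the overlap on the boundary $p=\alpha$) give consistent answers, is the only part that needs care; everything else is a direct quotation of Theorems \ref{main} and \ref{Tub.thm.1}.
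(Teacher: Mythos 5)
Your proposal is correct and follows exactly the route the paper takes: its entire proof is the one-line observation that the statement follows from Theorems \ref{main} and \ref{Tub.thm.1}, i.e.\ the reduction to \eqref{Tub.thm.1.eq.2} followed by an application of Theorem \ref{Tub.thm.1} with $u=B/\phi^{\alpha}$ and exponents $p/\alpha$, $q/\alpha$. Your additional remarks (regularizing $\phi$ so that $u\in C\I$, the collapse $\sup_{x\le\tau<\infty}\phi^{-\alpha}(\tau)\approx\phi^{-\alpha}(x)$, and the bookkeeping in the $q<p$ cases) are exactly the details the paper leaves implicit.
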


\begin{proof}
    The statement follows from Theorems \ref{main} and \ref{Tub.thm.1}.
\end{proof}

% % % % % % % % % % % % % % % % % % % % % % % % % % % % % % % % % % % % % % % % % % % % % % % % % % % % % % % % % % % % % % % %

\subsection{Boundedness of $M_{\phi, \Lambda^{\alpha}(b)}: \Lambda^p(v) \rightarrow \Lambda^{q,\infty}(w)$, $0 < p,\,q < \infty$}

\begin{theorem}\label{main*}
    Let $0 < p,q < \infty$, $0 < \alpha \le r < \infty$ and $v,\,w \in \W\I$. Assume that $\phi \in Q_r$ is a quasi-increasing function. Moreover, assume that $b \in \W\I$ is such that $0 < B(t) < \infty$ for all $x > 0$, $B(\infty) = \infty$, $B \in \Delta_2$ and $B(t) / t^{\alpha / r}$ is quasi-increasing. Then $M_{\phi, \Lambda^{\alpha}(b)}$ is bounded from $\Lambda^p(v)$ to $\Lambda^{q,\infty}(w)$, that is, the inequality
    $$
    \|M_{\phi, \Lambda^{\alpha}(b)}\|_{\Lambda^{q,\infty}(w)} \le C \|f\|_{\Lambda^p(v)}
    $$
    holds for all $f \in \mp (\rn)$ if and only if the inequality
    \begin{equation}\label{Tub.thm.1.eq.2*}
    \|T_{B / \phi^{\alpha},b} \psi \|_{\infty,W^{\alpha /q},\I} \le C^{\alpha} \| \psi \|_{p / {\alpha},v,\I}
    \end{equation}
    holds for all $\psi \in \mp^+ ((0,\infty);\dn)$.
\end{theorem}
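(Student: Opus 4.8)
The plan is to derive this characterization directly from part~(b) of Theorem~\ref{main.4.1} applied with $X=\Lambda^{\alpha}(b)$, in complete parallel with the way Theorem~\ref{main} follows from part~(a). First I would check that the hypotheses of Theorem~\ref{main.4.1} are met here. Since $b\in\W\I$ satisfies $0<B(t)<\infty$, $B(\infty)=\infty$ and $B\in\Delta_2$, the classical Lorentz space $\Lambda^{\alpha}(b)$ is a rearrangement-invariant quasi-Banach function space; and since $\alpha\le r$ and $B(t)/t^{\alpha/r}$ is quasi-increasing, Theorem~\ref{KamMal} guarantees that $\Lambda^{\alpha}(b)$ satisfies a lower $r$-estimate. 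Together with the assumption that $\phi\in Q_r$ is quasi-increasing, this places us exactly in the situation of Theorem~\ref{main.4.1}, part~(b), which asserts that $M_{\phi,\Lambda^{\alpha}(b)}$ is bounded from $\Lambda^p(v)$ into $\Lambda^{q,\infty}(w)$ if and only if
$$
\sup_{t>0}(W(t))^{1/q}\,\sup_{\tau>t}\frac{\|\psi\chi_{[0,\tau)}\|_{\overline{\Lambda^{\alpha}(b)}}}{\phi(\tau)}\le C\Big(\int_0^{\infty}(\psi(t))^p v(t)\,dt\Big)^{1/p}
$$
for all $\psi\in\mp^+((0,\infty);\dn)$.

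The second step is to recast this condition in the form \eqref{Tub.thm.1.eq.2*}. Because $\psi$ is non-increasing, $\psi\chi_{[0,\tau)}$ coincides with its own non-increasing rearrangement, so $\|\psi\chi_{[0,\tau)}\|_{\overline{\Lambda^{\alpha}(b)}}=\big(\int_0^{\tau}\psi(y)^{\alpha}b(y)\,dy\big)^{1/\alpha}$. I would then substitute $h=\psi^{\alpha}$, which is a bijection of $\mp^+((0,\infty);\dn)$ onto itself, and raise the whole inequality to the power $\alpha$. Using the elementary identity $\dfrac{(B/\phi^{\alpha})(\tau)}{B(\tau)}=\phi^{-\alpha}(\tau)$, the left-hand side turns into
$$
\sup_{t>0}(W(t))^{\alpha/q}\,\sup_{t\le\tau<\infty}\frac{1}{\phi^{\alpha}(\tau)}\int_0^{\tau}h(y)b(y)\,dy=\big\|T_{B/\phi^{\alpha},b}h\big\|_{\infty,W^{\alpha/q},\I},
$$
while the right-hand side becomes $C^{\alpha}\|h\|_{p/\alpha,v,\I}$. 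Relabelling $h$ as $\psi$ produces precisely \eqref{Tub.thm.1.eq.2*}, and since every step is reversible the chain of equivalences runs backwards verbatim, matching the constants as stated.

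I do not anticipate a genuine obstacle: the entire analytic content is already packaged in Theorem~\ref{main.4.1} (which itself rests on the two-sided pointwise bounds of Lemmas~\ref{est.from.above} and~\ref{est.from.below}, the lower bound being the place where one passes to radially decreasing functions), and what remains is bookkeeping. The only mild points of care are verifying that the essential supremum defining $\|\cdot\|_{\infty,W^{\alpha/q},\I}$ agrees with the pointwise supremum displayed above — which is automatic here since $W$ is absolutely continuous and $T_{B/\phi^{\alpha},b}h$ is non-increasing, hence has at most countably many discontinuities — and tracking the exponents consistently when moving between $\psi$ and $h=\psi^{\alpha}$ and raising both sides to the power $\alpha$. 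Once \eqref{Tub.thm.1.eq.2*} is established, Theorem~\ref{Tub.thm.2} can be invoked in a subsequent result to convert it into explicit conditions on the weights $v$ and $w$.
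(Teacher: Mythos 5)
Your proposal is correct and follows exactly the paper's route: the paper's entire proof is the observation that the statement follows from Theorem~\ref{main.4.1}, part (b), with $X=\Lambda^{\alpha}(b)$, and you have simply written out the routine verification (the lower $r$-estimate via Theorem~\ref{KamMal}, the identification of $\|\psi\chi_{[0,\tau)}\|_{\overline{\Lambda^{\alpha}(b)}}$ for non-increasing $\psi$, and the substitution $h=\psi^{\alpha}$) that the paper leaves implicit.
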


\begin{proof}
The statement follows from Theorem \ref{main.4.1}, (b), when $X = \Lambda^{\alpha}(b)$.
\end{proof}

\begin{theorem}
    Let $0 < p,q < \infty$, $0 < \alpha \le r < \infty$ and $v,\,w \in \W\I$. Assume that $\phi \in Q_r$ is a quasi-increasing function. Moreover, assume that $b \in \W\I$ is such that $0 < B(t) < \infty$ for all $x > 0$, $B(\infty) = \infty$, $B \in \Delta_2$ and $B(t) / t^{\alpha / r}$ is quasi-increasing. Then $M_{\phi, \Lambda^{\alpha}(b)}$ is bounded from $\Lambda^p(v)$ to $\Lambda^{q,\infty}(w)$ if and only if:

    {\rm (i)} $\alpha \le p$, and in this case $c \ap {\mathcal G}_1 + {\mathcal G}_2$, where
    \begin{align*}
    {\mathcal G}_1: & = \sup_{x > 0}\bigg[  \sup_{x \le t < \infty} \frac{W^{\frac{1}{q}}(t)}{\phi(t)}\bigg]\bigg(\int_0^x \bigg(\frac{B(y)}{V(y)}\bigg)^{\frac{p}{p - \alpha}}v(y)\,dy\bigg)^{\frac{p - \alpha}{p\alpha}}, \\
    {\mathcal G}_2: &  = \sup_{x > 0}\bigg[  \sup_{x \le t < \infty} \frac{W^{\frac{1}{q}}(t)B^{\frac{1}{\alpha}}(t)}{\phi(t)V^{\frac{2}{\alpha}}(t)}\bigg]\bigg(\int_0^x V^{\frac{p}{p -\alpha}}v\bigg)^{\frac{p-\alpha}{p\alpha}};
    \end{align*}

    {\rm (ii)} $p < \alpha$, and in this case $c \ap {\mathcal H}_1 + {\mathcal H}_2$, where
    \begin{align*}
    {\mathcal H}_1: & = \sup_{x > 0} \bigg( \sup_{0 < y \le x} B^{\frac{1}{\alpha}} (y)\bigg[  \sup_{y \le t < \infty} \frac{W^{\frac{1}{q}}(t)}{\phi(t)}\bigg]\bigg) V^{- \frac{1}{p}}(x), \\
    {\mathcal H}_2: &  = \sup_{x > 0}\bigg[  \sup_{x \le t < \infty} \frac{W^{\frac{1}{q}}(t)}{\phi(t)}\bigg] \frac{B^{\frac{1}{\alpha}}(x)}{V^{\frac{1}{p}}(x)}.
    \end{align*}
\end{theorem}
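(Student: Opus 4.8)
The plan is to feed the reduction of Theorem~\ref{main*} into the explicit description of the reduced inequality supplied by Theorem~\ref{Tub.thm.2}. By Theorem~\ref{main*}, the operator $M_{\phi,\Lambda^{\alpha}(b)}$ is bounded from $\Lambda^p(v)$ to $\Lambda^{q,\infty}(w)$ with best constant $C$ if and only if inequality~\eqref{Tub.thm.1.eq.2*}, i.e.
$$
\|T_{B/\phi^{\alpha},b}\psi\|_{\infty,W^{\alpha/q},\I}\le C^{\alpha}\,\|\psi\|_{p/\alpha,v,\I},
$$
holds for all $\psi\in\mp^+((0,\infty);\dn)$, the number $C^{\alpha}$ here being comparable to the least constant for which this inequality is valid. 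Hence it suffices to characterize this weak-type supremum inequality and then take $\alpha$-th roots, using $C\ap(A+B)^{1/\alpha}\ap A^{1/\alpha}+B^{1/\alpha}$.

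Before invoking Theorem~\ref{Tub.thm.2} I would dispose of a hypothesis check: that theorem requires the inner weight $B/\phi^{\alpha}$ and the outer weight $W^{\alpha/q}$ to be continuous. The latter is automatic since $W$ is continuous. For the former, $\phi\in Q_r$ forces $\phi\in\Delta_2$, and this together with the assumed quasi-monotonicity gives $\phi(t)\ap\widetilde\phi(t):=t^{-1}\int_0^t\phi$, which is continuous; since then $T_{B/\phi^{\alpha},b}\ap T_{B/\widetilde\phi^{\alpha},b}$, one may assume $\phi$ continuous without loss of generality. (Alternatively, in each of $\mathcal G_i$ and $\mathcal H_i$ the function $\phi$ occurs only inside suprema of the type $\sup_{t\ge x}(\cdots/\phi(t))$, so replacing $\phi$ by an equivalent function perturbs these quantities by at most fixed multiplicative constants.)

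Now apply Theorem~\ref{Tub.thm.2} with $p$ replaced by $p/\alpha$, $u$ by $B/\phi^{\alpha}$, $b$ by $b$, $v$ by $v$, and $w$ by $W^{\alpha/q}$; note that $\int_0^{\cdot}b$ and $\int_0^{\cdot}v$ are unchanged by this substitution, so $B$ and $V$ keep their meaning. Two observations make the rest routine: $W^{\alpha/q}$ is non-decreasing, so $\sup_{0<\tau\le t}W^{\alpha/q}(\tau)=W^{\alpha/q}(t)$; and $u(t)/B(t)=\phi^{-\alpha}(t)$. With these, the quantities $G_1,G_2$ from Theorem~\ref{Tub.thm.2}(i) (resp. $H_1,H_2$ from Theorem~\ref{Tub.thm.2}(ii)) collapse to expressions built only from $\phi^{-\alpha}$, $W^{\alpha/q}$, $B$, $V$ and $v$. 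It then remains to raise everything to the power $1/\alpha$ and to simplify the exponents via $(p/\alpha)'=p/(p-\alpha)$, $1/(p/\alpha)'=(p-\alpha)/p$, $1/(p/\alpha)=\alpha/p$; a direct computation identifies $G_i^{1/\alpha}$ with $\mathcal G_i$ and $H_i^{1/\alpha}$ with $\mathcal H_i$.

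Finally I would reconcile the case split. Theorem~\ref{Tub.thm.2} branches according to $p/\alpha>1$ versus $p/\alpha\le1$, i.e. according to $\alpha<p$ versus $\alpha\ge p$, while the statement assigns the endpoint $\alpha=p$ to case~(i). For $\alpha<p$ one is in Theorem~\ref{Tub.thm.2}(i) and obtains $c\ap\mathcal G_1+\mathcal G_2$; for $\alpha>p$ one is in case~(ii) and obtains $c\ap\mathcal H_1+\mathcal H_2$; for $\alpha=p$ the reduced inequality has exponent $p/\alpha=1$, and one checks, reading the degenerate exponents $(p-\alpha)/(p\alpha)=0$ and $p/(p-\alpha)=\infty$ through Convention~\ref{Notat.and.prelim.conv.1.1}, that the corresponding condition is equivalent to $\mathcal G_1+\mathcal G_2$, so either description is valid at that endpoint. (In case~(ii) the term $\mathcal H_2$ is in fact redundant, since taking $y=x$ in the inner supremum of $\mathcal H_1$ gives $\mathcal H_2\ls\mathcal H_1$.) The only real content beyond a mechanical appeal to Theorems~\ref{main*} and~\ref{Tub.thm.2} is the clerical matching of the explicit quantities after the substitution and the $\alpha$-th root, together with the continuity reduction above; I expect the endpoint $\alpha=p$ to be the one place that needs a moment's care.
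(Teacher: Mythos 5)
Your proposal follows exactly the paper's route: the paper's entire proof is the one-line observation that the result follows by combining Theorem~\ref{main*} with Theorem~\ref{Tub.thm.2}, which is precisely your plan. Your additional care about the continuity hypothesis on $B/\phi^{\alpha}$, the exponent bookkeeping under the substitution $p\mapsto p/\alpha$, and the placement of the endpoint $\alpha=p$ (which Theorem~\ref{Tub.thm.2} assigns to the branch $p/\alpha\le 1$ while the statement assigns it to case~(i)) goes beyond what the paper records and is correctly resolved.
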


\begin{proof}
The statement follows by Theorems \ref{main*} and \ref{Tub.thm.2}.
\end{proof}

\subsection{Boundedness of $M_{\phi, \Lambda^{\alpha}(b)}: \Lambda^{p,\infty}(v) \rightarrow \Lambda^{q,\infty}(w)$, $0 < p,\,q < \infty$}

\begin{theorem}\label{main.3}
    Let $0 < p,q < \infty$, $0 < \alpha \le r < \infty$ and $v,\,w \in \W\I$. Assume that $\phi \in Q_r$ is a quasi-increasing function. Moreover, assume that $b \in \W\I$ is such that $0 < B(t) < \infty$ for all $x > 0$, $B(\infty) = \infty$, $B \in \Delta_2$ and $B(t) / t^{\alpha / r}$ is quasi-increasing. Then $M_{\phi, \Lambda^{\alpha}(b)}$ is bounded from $\Lambda^p(v)$ to $\Lambda^{q,\infty}(w)$, that is, the inequality
    \begin{equation*}
    \|M_{\phi, \Lambda^{\alpha}(b)}\|_{\Lambda^{q,\infty}(w)} \le C \|f\|_{\Lambda^{p,\infty}(v)}
    \end{equation*}
    holds for all $f \in \mp (\rn)$ if and only if the inequality
    \begin{equation*}
    \|T_{B / \phi^{\alpha},b} \psi \|_{\infty,W^{\alpha /q},\I} \le C^{\alpha} \| \psi \|_{\infty,V^{\alpha / p},\I}
    \end{equation*}
    holds for all $\psi \in \mp^+ ((0,\infty);\dn)$.
\end{theorem}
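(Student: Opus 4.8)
The plan is to deduce the claim directly from Theorem \ref{main.4.1}, part (c), applied with $X = \Lambda^{\alpha}(b)$, mirroring the way Theorems \ref{main} and \ref{main*} follow from parts (a) and (b). First I would verify that the hypotheses of Theorem \ref{main.4.1} hold in the present setting. Since $\phi \in Q_r$ is quasi-increasing by assumption, it suffices to note that $\Lambda^{\alpha}(b)$ is a r.i. quasi-Banach function space satisfying a lower $r$-estimate: the former is guaranteed by $B \in \Delta_2$ together with $B(\infty) = \infty$ (cf. the remark preceding Theorem \ref{KamMal}), while the lower $r$-estimate is precisely the equivalence in Theorem \ref{KamMal}, whose condition (ii) --- $B(t)/t^{\alpha/r}$ quasi-increasing and $r \ge \alpha$ --- is among our assumptions.

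It then remains to recast the condition supplied by Theorem \ref{main.4.1}(c), namely
$$
\sup_{t>0}(W(t))^{1/q}\,\sup_{\tau>t}\frac{\|\psi\chi_{[0,\tau)}\|_{\overline{\Lambda^{\alpha}(b)}}}{\phi(\tau)} \le C\,\|\psi\|_{\Lambda^{p,\infty}(v)} \qquad (\psi \in \mp^+ ((0,\infty);\dn)),
$$
in the form displayed in the statement. For non-increasing $\psi$ the truncation $\psi\chi_{[0,\tau)}$ is itself non-increasing, so $(\psi\chi_{[0,\tau)})^* = \psi\chi_{[0,\tau)}$ and hence $\|\psi\chi_{[0,\tau)}\|_{\overline{\Lambda^{\alpha}(b)}} = \bigl(\int_0^{\tau}\psi^{\alpha}b\bigr)^{1/\alpha}$. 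Using the definition of $T_{u,b}$ with $u = B/\phi^{\alpha}$ (the factor $B(\tau)$ cancels) this yields the pointwise identity
$$
\sup_{\tau>t}\frac{\|\psi\chi_{[0,\tau)}\|_{\overline{\Lambda^{\alpha}(b)}}}{\phi(\tau)} = \Bigl(\sup_{\tau>t}\frac{1}{\phi^{\alpha}(\tau)}\int_0^{\tau}\psi^{\alpha}b\Bigr)^{1/\alpha} = \bigl((T_{B/\phi^{\alpha},b}\,\psi^{\alpha})(t)\bigr)^{1/\alpha}.
$$

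Inserting this into the inequality above, raising both sides to the power $\alpha$, and putting $\eta := \psi^{\alpha}$ --- which exhausts $\mp^+ ((0,\infty);\dn)$ as $\psi$ does --- converts the condition into
$$
\sup_{t>0}(W(t))^{\alpha/q}(T_{B/\phi^{\alpha},b}\,\eta)(t) \le C^{\alpha}\sup_{t>0}(V(t))^{\alpha/p}\eta(t),
$$
that is, $\|T_{B/\phi^{\alpha},b}\,\eta\|_{\infty,W^{\alpha/q},\I} \le C^{\alpha}\|\eta\|_{\infty,V^{\alpha/p},\I}$ for all $\eta \in \mp^+ ((0,\infty);\dn)$, which is exactly the asserted equivalence. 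I do not anticipate a genuine obstacle here: the only points requiring attention are the rearrangement identity for the truncated non-increasing function and the bookkeeping of exponents under $\eta = \psi^{\alpha}$, so that the weights $W^{1/q}$ and $V^{1/p}$ turn into $W^{\alpha/q}$ and $V^{\alpha/p}$ as claimed.
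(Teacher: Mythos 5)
Your proposal is correct and follows exactly the paper's route: the paper's proof of Theorem \ref{main.3} is the single line ``the statement follows from Theorem \ref{main.4.1}, (c), when $X=\Lambda^{\alpha}(b)$,'' and you have simply filled in the routine verifications (the lower $r$-estimate via Theorem \ref{KamMal}, the identity $\|\psi\chi_{[0,\tau)}\|_{\overline{\Lambda^{\alpha}(b)}}=(\int_0^{\tau}\psi^{\alpha}b)^{1/\alpha}$ for non-increasing $\psi$, and the substitution $\eta=\psi^{\alpha}$) that the paper leaves implicit. You also correctly read the exponent on the right-hand side of Theorem \ref{main.4.1}(c) as $(V(t))^{1/p}$ rather than the $(V(t))^{1/q}$ printed there, which is evidently a typo.
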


\begin{proof}
The statement follows from Theorem \ref{main.4.1}, (c), when $X =
\Lambda^{\alpha}(b)$.
\end{proof}

\begin{theorem}
    Let $0 < p,q < \infty$, $0 < \alpha \le r < \infty$ and $v,\,w \in \W\I$. Assume that $\phi \in Q_r$ is a quasi-increasing function. Moreover, assume that $b \in \W\I$ is such that $0 < B(t) < \infty$ for all $x > 0$, $B(\infty) = \infty$, $B \in \Delta_2$ and $B(t) / t^{\alpha / r}$ is quasi-increasing. Then $M_{\phi, \Lambda^{\alpha}(b)}$ is bounded from $\Lambda^{p,\infty}(v)$ to $\Lambda^{q,\infty}(w)$ if and only if
    $$
    {\mathcal I} : = \sup_{x>0} \bigg( \int_0^x \frac{b(y)}{V^{\frac{\alpha}{p}}(y)} \,dy\bigg)^{\frac{1}{\alpha}} \frac{W^{\frac{1}{q}}(x)}{\phi(x)} < \infty.
    $$
    Moreover, the best constant $c$ in satisfies $c \ap {\mathcal I}$.
\end{theorem}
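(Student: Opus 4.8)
The plan is to read off the statement from the reduction Theorem~\ref{main.3} combined with the endpoint inequality characterised in Theorem~\ref{Tub.thm.3.1}. By Theorem~\ref{main.3}, $M_{\phi,\Lambda^{\alpha}(b)}$ is bounded from $\Lambda^{p,\infty}(v)$ to $\Lambda^{q,\infty}(w)$ if and only if
\[
\|T_{B/\phi^{\alpha},b}\psi\|_{\infty,W^{\alpha/q},\I}\le C^{\alpha}\|\psi\|_{\infty,V^{\alpha/p},\I}
\]
holds for all $\psi\in\mp^{+}((0,\infty);\dn)$; moreover, unravelling the two-sided estimate of Lemmas~\ref{est.from.above} and \ref{est.from.below} (the upper bound for arbitrary $f$, the matching lower bound realised on $\mf^{\rad,\dn}$, and the fact that $\|\cdot\|_{\Lambda^{p,\infty}(v)}$ depends on $f$ only through $f^{*}$), one sees that the best operator constant $c$ and the best constant $\widetilde{C}$ in the displayed inequality are related by $c^{\alpha}\ap\widetilde{C}$. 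It therefore remains to characterise the displayed inequality, which is precisely of the type treated in Theorem~\ref{Tub.thm.3.1}.

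Next I would apply Theorem~\ref{Tub.thm.3.1} with $u:=B/\phi^{\alpha}$, left-hand weight $W^{\alpha/q}$ and right-hand weight $V^{\alpha/p}$; the weight $b$ is kept, so the primitive occurring inside $T_{u,b}$ stays equal to $B$. Verifying the hypotheses is routine: $u>0$, the weights $W^{\alpha/q}$ and $V^{\alpha/p}$ are continuous, $V^{\alpha/p}$ is non-decreasing, and the local boundedness of $V$ and $W$ makes their primitives finite (if $\phi$ fails to be continuous, replace it at the outset by an equivalent continuous function, which affects nothing below up to constants). Theorem~\ref{Tub.thm.3.1} then yields that the displayed inequality holds if and only if
\[
I:=\sup_{x>0}\left(\int_{0}^{x}\frac{b(y)\,dy}{\esup_{\tau\in(0,y)}V^{\alpha/p}(\tau)}\right)\left[\sup_{0<\tau\le x}W^{\alpha/q}(\tau)\right]\frac{u(x)}{B(x)}<\infty,
\]
and that then $\widetilde{C}\ap I$. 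Since $V$ is non-decreasing and continuous, $\esup_{\tau\in(0,y)}V^{\alpha/p}(\tau)=V^{\alpha/p}(y)$; likewise $\sup_{0<\tau\le x}W^{\alpha/q}(\tau)=W^{\alpha/q}(x)$; and $u(x)/B(x)=1/\phi^{\alpha}(x)$. Hence
\[
I=\sup_{x>0}\left(\int_{0}^{x}\frac{b(y)}{V^{\alpha/p}(y)}\,dy\right)\frac{W^{\alpha/q}(x)}{\phi^{\alpha}(x)}={\mathcal I}^{\alpha},
\]
the last equality because $\alpha>0$ and the expression under the supremum is non-negative. Therefore $I<\infty\iff{\mathcal I}<\infty$, and, collecting the comparisons $c^{\alpha}\ap\widetilde{C}\ap I={\mathcal I}^{\alpha}$, one obtains $c\ap{\mathcal I}$, as asserted.

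I do not expect a genuine obstacle here: the proof is the concatenation of Theorems~\ref{main.3} and \ref{Tub.thm.3.1}, followed by an elementary rewriting of the testing functional. The only points calling for a little care are the bookkeeping of the exponent $\alpha$ in passing among the operator norm, the constant $\widetilde{C}$, and $I$, together with the two monotonicity identities $\esup_{\tau\in(0,y)}V^{\alpha/p}(\tau)=V^{\alpha/p}(y)$ and $\sup_{0<\tau\le x}W^{\alpha/q}(\tau)=W^{\alpha/q}(x)$, which are exactly what put $I$ into the closed form defining ${\mathcal I}$.
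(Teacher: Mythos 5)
Your proposal is correct and follows exactly the paper's route: the paper proves this theorem by citing Theorems \ref{main.3} and \ref{Tub.thm.3.1}, and your substitution $u=B/\phi^{\alpha}$ with weights $W^{\alpha/q}$, $V^{\alpha/p}$ together with the monotonicity simplifications yielding $I={\mathcal I}^{\alpha}$ is precisely the intended computation.
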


\begin{proof}
    The statement follows by Theorems \ref{main.3} and \ref{Tub.thm.3.1}.
\end{proof}

% % % % % % % % % % % % % % % % % % % % % % % % % % % %
% % % % % % % % % % % % % % % % % % % % % % % % % % % %

\begin{bibdiv}
    \begin{biblist}

\bib{arinomuck}{article}{
    author={Ari{\~n}o, M. A.},
    author={Muckenhoupt, B.},
    title={Maximal functions on classical Lorentz spaces and Hardy's
        inequality with weights for nonincreasing functions},
    journal={Trans. Amer. Math. Soc.},
    volume={320},
    date={1990},
    number={2},
    pages={727--735},
    issn={0002-9947},
    review={\MR{989570 (90k:42034)}},
    doi={10.2307/2001699},
}

\bib{basmilruiz}{article}{
    author={Bastero, J.},
    author={Milman, M.},
    author={Ruiz, F. J.},
    title={Rearrangement of Hardy-Littlewood maximal functions in Lorentz
        spaces},
    journal={Proc. Amer. Math. Soc.},
    volume={128},
    date={2000},
    number={1},
    pages={65--74},
    issn={0002-9939},
    review={\MR{1641637 (2000c:42020)}},
    doi={10.1090/S0002-9939-99-05128-X},
}

\bib{bengros}{article}{
    author={Bennett, G.},
    author={Grosse-Erdmann, K.- G.},
    title={Weighted Hardy inequalities for decreasing sequences and
        functions},
    journal={Math. Ann.},
    volume={334},
    date={2006},
    number={3},
    pages={489--531},
    issn={0025-5831},
    review={\MR{2207873 (2006m:26038)}},
    doi={10.1007/s00208-005-0678-7},
}

\bib{benshap1988}{book}{
    author={Bennett, C.},
    author={Sharpley, R.},
    title={Interpolation of operators},
    series={Pure and Applied Mathematics},
    volume={129},
    publisher={Academic Press, Inc., Boston, MA},
    date={1988},
    pages={xiv+469},
    isbn={0-12-088730-4},
    review={\MR{928802 (89e:46001)}},
}

\bib{boyd}{article}{
    author={Boyd, D. W.},
    title={The Hilbert transform on rearrangement-invariant spaces},
    journal={Canad. J. Math.},
    volume={19},
    date={1967},
    pages={599--616},
    issn={0008-414X},
    review={\MR{0212512 (35 \#3383)}},
}

\bib{cgmp2008}{article}{
    author={Carro, M.},
    author={Gogatishvili, A.},
    author={Martin, J.},
    author={Pick, L.},
    title={Weighted inequalities involving two Hardy operators with
        applications to embeddings of function spaces},
    journal={J. Operator Theory},
    volume={59},
    date={2008},
    number={2},
    pages={309--332},
    issn={0379-4024},
    review={\MR{2411048 (2009f:26024)}},
}

\bib{cpss}{article}{
    author={Carro, M.},
    author={Pick, L.},
    author={Soria, J.},
    author={Stepanov, V. D.},
    title={On embeddings between classical Lorentz spaces},
    journal={Math. Inequal. Appl.},
    volume={4},
    date={2001},
    number={3},
    pages={397--428},
    issn={1331-4343},
    review={\MR{1841071 (2002d:46026)}},
    doi={10.7153/mia-04-37},
}

\bib{carrapsor}{article}{
    author={Carro, M. J.},
    author={Raposo, J. A.},
    author={Soria, J.},
    title={Recent developments in the theory of Lorentz spaces and weighted
        inequalities},
    journal={Mem. Amer. Math. Soc.},
    volume={187},
    date={2007},
    number={877},
    pages={xii+128},
    issn={0065-9266},
    review={\MR{2308059 (2008b:42034)}},
    doi={10.1090/memo/0877},
}

\bib{carsorJFA}{article}{
    author={Carro, M. J.},
    author={Soria, J.},
    title={Weighted Lorentz spaces and the Hardy operator},
    journal={J. Funct. Anal.},
    volume={112},
    date={1993},
    number={2},
    pages={480--494},
    issn={0022-1236},
    review={\MR{1213148 (94f:42025)}},
    doi={10.1006/jfan.1993.1042},
}

\bib{carsor1993}{article}{
    author={Carro, M. J.},
    author={Soria, J.},
    title={Boundedness of some integral operators},
    journal={Canad. J. Math.},
    volume={45},
    date={1993},
    number={6},
    pages={1155--1166},
    issn={0008-414X},
    review={\MR{1247539 (95d:47064)}},
    doi={10.4153/CJM-1993-064-2},
}

\bib{chongrice}{book}{
    author={Chong, K. M.},
    author={Rice, N. M.},
    title={Equimeasurable rearrangements of functions},
    note={Queen's Papers in Pure and Applied Mathematics, No. 28},
    publisher={Queen's University, Kingston, Ont.},
    date={1971},
    pages={vi+177},
    review={\MR{0372140 (51 \#8357)}},
}

\bib{ckop}{article}{
    author={Cianchi, A.},
    author={Kerman, R.},
    author={Opic, B.},
    author={Pick, L.},
    title={A sharp rearrangement inequality for the fractional maximal
        operator},
    journal={Studia Math.},
    volume={138},
    date={2000},
    number={3},
    pages={277--284},
    issn={0039-3223},
    review={\MR{1758860 (2001h:42029)}},
}

\bib{cieskam}{article}{
    author={Ciesielski, M.},
    author={Kami{\'n}ska, A.},
    title={Lebesgue's differentiation theorems in R. I. quasi-Banach spaces
        and Lorentz spaces $\Gamma_{p,w}$},
    journal={J. Funct. Spaces Appl.},
    date={2012},
    pages={Art. ID 682960, 28},
    issn={2090-8997},
    review={\MR{2898471}},
    doi={10.1155/2012/682960},
}

\bib{cwikpys}{article}{
    author={Cwikel, M.},
    author={Pustylnik, E.},
    title={Weak type interpolation near ``endpoint'' spaces},
    journal={J. Funct. Anal.},
    volume={171},
    date={2000},
    number={2},
    pages={235--277},
    issn={0022-1236},
    review={\MR{1745635 (2001b:46118)}},
    doi={10.1006/jfan.1999.3502},
}

\bib{dok}{article}{
    author={Doktorskii, R. Ya.},
    title={Reiterative relations of the real interpolation method},
    language={Russian},
    journal={Dokl. Akad. Nauk SSSR},
    volume={321},
    date={1991},
    number={2},
    pages={241--245},
    issn={0002-3264},
    translation={
        journal={Soviet Math. Dokl.},
        volume={44},
        date={1992},
        number={3},
        pages={665--669},
        issn={0197-6788},
    },
    review={\MR{1153547 (93b:46143)}},
}

\bib{edop}{article}{
    author={Edmunds, D. E.},
    author={Opic, B.},
    title={Boundedness of fractional maximal operators between classical and
        weak-type Lorentz spaces},
    journal={Dissertationes Math. (Rozprawy Mat.)},
    volume={410},
    date={2002},
    pages={50},
    issn={0012-3862},
    review={\MR{1952673 (2004c:42040)}},
    doi={10.4064/dm410-0-1},
}

\bib{edop2008}{article}{
    author={Edmunds, D. E.},
    author={Opic, B.},
    title={Alternative characterisations of Lorentz-Karamata spaces},
    journal={Czechoslovak Math. J.},
    volume={58(133)},
    date={2008},
    number={2},
    pages={517--540},
    issn={0011-4642},
    review={\MR{2411107 (2009c:46044)}},
    doi={10.1007/s10587-008-0033-8},
}

\bib{evop}{article}{
    author={Evans, W. D.},
    author={Opic, B.},
    title={Real interpolation with logarithmic functors and reiteration},
    journal={Canad. J. Math.},
    volume={52},
    date={2000},
    number={5},
    pages={920--960},
    issn={0008-414X},
    review={\MR{1782334 (2001i:46115)}},
    doi={10.4153/CJM-2000-039-2},
}

\bib{GR}{book}{
    author={Garcia-Cuerva, J.},
    author={Rubio de Francia, J.L.},
    title={Weighted norm inequalities and related topics},
    series={North-Holland Mathematics Studies},
    volume={116},
    note={Notas de Matem\'atica [Mathematical Notes], 104},
    publisher={North-Holland Publishing Co.},
    place={Amsterdam},
    date={1985},
    pages={x+604},
}

\bib{gjop}{article}{
    author={Gogatishvili, A.},
    author={Johansson, M.},
    author={Okpoti, C. A.},
    author={Persson, L.-E.},
    title={Characterisation of embeddings in Lorentz spaces},
    journal={Bull. Austral. Math. Soc.},
    volume={76},
    date={2007},
    number={1},
    pages={69--92},
    issn={0004-9727},
    review={\MR{2343440 (2008j:46017)}},
    doi={10.1017/S0004972700039484},
}

 \bib{gogpick2000}{article}{
    author={Gogatishvili, A.},
    author={Pick, L.},
    title={Duality principles and reduction theorems},
    journal={Math. Inequal. Appl.},
    volume={3},
    date={2000},
    number={4},
    pages={539--558},
    issn={1331-4343},
    review={\MR{1786395 (2002c:46056)}},
 }

 \bib{gogpick2007}{article}{
    author={Gogatishvili, A.},
    author={Pick, L.},
    title={A reduction theorem for supremum operators},
    journal={J. Comput. Appl. Math.},
    volume={208},
    date={2007},
    number={1},
    pages={270--279},
    issn={0377-0427},
    review={\MR{2347749 (2009a:26013)}},
 }

 \bib{gogstepdokl2012_1}{article}{
    author={Gogatishvili, A.},
    author={Stepanov, V. D.},
    title={Integral operators on cones of monotone functions},
    language={Russian},
    journal={Dokl. Akad. Nauk},
    volume={446},
    date={2012},
    number={4},
    pages={367--370},
    issn={0869-5652},
    translation={
        journal={Dokl. Math.},
        volume={86},
        date={2012},
        number={2},
        pages={650--653},
        issn={1064-5624},
    },
    review={\MR{3053208}},
    doi={10.1134/S1064562412050158},
 }

 \bib{gogstepdokl2012_2}{article}{
    author={Gogatishvili, A.},
    author={Stepanov, V. D.},
    title={Operators are cones of monotone functions},
    language={Russian},
    journal={Dokl. Akad. Nauk},
    volume={445},
    date={2012},
    number={6},
    pages={618--621},
    issn={0869-5652},
    translation={
        journal={Dokl. Math.},
        volume={86},
        date={2012},
        number={1},
        pages={562--565},
        issn={1064-5624},
    },
    review={\MR{3050526}},
 }

 \bib{GogStep1}{article}{
    author={Gogatishvili, A.},
    author={Stepanov, V.D.},
    title={Reduction theorems for operators on the cones of monotone
        functions},
    journal={J. Math. Anal. Appl.},
    volume={405},
    date={2013},
    number={1},
    pages={156--172},
    issn={0022-247X},
    review={\MR{3053495}},
    doi={10.1016/j.jmaa.2013.03.046},
 }

\bib{GogMusPers2}{article}{
    author={Gogatishvili, A.},
    author={Mustafayev, R. Ch.},
    author={Persson, L.-E.},
    title={Some new iterated Hardy-type inequalities: the case $\theta = 1$},
    journal={J. Inequal. Appl.},
    date={2013},
    pages={29 pp.},
    issn={},
    doi={10.1186/1029-242X-2013-515},
}

\bib{GogMusIHI}{article}{
    author={Gogatishvili, A.},
    author={Mustafayev, R. Ch.},
    title={Weighted iterated Hardy-type inequalities},
    journal={Preprint},
    date={2015},
    pages={},
    issn={},
    doi={},
}

\bib{GogMusISI}{article}{
    author={Gogatishvili, A.},
    author={Mustafayev, R. Ch.},
    title={Iterated Hardy-type inequalities involving suprema},
    journal={Preprint},
    date={2015},
    pages={},
    issn={},
    doi={},
}

\bib{gop}{article}{
    author={Gogatishvili, A.},
    author={Opic, B.},
    author={Pick, L.},
    title={Weighted inequalities for Hardy-type operators involving suprema},
    journal={Collect. Math.},
    volume={57},
    date={2006},
    number={3},
    pages={227--255},
    issn={0010-0757},
    review={\MR{2264321 (2007g:26019)}},
}

\bib{gogpick2007}{article}{
    author={Gogatishvili, A.},
    author={Pick, L.},
    title={A reduction theorem for supremum operators},
    journal={J. Comput. Appl. Math.},
    volume={208},
    date={2007},
    number={1},
    pages={270--279},
    issn={0377-0427},
    review={\MR{2347749 (2009a:26013)}},
    doi={10.1016/j.cam.2006.10.048},
}

\bib{GogStep}{article}{
    author={Gogatishvili, A.},
    author={Stepanov, V. D.},
    title={Reduction theorems for weighted integral inequalities on the cone
        of monotone functions},
    language={Russian, with Russian summary},
    journal={Uspekhi Mat. Nauk},
    volume={68},
    date={2013},
    number={4(412)},
    pages={3--68},
    issn={0042-1316},
    translation={
        journal={Russian Math. Surveys},
        volume={68},
        date={2013},
        number={4},
        pages={597--664},
        issn={0036-0279},
    },
    review={\MR{3154814}},
}

 \bib{gold2001}{article}{
    author={Goldman, M. L.},
    title={Sharp estimates for the norms of Hardy-type operators on cones of
        quasimonotone functions},
    language={Russian, with Russian summary},
    journal={Tr. Mat. Inst. Steklova},
    volume={232},
    date={2001},
    number={Funkts. Prostran., Garmon. Anal., Differ. Uravn.},
    pages={115--143},
    issn={0371-9685},
    translation={
        journal={Proc. Steklov Inst. Math.},
        date={2001},
        number={1 (232)},
        pages={109--137},
        issn={0081-5438},
    },
    review={\MR{1851444 (2002m:42019)}},
 }

 \bib{gold2011.1}{article}{
    author={Goldman, M. L.},
    title={Order-sharp estimates for Hardy-type operators on the cones of
        functions with properties of monotonicity},
    journal={Eurasian Math. J.},
    volume={3},
    date={2012},
    number={2},
    pages={53--84},
    issn={2077-9879},
    review={\MR{3024120}},
 }

 \bib{gold2011.2}{article}{
    author={Goldman, M. L.},
    title={Order-sharp estimates for Hardy-type operators on cones of
        quasimonotone functions},
    journal={Eurasian Math. J.},
    volume={2},
    date={2011},
    number={3},
    pages={143--146},
    issn={2077-9879},
    review={\MR{2910846}},
 }

\bib{graf2008}{book}{
    author={Grafakos, L.},
    title={Classical Fourier analysis},
    series={Graduate Texts in Mathematics},
    volume={249},
    edition={2},
    publisher={Springer, New York},
    date={2008},
    pages={xvi+489},
    isbn={978-0-387-09431-1},
    review={\MR{2445437 (2011c:42001)}},
}

\bib{graf}{book}{
    author={Grafakos, L.},
    title={Modern Fourier analysis},
    series={Graduate Texts in Mathematics},
    volume={250},
    edition={2},
    publisher={Springer},
    place={New York},
    date={2009},
    pages={xvi+504},
    isbn={978-0-387-09433-5},
    review={\MR{2463316 (2011d:42001)}},
}

\bib{guz1975}{book}{
   author={de Guzm{\'a}n, M.},
   title={Differentiation of integrals in $R^{n}$},
   series={Lecture Notes in Mathematics, Vol. 481},
   note={With appendices by Antonio C\'ordoba, and Robert Fefferman, and two
    by Roberto Moriy\'on},
   publisher={Springer-Verlag, Berlin-New York},
   date={1975},
   pages={xii+266},
}

\bib{heinstep1993}{article}{
    author={Heinig, H. P.},
    author={Stepanov, V. D.},
    title={Weighted Hardy inequalities for increasing functions},
    journal={Canad. J. Math.},
    volume={45},
    date={1993},
    number={1},
    pages={104--116},
    issn={0008-414X},
    review={\MR{1200323 (93j:26011)}},
    doi={10.4153/CJM-1993-006-3},
}

\bib{johstepush}{article}{
        author={Johansson, M.},
        author={Stepanov, V. D.},
        author={Ushakova, E. P.},
        title={Hardy inequality with three measures on monotone functions},
        journal={Math. Inequal. Appl.},
        volume={11},
        date={2008},
        number={3},
        pages={393--413},
        issn={1331-4343},
        review={\MR{2431205 (2010d:26024)}},
        doi={10.7153/mia-11-30},
       }

\bib{kammal}{article}{
    author={Kami{\'n}ska, A.},
    author={Maligranda, L.},
    title={Order convexity and concavity of Lorentz spaces $\Lambda_{p,w},\ 0<p<\infty$},
    journal={Studia Math.},
    volume={160},
    date={2004},
    number={3},
    pages={267--286},
    issn={0039-3223},
    review={\MR{2033403 (2005e:46047)}},
    doi={10.4064/sm160-3-5},
}

\bib{kerp}{article}{
       author={Kerman, R.},
       author={Pick, L.},
       title={Optimal Sobolev imbeddings},
       journal={Forum Math.},
       volume={18},
       date={2006},
       number={4},
       pages={535--570},
       issn={0933-7741},
       review={\MR{2254384 (2007g:46052)}},
       doi={10.1515/FORUM.2006.028},
    }

\bib{kufpers}{book}{
    author={Kufner, A.},
    author={Persson, L.-E.},
    title={Weighted inequalities of Hardy type},
    publisher={World Scientific Publishing Co., Inc., River Edge, NJ},
    date={2003},
    pages={xviii+357},
    isbn={981-238-195-3},
    review={\MR{1982932 (2004c:42034)}},
    doi={10.1142/5129},
}

\bib{kufmalpers}{book}{
    author={Kufner, A.},
    author={Maligranda, L.},
    author={Persson, L.-E.},
    title={The Hardy inequality},
    note={About its history and some related results},
    publisher={Vydavatelsk\'y Servis, Plze\v n},
    date={2007},
    pages={162},
    isbn={978-80-86843-15-5},
    review={\MR{2351524 (2008j:26001)}},
}

\bib{LaiShanzhong}{article}{
    author={Lai, S.},
    title={Weighted norm inequalities for general operators on monotone
        functions},
    journal={Trans. Amer. Math. Soc.},
    volume={340},
    date={1993},
    number={2},
    pages={811--836},
    issn={0002-9947},
    review={\MR{1132877 (94b:42005)}},
    doi={10.2307/2154678},
}

\bib{leckneug}{article}{
    author={Leckband, M. A.},
    author={Neugebauer, C. J.},
    title={Weighted iterates and variants of the Hardy-Littlewood maximal
        operator},
    journal={Trans. Amer. Math. Soc.},
    volume={279},
    date={1983},
    number={1},
    pages={51--61},
    issn={0002-9947},
    review={\MR{704601 (85c:42021)}},
    doi={10.2307/1999370},
}

\bib{ler2005}{article}{
    author={Lerner, A. K.},
    title={A new approach to rearrangements of maximal operators},
    journal={Bull. London Math. Soc.},
    volume={37},
    date={2005},
    number={5},
    pages={771--777},
    issn={0024-6093},
    review={\MR{2164840 (2006d:42032)}},
    doi={10.1112/S0024609305004698},
}

\bib{lintzaf}{book}{
    author={Lindenstrauss, J.},
    author={Tzafriri, L.},
    title={Classical Banach spaces. II},
    series={Ergebnisse der Mathematik und ihrer Grenzgebiete [Results in
        Mathematics and Related Areas]},
    volume={97},
    note={Function spaces},
    publisher={Springer-Verlag, Berlin-New York},
    date={1979},
    pages={x+243},
    isbn={3-540-08888-1},
    review={\MR{540367 (81c:46001)}},
}

\bib{mastper}{article}{
    author={Masty{\l}o, M.},
    author={P{\'e}rez, C.},
    title={The Hardy-Littlewood maximal type operators between Banach
        function spaces},
    journal={Indiana Univ. Math. J.},
    volume={61},
    date={2012},
    number={3},
    pages={883--900},
    issn={0022-2518},
    review={\MR{3071689}},
    doi={10.1512/iumj.2012.61.4708},
}

\bib{neug1987}{article}{
    author={Neugebauer, C. J.},
    title={Iterations of Hardy-Littlewood maximal functions},
    journal={Proc. Amer. Math. Soc.},
    volume={101},
    date={1987},
    number={2},
    pages={272--276},
    issn={0002-9939},
    review={\MR{902540 (88k:42014)}},
    doi={10.2307/2045994},
}

\bib{o}{article}{
    author={Opic, B.},
    title={On boundedness of fractional maximal operators between classical
        Lorentz spaces},
    conference={
        title={Function spaces, differential operators and nonlinear analysis
        },
        address={Pudasj\"arvi},
        date={1999},
    },
    book={
        publisher={Acad. Sci. Czech Repub., Prague},
    },
    date={2000},
    pages={187--196},
    review={\MR{1755309 (2001g:42043)}},
}

\bib{opickuf}{book}{
    author={Opic, B.},
    author={Kufner, A.},
    title={Hardy-type inequalities},
    series={Pitman Research Notes in Mathematics Series},
    volume={219},
    publisher={Longman Scientific \& Technical, Harlow},
    date={1990},
    pages={xii+333},
    isbn={0-582-05198-3},
    review={\MR{1069756 (92b:26028)}},
}

\bib{OT1}{article}{
    author={Opic, B.},
    author={Trebels, W.},
    title={Bessel potentials with logarithmic components and Sobolev-type
        embeddings},
    language={English, with English and Russian summaries},
    journal={Anal. Math.},
    volume={26},
    date={2000},
    number={4},
    pages={299--319},
    issn={0133-3852},
    review={\MR{1805506 (2002b:46057)}},
    doi={10.1023/A:1005691512014},
}

\bib{OT2}{article}{
    author={Opic, B.},
    author={Trebels, W.},
    title={Sharp embeddings of Bessel potential spaces with logarithmic
        smoothness},
    journal={Math. Proc. Cambridge Philos. Soc.},
    volume={134},
    date={2003},
    number={2},
    pages={347--384},
    issn={0305-0041},
    review={\MR{1972143 (2004c:46057)}},
    doi={10.1017/S0305004102006321},
}

\bib{pick2000}{article}{
    author={Pick, L.},
    title={Supremum operators and optimal Sobolev inequalities},
    conference={
        title={Function spaces, differential operators and nonlinear analysis
        },
        address={Pudasj\"arvi},
        date={1999},
    },
    book={
        publisher={Acad. Sci. Czech Repub., Prague},
    },
    date={2000},
    pages={207--219},
    review={\MR{1755311 (2000m:46075)}},
}

\bib{perez1995}{article}{
    author={P{\'e}rez, C.},
    title={On sufficient conditions for the boundedness of the
        Hardy-Littlewood maximal operator between weighted $L^p$-spaces with
        different weights},
    journal={Proc. London Math. Soc. (3)},
    volume={71},
    date={1995},
    number={1},
    pages={135--157},
    issn={0024-6115},
    review={\MR{1327936 (96k:42023)}},
    doi={10.1112/plms/s3-71.1.135},
}

\bib{pick2002}{article}{
    author={Pick, L.},
    title={Optimal Sobolev embeddings---old and new},
    conference={
        title={Function spaces, interpolation theory and related topics (Lund,
            2000)},
    },
    book={
        publisher={de Gruyter, Berlin},
    },
    date={2002},
    pages={403--411},
    review={\MR{1943297 (2003j:46054)}},
}

\bib{popo}{article}{
    author={Popova, O. V.},
    title={Hardy-type inequalities on cones of monotone functions},
    language={Russian, with Russian summary},
    journal={Sibirsk. Mat. Zh.},
    volume={53},
    date={2012},
    number={1},
    pages={187--204},
    issn={0037-4474},
    translation={
        journal={Sib. Math. J.},
        volume={53},
        date={2012},
        number={1},
        pages={152--167},
        issn={0037-4466},
    },
    review={\MR{2962198}},
    doi={10.1134/S0037446612010132},
}

\bib{pys}{article}{
    author={Pustylnik, E.},
    title={Optimal interpolation in spaces of Lorentz-Zygmund type},
    journal={J. Anal. Math.},
    volume={79},
    date={1999},
    pages={113--157},
    issn={0021-7670},
    review={\MR{1749309 (2001a:46028)}},
    doi={10.1007/BF02788238},
}

\bib{sawyer1990}{article}{
    author={Sawyer, E.},
    title={Boundedness of classical operators on classical Lorentz spaces},
    journal={Studia Math.},
    volume={96},
    date={1990},
    number={2},
    pages={145--158},
    issn={0039-3223},
    review={\MR{1052631 (91d:26026)}},
}

\bib{Sinn}{article}{
    author={Sinnamon, G.},
    title={Transferring monotonicity in weighted norm inequalities},
    journal={Collect. Math.},
    volume={54},
    date={2003},
    number={2},
    pages={181--216},
    issn={0010-0757},
    review={\MR{1995140 (2004m:26031)}},
}

\bib{ss}{article}{
    author={Sinnamon, G.},
    author={Stepanov, V.D.},
    title={The weighted Hardy inequality: new proofs and the case $p=1$},
    journal={J. London Math. Soc. (2)},
    volume={54},
    date={1996},
    number={1},
    pages={89--101},
    issn={0024-6107},
    review={\MR{1395069 (97e:26021)}},
    doi={10.1112/jlms/54.1.89},
}

\bib{sor}{article}{
    author={Soria, J.},
    title={Lorentz spaces of weak-type},
    journal={Quart. J. Math. Oxford Ser. (2)},
    volume={49},
    date={1998},
    number={193},
    pages={93--103},
    issn={0033-5606},
    review={\MR{1617343 (99e:46037)}},
    doi={10.1093/qjmath/49.193.93},
}

\bib{steptrans}{article}{
    author={Stepanov, V. D.},
    title={The weighted Hardy's inequality for nonincreasing functions},
    journal={Trans. Amer. Math. Soc.},
    volume={338},
    date={1993},
    number={1},
    pages={173--186},
    issn={0002-9947},
    review={\MR{1097171 (93j:26012)}},
    doi={10.2307/2154450},
}

\bib{step1993}{article}{
    author={Stepanov, V. D.},
    title={Integral operators on the cone of monotone functions},
    journal={J. London Math. Soc. (2)},
    volume={48},
    date={1993},
    number={3},
    pages={465--487},
    issn={0024-6107},
    review={\MR{1241782 (94m:26025)}},
    doi={10.1112/jlms/s2-48.3.465},
}

\bib{stein1981}{article}{
    author={Stein, E. M.},
    title={Editor's note: the differentiability of functions in ${\bf
            R}^{n}$},
    journal={Ann. of Math. (2)},
    volume={113},
    date={1981},
    number={2},
    pages={383--385},
    issn={0003-486X},
    review={\MR{607898 (84j:35077)}},
}

\bib{stein1970}{book}{
            author={Stein, E.M.},
            title={Singular integrals and differentiability properties of functions},
            series={Princeton Mathematical Series, No. 30},
            publisher={Princeton University Press, Princeton, N.J.},
            date={1970},
            pages={xiv+290},
            review={\MR{0290095 (44 \#7280)}},
        }

\bib{stein1993}{book}{
            author={Stein, E.M.},
            title={Harmonic analysis: real-variable methods, orthogonality, and
                oscillatory integrals},
            series={Princeton Mathematical Series},
            volume={43},
            note={With the assistance of Timothy S. Murphy;
                Monographs in Harmonic Analysis, III},
            publisher={Princeton University Press, Princeton, NJ},
            date={1993},
            pages={xiv+695},
            isbn={0-691-03216-5},
            review={\MR{1232192 (95c:42002)}},
        }

\bib{tor1986}{book}{
            author={Torchinsky, A.},
            title={Real-variable methods in harmonic analysis},
            series={Pure and Applied Mathematics},
            volume={123},
            publisher={Academic Press, Inc., Orlando, FL},
            date={1986},
            pages={xii+462},
            isbn={0-12-695460-7},
            isbn={0-12-695461-5},
            review={\MR{869816 (88e:42001)}},
        }

\end{biblist}
\end{bibdiv}

\end{document}